\setlist[enumerate]{itemsep=0mm}
\newtheorem*{rep@theorem}{\rep@title}
\newcommand{\newreptheorem}[2]{%
\newenvironment{rep#1}[1]{%
 \def\rep@title{#2 \ref{##1}}%
 \begin{rep@theorem}}%
 {\end{rep@theorem}}}
\newtheorem{theorem}{Theorem}[section]
\newtheorem{proposition}[theorem]{Proposition}
\newtheorem{lemma}[theorem]{Lemma}
\newtheorem{defi}[theorem]{Definition}
\newtheorem{conv}[theorem]{Convention}
\newtheorem{rema}[theorem]{Remark}
\newtheorem{remaconv}[theorem]{Remark and Convention}
\newtheorem{exam}[theorem]{Example}
\newenvironment{definition}{\begin{defi}\rm}{\hfill $\lhd$\end{defi}}
\newenvironment{convention}{\begin{conv}\rm}{\end{conv}}
\newenvironment{remark}{\begin{rema}\rm}{\hfill $\lhd$\end{rema}}
\newenvironment{remarkconv}{\begin{remaconv}\rm}{\hfill $\lhd$\end{remaconv}}
\newenvironment{example}{\begin{exam}\rm}{\hfill $\lhd$\end{exam}}
\renewenvironment{proof}{\begin{trivlist}\item[]{\bf
Proof.}}{\hfill {\sc qed}\end{trivlist}}
\newtheorem{claim2}{\sc Claim}
\newenvironment{claim}{\begin{claim2}\rm}{\end{claim2}\rm}
\newenvironment{claimfirst}{\setcounter{claim2}{0}
               \begin{claim2}\rm}{\end{claim2}\rm}
\newenvironment{pfclaim}{\begin{trivlist}\item[]{\sc Proof of
Claim}}{\hfill {\mbox{$\blacktriangleleft$}}\end{trivlist}}
\renewcommand{\b}[1]{\mathbb{#1}}
\newcommand{\heading}[1]{\begin{center}-- {\bf #1} --\end{center}}
\newcommand{\intpart}[1]{\noindent {\bf #1.} }
\newcommand{\proves}{\vdash}
\newcommand{\forces}{\Vdash}
\newcommand{\X}{\mathrm{X}}
\newcommand{\I}{\mathrm{I}}
\newcommand{\EU}{\mathrm{EU}}
\newcommand{\EG}{\mathrm{EG}}
\newcommand{\AF}{\mathrm{AF}}
\newcommand{\AR}{\mathrm{AR}}
\newcommand{\diam}{\dia}
\renewcommand{\a}{\mathrm{a}}
\newcommand{\e}{\mathrm{e}}
\newcommand{\f}{\mathrm{f}}
\newcommand{\chip}{\widetilde{\chi}} 
\newcommand{\FO}{\text{FO}}
\newcommand{\MSO}{\text{MSO}}
\newcommand{\LTL}{\text{LTL}}
\newcommand{\acc}{\mathsf{acc}}
\newcommand{\p}{\overline{p}}
\newcommand{\q}{\overline{q}}
\newcommand{\x}{\overline{x}}
\newcommand{\y}{\overline{y}}
\newcommand{\oa}{\overline{a}}
\newcommand{\ob}{\overline{b}}
\newcommand{\CTL}{\ensuremath{\text{\textup{CTL}}}}        
\newcommand{\CTLf}{\CTL^{f}}               
\newcommand{\CTLfp}{\CTL^{f}_{I}}               
\newcommand{\CTLfpb}{\CTL^{f}_{I,0,1}}               
\newcommand{\dia}{\Diamond}
\newcommand{\hs}{\heartsuit}
\renewcommand{\phi}{\varphi} 
\newcommand{\isbnf}{\mathrel{::=}}
\newcommand{\divbnf}{\mid}
\begin{document}

\setlength{\pdfpageheight}{\paperheight}
\setlength{\pdfpagewidth}{\paperwidth}

\conferenceinfo{CONF 'yy}{Month d--d, 20yy, City, ST, Country}
\copyrightyear{20yy}
\copyrightdata{978-1-nnnn-nnnn-n/yy/mm}
\copyrightdoi{nnnnnnn.nnnnnnn}



\title{Monadic second order logic as the \\ model companion of temporal logic}

\authorinfo{Silvio Ghilardi}
           {Universit\`a degli Studi di Milano}
           {silvio.ghilardi@unimi.it}
\authorinfo{Sam van Gool}
           {City College of New York}
           {samvangool@me.com}

\maketitle

\begin{abstract}
The main focus of this paper is on bisimulation-invariant MSO, and more particularly on giving a novel model-theoretic approach to it.
In model theory, a model companion of a theory is a first-order description of the class of models in which all potentially solvable systems of equations and non-equations have solutions. We show that bisimulation-invariant {\sc MSO} on trees gives the model companion for %
a new temporal logic, ``fair {\sc CTL}'', 
an enrichment of {\sc CTL} with local fairness constraints. To achieve this, we give a completeness proof for the logic fair CTL which combines tableaux and Stone duality, and a fair {\sc CTL} encoding of the automata for the modal $\mu$-calculus. Moreover, we also show that {\sc MSO} on binary trees is the model companion of binary deterministic fair {\sc CTL}.
\end{abstract}


\keywords
modal and temporal logic, monadic second order logic, tree automata, model companions

\section{Introduction}\label{s:introduction}
Our main aim in this paper is to introduce the mathematical concept of model-completeness into the study of MSO, which is fundamental to computer science, and to connect it to temporal tree logic. 
In a slogan, our main thesis is that monadic second order logic `is' the {\it model companion} of temporal logic.

While model-completeness, as many topics in computer science, has its origins in mathematical logic, since the early 2000's this concept has become relevant for computer science. The most important application of model-completeness concerns automated reasoning in first-order logic, in particular, for combining first-order decision procedures in the case of non-disjoint signatures~\cite{JAR}. We plan further applications to conservativity of ontology extensions~\cite{KR}. 

In this introductory section, we give some background and motivation for model companions and we then describe our main contributions in this paper.\\

\intpart{Solving equations and model companions}
Finding solutions to equations is a challenge at the heart of both mathematics and computer science. Model-theoretic algebra, originating with the ground-breaking work of \cite{Rob1951, Robinson}, cast this problem of solving equations in a logical form, and used this setting to solve algebraic problems via model theory. 

The central notion is that of an {\it existentially closed model}, which we explain now. Call a quantifier-free formula\footnote{
	In some contexts, including the ones in this paper, quantifier-free formulas reduce to systems of equations; the notion is then also called {\it algebraically closed}.
} with parameters in a model $M$ {\it solvable} if there is an extension $M'$ of $M$ where the formula is satisfied. A model $M$ is {\it existentially closed} if any solvable quantifier-free formula already has a solution in $M$ itself. For example, the field of real numbers is not existentially closed, but the field of complex numbers is.

Although this definition is formally clear, it has a main drawback: it is not first-order definable in general. However, in fortunate and important cases, the class of existentially closed models of $T$ are exactly the models of another first-order theory $T^*$. In this case, the theory $T^*$ can be characterized abstractly as the {\it model companion} of $T$ (cf. Definition~\ref{d:modelcompanion}).

Thus, the model companion of a theory identifies the class of those models where {\it all satisfiable existential statements can be satisfied}. For example, the theory of algebraically closed fields is the model companion of the theory of fields, and dense linear orders without endpoints give the model companion of linear orders.\\

\intpart{Logic and algebra} The well-known Lindenbaum-Tarski construction shows that classical propositional logic corresponds to the class of Boolean algebras. In the same way, intuitionistic logic corresponds to Heyting algebras, and many modal and temporal logics correspond to classes of Boolean algebras enriched with operators, cf., e.g., \cite{rasiowa-sikorski}.
In this context, an existentially closed algebra corresponds to a propositional theory where `all solvable logic equations actually have a solution'. But do model companions exist in algebraic logic?\\

\intpart{Model companions in algebraic logic} Boolean algebras have a model companion: the theory of atomless Boolean algebras. The first results on model companions in modal logic were negative: the class of existentially closed modal algebras for the basic modal logic $\mathbf{K}$ is not elementary~\cite{Lipparini}. This initially discouraged further investigations in this direction, until the surprising result~\cite{Pitts} that second order intuitionistic propositional calculus can be interpreted in ordinary propositional intuitionistic calculus. As pointed out in~\cite{GZ_APAL}, this result precisely says that the theory of Heyting algebras has a model companion. 
We refer to the book~\cite{GZ} for a more complete picture of the subsequent literature on model companions for modal and intuitionistic logics.

One way to interpret the already cited result that $\mathbf{K}$ does not have a model companion is that the basic modal language is too poor. In order to obtain a first-order setting where `all solvable equations can be solved', we need to enrich the language; to this end, we will add certain \emph{fixpoints} to the modal language.\\

\intpart{Infinite words and {\sc LTL}} 
As a first step, in our forthcoming paper \cite{GvG}, we showed that a class of rooted algebras corresponding to linear temporal logic $\LTL$~\cite{Pnueli} has a model companion. Moreover, this model companion is the theory axiomatized by the sentences which are true in the special $\LTL$-algebra given by the power set of the natural numbers. In more intuitive terms, $\LTL$ has a model companion, and `it is monadic second order logic', viewed here as the first-order theory of a powerset Boolean algebra with operators. An important ingredient for the proof of this result is the fact \cite{Buc1962} that the B\"uchi acceptance condition for automata on infinite words can be converted into an existential formula in linear temporal logic.\\

\intpart{Main contributions of this paper: infinite trees and fair {\sc CTL}} 
In this paper, we exhibit model companions for the much more challenging `branching time' case. 

The most obvious candidate replacement for $\LTL$ is computational tree logic $\CTL$~\cite{Emerson}. This logic, however, turns out not to be sufficiently expressive.
%
%
%
The {\bf first contribution} of this paper is therefore the design of an extension of $\CTL$
(cf. Section~\ref{s:fairCTL}). The choice for this extension, that we call `fair CTL', is dictated by the fact that we want a logic such that bisimulation-invariant MSO is its model companion. For this purpose we need a temporal logic that can express, in a quantifier-free way, the concept of ``successful run'' of a tree automaton. The logic fair CTL seems a `minimal' extension of CTL which is sufficient for this purpose. 

The main change in moving from $\CTL$ to fair $\CTL$ is that we replace the unary $\CTL$ operator $\EG$ by a binary operator. A formula $\EG(\phi, \psi)$, when interpreted in an infinite tree, will mean `there exists a $\psi$-fair branch (i.e. an infinite path on which $\psi$ is true infinitely often) where $\phi$  always holds'.%
\footnote{Although similar in spirit, our `fair $\CTL$' is not the same as `$\CTL$ with fairness constraints' (called $\textup{FCTL}$  in~\cite{EmersonL86}), because in the latter fairness constraints are fixed once and for all as global external constraints, and do not recursively change inside a formula.}
This operator can be characterized as a greatest post-fixpoint of a $\CTL$-formula using the `until' connective $\EU$. The natural candidate axiomatization for fair $\CTL$ therefore consists of suitable fixpoint axioms and rules for these operations. 
In Section~\ref{s:completeness} we prove (Theorem~\ref{t:completeness}) that this candidate axiomatization is in fact complete with respect to the intended models. This result is obtained via a non-trivial tableaux procedure, adapting ideas already introduced to give a partial proof of completeness for the modal $\mu$-calculus in~\cite{Koz1983}, combined with some basic notions and techniques from modal logic and Stone duality.

Using this result, as our {\bf second contribution} we prove (Theorem~\ref{t:main1}) that the class of algebras corresponding to the logic fair $\CTL$ has a model companion. Moreover, this model companion can be axiomatized using the conversion of monadic second order logic into the modal $\mu$-calculus and back to bisimulation invariant monadic second order logic \cite{JanWal1996}. As in the case of linear temporal logic sketched above, a main ingredient is that the acceptance condition of the appropriate class of automata (in this case, $\mu$-automata) is expressible as an existential formula, using the new operators in fair $\CTL$.

For our {\bf third contribution}, we consider {\it binary} fair $\CTL$, i.e., the logic obtained from fair $\CTL$ by adding two deterministic modalities and an axiom saying that the `next' operator $\diam$ is the union of these two. We prove (Theorem~\ref{t:main2}) that the model companion for the class of binary fair $\CTL$-algebras `is' the monadic second order logic S2S; more precisely, it is the first-order theory of the powerset Boolean algebra of the full infinite binary tree.\\

\intpart{Paper outline}
The paper is organized as follows: in Section~\ref{s:fairCTL}, we introduce fair $\CTL$, its syntax, its semantics and some 
variants. Section~\ref{s:completeness} proves completeness theorems by means of suitable tableau constructions, relying on definable contextual connectives.
In Section~\ref{s:modelcompanions}, we 
show our results 
about
existence of model companions and their relationships with monadic second order logic.
Section~\ref{s:conclusions} concludes.
For space reasons, most proofs are omitted; details can be found in the 
appendix to this paper.

%

\section{CTL with fairness constraints}\label{s:fairCTL}
In this section, we introduce the logic `fair CTL', $\CTLf$ for short, which is a variant of the computation tree logic CTL with fairness constraints built in.
\heading{The logic $\CTLf$}
We introduce syntax (Def.~\ref{d:syntax}), semantics (Def.~\ref{d:semantics}), and an axiomatization (Def.~\ref{d:ctlf}) for the temporal logic $\CTLf$.
\begin{definition}(Syntax of $\CTLf$.) \label{d:syntax}
The \emph{basic operation symbols of $\CTLf$} are $0$-ary symbol $\bot$, unary symbols $\neg$ and $\diam$, and binary symbols $\vee$, $\EU$ and $\EG$. We define the following \emph{derived operations}:
\begin{itemize} 
\item $a \wedge b := \neg(\neg a \vee \neg b)$,
\item $\Box a := \neg \diam \neg a$,
\item $\AR(a,b) := \neg \EU(\neg a,\neg b)$, and
\item  $\AF(a,b):= \neg \EG(\neg a, \neg b)$.
\end{itemize}
Let $\p = \{p_1,\dots,p_n\}$ be a finite set of propositional variables. By a \emph{$\CTLf$-formula with variables in $\p$} we mean a term built up inductively by applying operation symbols of $\CTLf$ to propositional variables $p \in \p$. We denote by $\CTLf(\p)$ the set of $\CTLf$-formulas with variables in $\p$.
\end{definition}
$\CTLf$-formulas can be interpreted in \emph{transition systems}, as follows.
\begin{definition}(Semantics of $\CTLf$.) \label{d:semantics}
A \emph{transition system} is a pair $(S,R)$, where $S$ is a set and $R$ is a binary relation on $S$. An \emph{$R$-path} is a (finite or infinite) sequence of nodes $s_i \in S$ such that $s_i{R}s_{i+1}$ for all $i$. Whenever $R$ is clear from the context, we omit it and refer to the transition system as $S$, and to $R$-paths as paths. For $\p$ a set of variables, a \emph{$\p$-colouring} of a transition system $S$ is a function $\sigma : S \to \mathcal{P}(\p)$.

Let $(S,R,\sigma)$ be a $\p$-coloured transition system. The \emph{forcing relation}, $\forces$, between nodes $s \in S$ and formulas $\phi \in \CTLf(\p)$ is inductively defined as follows:
\begin{itemize}
\item $s \not\forces \bot$,
\item $s \forces p$ iff $p \in \sigma(s)$,
\item $s \forces \neg \psi$ iff $s \not\forces \psi$,
\item $s \forces \psi_1 \vee \psi_2$ iff $s \forces \psi_1$ or $s \forces \psi_2$,
\item $s \forces \diam \psi$ iff there exists $s' \in S$ such that $s{R}s'$ and $s' \forces \psi$,
\item $s \forces \EU(\psi_1,\psi_2)$ iff there exist $n \geq 0$ and an $R$-path $s = s_0,\dots,s_n$ such that $s_t \forces \psi_2$ for all $t < n$ and $s_n \forces \psi_1$.
\item $s \forces \EG(\psi_1,\psi_2)$ iff there exists an infinite $R$-path $s = s_0,s_1,\dots$ such that $s_t \forces \psi_1$ for all $t$ and there exist infinitely many $t$ with $s_t \forces \psi_2$.
\vspace{-5mm}
\end{itemize}
\end{definition}
\begin{remark}
For the derived operations, $\Box$, $\AR$ and $\AF$, we have,
\begin{itemize}
\item $s \forces \Box \psi$ iff for all $s' \in S$ such that $s{R}s'$, $s' \forces \psi$,
\item $s \forces \AR(\psi_1,\psi_2)$ iff for all $n \geq 0$ and all $R$-paths $s = s_0,\dots,s_n$, either $s_t \forces \psi_2$ for some $t < n$, or $s_n \forces \psi_1$.
\item $s \forces \AF(\psi_1,\psi_2)$ iff for all infinite $R$-paths $s = s_0,s_1,\dots$ such that there exist infinitely many $t$ with $s_t \not\forces \psi_2$, there exists $t$ such that $s_t \forces \psi_1$.\vspace{-5mm}
\end{itemize}
\end{remark}

\begin{convention}
We henceforth assume that \emph{all transition systems are serial}, i.e., for every $s \in S$, there exists $s' \in S$ such that $s{R}s'$; equivalently, $\diam\top$ is forced in all nodes.
\end{convention}
In order to axiomatize our logic, we now introduce the quasi-equational theory $\CTLf$.
\begin{definition}\label{d:ctlf}
The \emph{quasi-equational theory $\CTLf$} is axiomatized by the following finite set of quasi-equations\footnote{Here, and in what follows, we use the usual notation that `$a \leq b$' abbreviates `$a \vee b = b$'.}:
\begin{enumerate}
\item[(i)] Boolean algebra axioms for $\bot,\neg,\vee$,
\item[(ii)] (Axioms $\mathbf{K}$) $\diam \bot = \bot$,~~$\forall a, b : \diam(a \vee b) = \diam a \vee \diam b$,
\item[(iii)] (Axiom $\mathbf{D}$) $\diam \top = \top$,
\item[(iv)] (Fixpoint axioms) $\forall a, b, c : $
\begin{align}
&a \vee (b \wedge \diam \EU(a,b)) \leq \EU(a,b) %
\tag{$\EU_{\mathrm{fix}}$}, %
\label{EUfix}\\
&[a \vee (b \wedge \diam c) \leq c] \quad \to \quad [\EU(a,b) \leq c] %
\tag{$\EU_{\mathrm{min}}$}, %
\label{EUmin}\\
&\EG(a,b) \leq a \wedge \diam \EU(b \wedge \EG(a,b),a) %
\tag{$\EG_{\mathrm{fix}}$}, %
\label{EGfix} \\
&[c \leq a \wedge \diam \EU(b \wedge c,a)] \quad \to \quad [c \leq \EG(a,b)] %
\tag{$\EG_{\mathrm{max}}$}. %
\label{EGmax}
\end{align}
\end{enumerate}
\end{definition}
The models of the quasi-equational theory $\CTLf$ will be called $\CTLf$-algebras; we explicitly record the definition here.
\begin{definition}
\label{d:ctlfalgebra}
A \emph{$\CTLf$-algebra} is a tuple $$\b{A} = (A,\bot,\vee,\neg,\diam,\EU,\EG)$$ such that 
\begin{enumerate}
\item[(i)] the reduct $(A,\bot,\vee,\neg)$ is a Boolean algebra;
\item[(ii)] $\diam : A \to A$ is a unary operation that preserves finite joins, including the empty join, $\bot$;
\item[(iii)] $\diam \top = \top$;
\item[(iv)] $\EU$ and $\EG$ are binary operations on $A$ such that, for any $a, b \in A$,
\begin{itemize}
\item $\EU(a,b)$ is the least pre-fixpoint of the function $x \mapsto a \vee (b \wedge \diam x)$, and 
\item $\EG(a,b)$ is the greatest post-fixpoint of the function $y \mapsto a \wedge \diam \EU(b \wedge y, a)$.\vspace{-5mm}
\end{itemize}
\end{enumerate}
\end{definition}
This quasi-equational theory $\CTLf$ and its associated class of $\CTLf$-algebras can be used to define a modal logic, in the following standard way.
\begin{definition}
Let $\p = \{p_1,\dots,p_n\}$ be a finite set of propositional variables. A \emph{valuation} of $\p$ in a $\CTLf$-algebra $\b{A}$ is a function $V: \p \to A$.
For any $\CTLf$-formula $\phi(\p)$ and valuation $V$ in a $\CTLf$-algebra $\b{A}$, we write $\phi^\b{A}(V(\p))$ for the \emph{interpretation} of $\phi$ in the $\CTLf$-algebra $\b{A}$ under the valuation $V$.

An equation $\phi(\p) = \psi(\p)$ of $\CTLf$-formulas is called \emph{valid} if, and only if, it interprets to a true statement under any valuation of the propositional variables $\p$ in any $\CTLf$-algebra. Two $\CTLf$-formulas are \emph{equivalent} if the equation $\phi = \psi$ is valid. A $\CTLf$-formula $\phi$ is called a \emph{tautology} if $\phi = \top$ is a valid equation, and $\emph{consistent}$ if $\phi = \bot$ is not a valid equation;
a $\CTLf$-formula $\phi$ is said to \emph{entail} a formula $\psi$ (written  $\phi\vdash \psi$ or $\phi\leq \psi$) iff the formula $\neg \phi \vee \psi$ is a tautology. 
\end{definition}
Notice that, for the derived operations $\AR$ and $\AF$ (Def.~\ref{d:syntax}), we have 
\begin{itemize}
\item $\AR(a,b) = \max \{c \in A \ | \ c \leq a \wedge (b \vee \Box c)\}$,
\item $\AF(a,b) = \min \{c \in A \ | \ a \vee \Box\AR(b \vee c, a) \leq c\}$,
\end{itemize}
i.e., the following fixpoint rules hold for $\AR$ and $\AF$:
\begin{align}
&\AR(a,b) \leq a \wedge (b \vee \Box \AR(a,b)) \tag{$\AR_{\mathrm{fix}}$} \label{ARfix} \\
&[c \leq a \wedge (b \vee \Box c)] \quad \to \quad [c \leq \AR(a,b)] \tag{$\AR_{\mathrm{max}}$} \label{ARmax} \\
&a \vee \Box \AR(b \vee \AF(a,b),a) \leq \AF(a,b) \tag{$\AF_{\mathrm{fix}}$} \label{AFfix} \\
&[a \vee \Box \AR(b \vee c,a) \leq c] \quad \to \quad [\AF(a,b) \leq c]. \tag{$\AF_{\mathrm{min}}$} \label{AFmin}
\end{align}

\begin{remark}
A \emph{modal algebra} is a tuple $(A,\bot,\vee,\wedge,\neg,\diam)$ for which (i) and (ii) in Definition~\ref{d:ctlfalgebra} hold. The requirement in Definition~\ref{d:ctlfalgebra}(iii) that $\diam\top = \top$ says that $\b{A}$ is in fact an algebra for the modal logic $\mathbf{KD}$. The operations $\EU$ and $\EG$ of a $\CTLf$-algebra $\b A$ are uniquely determined by its modal algebra reduct. However, the operations $\EU$ and $\EG$ do not exist in every modal algebra.
\end{remark}

\heading{Semantics via $\CTLf$-algebras}
The following example and proposition connect the semantics of $\CTLf$ introduced in Def.~\ref{d:semantics} with the definition of $\CTLf$-algebras in Def.~\ref{d:ctlfalgebra}.
\begin{example}
The \emph{complex algebra} of a transition system $(S,R)$ is the tuple 
$$\b{P}(S) = (\mathcal{P}(S), \emptyset, \cup, S \setminus (-), \diam_R, \EU_R, \EG_R),$$
where
$(\mathcal{P}(S), \emptyset, \cup, S \setminus (-))$ is the Boolean power set algebra of the set $S$, \[ \diam_R(a) := R^{-1}[a] = \{s \in S \ | \ \text{ there exists } t \in a \text{ such that } s{R}t\},\]
and $\EU_R$ and $\EG_R$ are the unique binary operations making $\b{P}(S)$ into a $\CTLf$-algebra. (Indeed, such operations exist because $\mathcal{P}(S)$ is a complete lattice.)
\end{example}
Notice that $\p$-colourings $\sigma : S \to \mathcal{P}(\p)$ correspond bijectively to valuations $V : \p \to \mathcal{P}(S)$: given $\sigma$, we define $V_\sigma(p) := \{s \in S \ | \ p \in \sigma(s)\}$ for each $p$ in $\p$, and conversely, given $V$, we define $\sigma_V(v) := \{p \in \p \ | \ v \in V(p)\}$.
\begin{proposition}\label{p:compalg}
Let $(S,R,\sigma)$ be a $\p$-coloured transition system. For any $\CTLf(\p)$-formula $\phi$ and $s \in S$, we have
\[ s \forces \phi \iff s \in \phi^{\b{P}(S)}(V_\sigma(\p)).\]
\end{proposition}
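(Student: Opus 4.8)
The plan is to prove the statement by structural induction on the $\CTLf(\p)$-formula $\phi$, showing at each stage that the forcing set $\{s \in S : s \forces \phi\}$ coincides with the interpretation $\phi^{\b{P}(S)}(V_\sigma(\p))$. Writing $\sem{\phi} := \phi^{\b{P}(S)}(V_\sigma(\p))$ for brevity, the base cases are immediate: $\sem{\bot} = \emptyset$ matches $s \not\forces \bot$, and $\sem{p} = V_\sigma(p) = \{s : p \in \sigma(s)\}$ matches $s \forces p$ iff $p \in \sigma(s)$. The Boolean cases $\neg$ and $\vee$ follow because complementation and union in $\b{P}(S)$ mirror the clauses in Definition~\ref{d:semantics}, and the modal case $\diam\psi$ follows from the definition of $\diam_R$ as $R^{-1}[-]$ together with the inductive hypothesis $\sem{\psi} = \{s : s \forces \psi\}$. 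The only real work is in the two fixpoint cases $\EU$ and $\EG$.

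For $\EU(\psi_1,\psi_2)$, set $a := \sem{\psi_1}$ and $b := \sem{\psi_2}$ (available by the inductive hypothesis), so that by Definition~\ref{d:ctlfalgebra}(iv) the interpretation $\sem{\EU(\psi_1,\psi_2)}$ is the least pre-fixpoint of $F(X) = a \cup (b \cap \diam_R(X))$ in $\mathcal{P}(S)$. I would let $U := \{s : s \forces \EU(\psi_1,\psi_2)\}$ be the forcing set and prove $U = \EU_R(a,b)$ by showing $U$ is the least pre-fixpoint of $F$. That $U$ is a pre-fixpoint ($F(U) \subseteq U$) is direct: a node in $a$ is witnessed by the length-zero path, and a node in $b$ with an $R$-successor in $U$ is witnessed by prepending that node to the successor's path. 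For minimality, given any pre-fixpoint $X$ I would show $U \subseteq X$ by induction on the length of the witnessing path, using $a \subseteq X$ for the base case and $b \cap \diam_R(X) \subseteq X$ for the induction step.

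The case of $\EG(\psi_1,\psi_2)$ is the main obstacle. Again set $a := \sem{\psi_1}$, $b := \sem{\psi_2}$, so $\sem{\EG(\psi_1,\psi_2)}$ is by definition the greatest post-fixpoint of $H(Y) = a \cap \diam_R(\EU_R(b \cap Y, a))$; unfolding the inner $\EU_R$ shows that $s \in H(Y)$ iff $s \in a$ and there is a nonempty $R$-path from $s$ that stays in $a$ until it reaches a node in $b \cap Y$. I would let $G := \{s : s \forces \EG(\psi_1,\psi_2)\}$ and prove $G$ is the greatest post-fixpoint of $H$. For $G \subseteq H(G)$, given an infinite $a$-path through $s$ with infinitely many visits to $b$, I take the first visit to $b$ at a positive index; its tail is again an infinite $a$-path with infinitely many $b$-visits, placing that node in $b \cap G$ and hence $s$ in $H(G)$. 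The more delicate direction is that $G$ is \emph{greatest}: given any post-fixpoint $Y \subseteq H(Y)$ and $s \in Y$, I would iterate the defining property of $H$ to splice together infinitely many finite $a$-segments, each ending at a $b \cap Y$ node, into a single infinite path. The key points making this work are that each segment has length at least one (so the path is genuinely infinite and accumulates infinitely many $b$-visits) and that $H(Y) \subseteq a$ forces $Y \subseteq a$, so every splice point, and hence the whole path, lies in $a$. This yields $s \in G$, so $Y \subseteq G$, completing the identification $G = \EG_R(a,b)$ and with it the induction.
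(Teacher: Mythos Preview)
Your proposal is correct and follows essentially the same argument as the paper: structural induction on $\phi$, with the only substantive work being to verify that the path-based forcing sets for $\EU$ and $\EG$ coincide with the least pre-fixpoint and greatest post-fixpoint in $\mathcal{P}(S)$, via the same pre-fixpoint/post-fixpoint verifications and path-splicing argument you describe. The only cosmetic difference is that the paper factors the $\EU$/$\EG$ verification into a separate preliminary lemma before running the induction, whereas you do it inline.
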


\heading{Adding roots and binary determinism}
For later use, we define two expansions of the system $\CTLf$, $\CTLfp$ and $\CTLfpb$. For $\CTLfp$, we add one basic constant, $\I$, whose intended interpretation is to be true in exactly one `root' node in the transition system. For $\CTLfpb$, we add two additional basic operations, $\X_0$ and $\X_1$, whose intended interpretations are a deterministic `step left' and `step right' in the transition system.
\begin{definition}(Syntax of $\CTLfp$ and $\CTLfpb$.)\label{d:syntax2}
Let $\p$ be a set of proposition letters. We define the set $\CTLfp(\p)$ of \emph{rooted $\CTLf$-formulas} by adding one basic nullary operation $\I$ to $\CTLf$. 
We define the set $\CTLfpb(\p)$ of \emph{rooted binary $\CTLf$-formulas} by adding two basic unary operations, $\X_0$ and $\X_1$, to $\CTLfp(\p)$.
\end{definition}

\begin{definition}(Semantics of $\CTLfp$ and $\CTLfpb$.)\label{d:semantics2}
A node $s_0$ in a transition system $(S,R)$ is called a \emph{root} if for every $s \in S$ there is a path from $s_0$ to $s$, and there is no path ending in $s_0$ except for the trivial path consisting of only $s_0$. A transition system is called \emph{rooted} if it has a (necessarily unique) root.

If $(S,R)$ is a transition system with root $s_0$, we extend the definition of the forcing relation of $\CTLf(\p)$ (Def.~\ref{d:semantics}) to $\CTLfp(\p)$ by defining the additional base case
\begin{itemize}
\item $s \forces \I$ iff $s = s_0$.
\end{itemize}

A \emph{binary transition system} is a tuple $(S,R,f_0,f_1)$ such that $(S,R)$ is a transition system, and $f_0, f_1 : S \to S$ are unary functions such that $R = f_0 \cup f_1$. If $(S,R,f_0,f_1)$ is a rooted binary transition system, we extend the definition of the forcing relation of $\CTLfp(\p)$ to $\CTLfpb(\p)$ by defining, for $i = 0,1$,
\begin{itemize}
\item $s \forces \X_i \phi$ iff $f_i(s) \forces \phi$.
\end{itemize}
\vspace{-5mm}
\end{definition}
We now axiomatize the additional operations $\I$, $\X_0$ and $\X_1$, as follows.
\begin{definition}\label{d:ctlfp}
The \emph{universal theory} $\CTLfp$ is obtained by adding to the theory $\CTLf$ (Def.~\ref{d:ctlf}) the sentences
\begin{enumerate}
\item[(v)] (Axioms for $\I$)
\begin{itemize}
\item $\I \neq \bot$,
\item $\diam \EU(\I,\top) = \bot$,
\item $\forall a : [a \neq \bot] \to [\I \leq \EU(a,\top)]$.
\end{itemize}
\end{enumerate}
Models of $\CTLfp$ are called \emph{rooted $\CTLf$-algebras}; concretely, these are pairs $(\b{A},\I)$ where $\b{A}$ is a $\CTLf$-algebra and $\I \in A$ satisfies the axioms in (v). 

The \emph{universal theory} $\CTLfpb$ is obtained by adding to the theory $\CTLfp$ the sentences
\begin{enumerate}
\item[(vi)] (Axioms for $\X_0, \X_1$) 
\begin{itemize}
\item $\diam a = \X_0 a \vee \X_1 a$,
\end{itemize}
and, for $i = 0,1$:
\begin{itemize}
\item $\X_i \bot = \bot$, $\forall a, b : \X_i (a \vee b) = \X_i a \vee \X_i b$, 
\item $\X_i \neg a = \neg \X_i a$.
\end{itemize}
\end{enumerate}
Models of $\CTLfpb$ are called \emph{binary rooted $\CTLf$-algebras}; concretely, these are tuples $(\b{A},\I,\X_0,\X_1)$ where $(\b{A},I)$ is a rooted $\CTLf$-algebra and $\X_0, \X_1$ are unary operations on $A$ satisfying the axioms in (vi).
\end{definition}
The \emph{complex algebra} of a rooted transition system $(S,R)$ with root $s_0$ is obtained by expanding the complex algebra $\b{P}(S)$ of the transition system with the constant $\I := \{s_0\}$. 
The \emph{complex algebra} of a rooted binary transition system is obtained by further expanding this algebra with unary operations $\X_0$ and $\X_1$ defined, for $i = 0,1$ and $a \in \mathcal{P}(S)$, by
\[ \X_i a := f^{-1}(a) = \{s \in S \ | \ f(s) \in a\}.\]
Note that the analogue of Proposition~\ref{p:compalg} holds for $\CTLfp$ and $\CTLfpb$.

\begin{example}\label{ex:binarytree}
Let $S$ be the set of finite sequences of $0$'s and $1$'s, i.e., $S := 2^*$. For $i \in \{0,1\}$, let $f_i(w) := wi$, the sequence obtained by appending the symbol $i$ to the end, and let $R := f_0 \cup f_1$ be the `child' relation. Then $(S,R,f_0,f_1)$ is a rooted binary transition system, called the \emph{full binary tree}, with root the empty sequence $\epsilon$.
\end{example}

\section{Completeness}\label{s:completeness}

In this section we prove that our axiomatization of $\CTLf$ is \emph{complete} with respect to tree-shaped transition systems. Algebraically, this will mean that complex algebras of such transition systems generate the whole quasi-variety of 
$\CTLf$-algebras; a result that will be used several times to establish our main results in Section~\ref{s:modelcompanions}. The key theorem in this section, Thm.~\ref{t:completeness} below, shows that 
every consistent $\CTLf$-formula can be satisfied in a tree-shaped transition system. 

This result, and its variants for rooted and binary $\CTLf$-algebras, require a rather technical and laborious tableau construction. Readers who are only interested in the bigger picture may skip details in this section; the statements of Theorems~\ref{t:completeness}, \ref{t:rootedcompleteness} and \ref{t:binarycompleteness} are sufficient for continuing.

We first recall the definition and fix notation for trees.
\begin{definition}
A \emph{tree} is a rooted transition system $(S,R)$ such that for every $s$ in $S$, there is a \emph{unique} path from the root to $s$. A tree naturally comes with a \emph{partial order} $\preceq$, which is defined as the reflexive transitive closure of $R$, and has the property that $v \preceq v'$ iff $v$ lies on the unique path from the root to $v'$.
\end{definition}
As with transition systems, we will often suppress the notation of the transition relation $R$, and simply speak of a tree $S$. We are mostly concerned with infinite trees, and we will always specify it explicitly if a tree is finite. As with transition systems, if we only say `tree', then the tree is assumed to be serial, hence infinite.
We will prove the following theorem.
\begin{theorem}\label{t:completeness}
For every consistent $\CTLf$-formula $\phi_0(\p)$, there exists a $\p$-coloured tree with root $s_0$ such that $s_0 \forces \phi_0$.
\end{theorem}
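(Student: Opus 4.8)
The plan is to prove completeness via a tableau/model construction: given a consistent $\CTLf$-formula $\phi_0$, I would build a tree-shaped model satisfying it by unravelling a suitable finite structure extracted from the Lindenbaum-Tarski algebra. First I would fix the finite set $\p$ of variables and work with the finite set of subformulas of $\phi_0$ (or, to obtain a finitely-generated Boolean algebra, the Fischer-Ladner closure under the fixpoint unfoldings dictated by the axioms $\eqref{EUfix}$ and $\eqref{EGfix}$). Consistency of $\phi_0$ means $\phi_0 = \bot$ is not derivable, so there is some $\CTLf$-algebra $\b A$ and valuation $V$ with $\phi_0^{\b A}(V(\p)) \neq \bot$; equivalently, there is an ultrafilter (a maximal consistent set of formulas) containing $\phi_0$. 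The nodes of the tableau would be such maximal consistent sets, with an edge from $u$ to $w$ whenever $\{\psi : \diam\psi \in u\} \subseteq w$ (the canonical $R$-relation for the $\mathbf{KD}$-modal reduct). Seriality is guaranteed by Axiom $\mathbf D$, so every node has at least one successor, and I would then unravel this canonical graph from the node $s_0 \supseteq \{\phi_0\}$ into a tree.

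The genuinely hard part is not the local propositional/modal consistency but the \emph{eventuality} conditions imposed by the fixpoint operators, and this is where the tableau must do real work. The least-fixpoint operator $\EU$ carries an obligation: if $\EU(a,b)$ is in a node, the naive canonical unravelling might postpone the promised $a$-witness forever along an infinite $b$-branch, which would falsify the \emph{inductive} semantics of $\EU$ (finite path in Def.~\ref{d:semantics}). Dually, if $\AR(a,b) = \neg\EU(\neg a,\neg b)$ is forced, one must ensure it really holds along all paths. So after building the canonical tree I would have to \emph{discharge} each $\EU$-eventuality in finitely many steps. I expect to use a well-founded measure — a rank assigned to $\EU$-formulas via $\eqref{EUmin}$ — to schedule witnesses so that every $\EU(a,b)$ occurring in a node is fulfilled after a bounded detour; this is precisely the technique the introduction attributes to Kozen~\cite{Koz1983} for the $\mu$-calculus. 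The corresponding construction for $\EG(a,b)$ is the mirror image: I must produce an \emph{infinite} path on which $a$ holds throughout and $b$ holds infinitely often. Here the greatest-post-fixpoint characterization $\eqref{EGfix}$ is the lever: unfolding $\EG(a,b) \leq a \wedge \diam\EU(b\wedge\EG(a,b),a)$ repeatedly lets me follow $a$-nodes while the embedded $\EU$ forces a fresh $b$-witness to appear again and again, so a fair branch can be built by interleaving these unfoldings in a fair (round-robin) schedule.

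Concretely, the steps I would carry out are: (1) pass to the Lindenbaum-Tarski $\CTLf$-algebra and the canonical frame of ultrafilters, using Stone duality to turn the consistent formula $\phi_0$ into a point $s_0$ of the dual space where $\phi_0$ is forced; (2) verify the truth lemma for the Boolean and $\diam$-connectives, which is standard for $\mathbf{KD}$; (3) set up a tableau that simultaneously tracks, at each node, the set of pending $\EU$-obligations together with a decreasing ordinal/integer rank, and interleaves $\EG$-unfoldings fairly; (4) prove, by induction on the rank, that every pending $\EU$-obligation is met along some finite descendant path, establishing the truth lemma for $\EU$ (and hence for $\AR$ by duality); and (5) prove, by the fair-scheduling argument, that each $\EG$-requirement is witnessed by an infinite fair branch, and dually that $\AF$ holds where required. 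Finally I would unravel the resulting structure into a tree, which preserves the forcing relation since bisimilar points force the same $\CTLf$-formulas, and observe that $s_0 \forces \phi_0$ as desired. The main obstacle I anticipate is step (3)–(5): correctly combining the \emph{least}-fixpoint eventualities of $\EU$ (which must be discharged in finite time) with the \emph{greatest}-fixpoint fairness of $\EG$ (which demands an infinite witness) inside a single tableau, without the two scheduling requirements interfering — this is exactly the delicate point that makes $\mu$-calculus-style completeness proofs hard, and where the binary/nested structure of the fixpoint axioms $\eqref{EGfix}$ and $\eqref{EGmax}$ must be exploited carefully.
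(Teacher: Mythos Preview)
Your overall architecture is right --- witness consistency in a $\CTLf$-algebra $\b{A}$, work with ultrafilters of $\b{A}$, build a tree by tableau-style unravelling, and discharge eventualities --- and you correctly locate the hard step. But the device you propose for that hard step does not exist, and without it steps (3)--(4) cannot be carried out.

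The problematic phrase is ``a well-founded measure --- a rank assigned to $\EU$-formulas via $\eqref{EUmin}$''. In an arbitrary $\CTLf$-algebra there is no such rank: the rule $\eqref{EUmin}$ only says $\EU(a,b)$ is the least pre-fixpoint, not that it is reached by a transfinite iteration you could count down along. The Kozen technique the paper actually uses is different. One introduces \emph{ternary} contextual versions $\EU_c(p,q,r)$ and $\AF_c(p,q,r)$ (least pre-fixpoints of $x \mapsto p \vee (q \wedge \diam(r \wedge x))$ and $x \mapsto p \vee \Box\AR(q \vee (r \wedge x),p)$), proves they are still $\CTLf$-term-definable, and establishes the \emph{context rule}: if $\gamma \wedge \hs(p,q,r) \neq \bot$ then $\gamma \wedge \hs(p,q,r \wedge \neg\gamma) \neq \bot$. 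This is what replaces your rank. In the tableau, when an eventuality is active at a node $v$, one does \emph{not} take $R_*$-successors of $\alpha(v)$; one first \emph{jumps} to an ultrafilter $x_v$ of the same type with respect to a finite relevance set $\rho$, but containing the strengthened formula $\hs(\phi,\psi,\chip \wedge \neg\kappa(\alpha(v),\rho))$, and then takes $R_*$-successors of $x_v$. The context $\chip$ thus accumulates negated characteristic formulas, so no two active nodes for that eventuality along a branch can share a $\rho$-type. Since there are only $2^{|\rho|}$ types, a pigeonhole argument gives termination. That combinatorial finiteness is the real well-foundedness; your sketch has nothing to put in its place.

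You also underestimate $\AF$. It is not handled ``by duality'': $\AF$ is the De Morgan dual of $\EG$ and hence itself a \emph{least}-fixpoint eventuality, requiring its own discharge mechanism alongside $\EU$. The paper tracks eventualities with a three-valued status (active / frozen / extinguished), where ``frozen'' is specific to $\AF$: an $\AF(\phi,\psi,\chi)$ is frozen while $\psi$ holds, since the branch may turn out not to be $\neg\psi$-fair, making $\AF$ vacuously true. The no-loop lemma then shows that along any branch every eventuality is eventually extinguished or (for $\AF$) ultimately frozen. By contrast, $\EG$ is the \emph{easy} case in the truth lemma --- one simply follows the $\diam\EU(\psi \wedge \EG(\phi,\psi),\phi)$ successors --- so your emphasis on fair scheduling for $\EG$ is misplaced.
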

In the rest of this section, we fix a consistent $\CTLf$-formula $\phi_0(\p)$. Since $\phi_0$ is consistent, we may also fix a $\CTLf$-algebra $\b{A}$ and an interpretation $V : \p \to \b{A}$ of proposition letters such that $\phi_0^{\b{A}}(V(\p)) \neq \bot$. We will use these data to construct a $\p$-coloured tree $S$, for which we will prove that $\phi_0$ holds in the root.

\begin{convention}\label{c:phi}
Since $\b{A}$, $\p$ and $V$ are fixed throughout the rest of this section, we will usually omit reference to them. In particular, if $\phi(\p)$ is a $\CTLf$-formula, then we will denote the interpretation of $\phi$ in $\b{A}$ under $V$ by $\phi$ as well, where we should actually write $\phi^{\b{A}}(V(\p))$ for that element.
\end{convention}

The proof of Theorem~\ref{t:completeness} will be structured as follows. In Subsection~\ref{ss:context}, we introduce a crucial syntactic tool that we call {\it contextual operations}. In Subsection~\ref{ss:preliminaries}, we then recall several other more standard preliminary notions that play a role in the proof: negation normal form, representation of modal algebras, Fischer-Ladner closure, and types. The heart of the construction of the $\p$-coloured tree $S$ is in Subsection~\ref{ss:model}, where we inductively construct the tree as a union of partial tableaux. 

In Subsection~\ref{ss:compvariants}, we will state the analogous completeness theorems for the variants $\CTLfp$ and $\CTLfpb$.

\subsection{Contextual operations and rules}\label{ss:context}
The following syntactic definition is crucial to the completeness proof. The meaning of these operations will be clarified in the rest of this subsection.
\begin{definition}\label{d:EUcAFc}
We introduce the following ternary operations $\EU_c$ (`contextual $\EU$') and $\AF_c$ (`contextual $\AF$') as abbreviations of term operations in $\CTLf$:
\begin{itemize}
\item $\EU_c(p,q,r) := p \vee (q \wedge \diam \EU(p \wedge r,q \wedge r))$,
\item $\AF_c(p,q,r) := \AF(p,q) \wedge (p \vee \Box \AR(q\vee r,p))$.
\end{itemize}
\vspace{-5mm}
\end{definition}
In Proposition~\ref{p:EUcAFcfixpoint}, we will show that $\EU_c$ and $\AF_c$ can be characterized as least fixpoints of operators very similar to those for $\EU$ and $\AF$ (cf. Def~\ref{d:ctlf} and further). The only difference is that, in the contextual versions of $\EU$ and $\AF$, the proposition in the third coordinate is added conjunctively to the fixpoint variable. The third coordinate may therefore be thought of as a `context', hence the name. 
This idea (although not under this name) originates with the partial completeness proof for the modal $\mu$-calculus in \cite{Koz1983}. The additional piece of information that we prove here is that the contextual versions of $\EU$ and $\AF$ are themselves still expressible in $\CTLf$, which is of course only a fragment of the full modal $\mu$-calculus.
\begin{proposition}\label{p:EUcAFcfixpoint}
For any elements $p,q,r$ of a $\CTLf$-algebra $\b A$, we have:
\begin{enumerate}
\item $\EU_c(p,q,r)$ is the least pre-fixpoint of the monotone function $x \mapsto p \vee (q \wedge \diam (r \wedge x))$, and
\item $\AF_c(p,q,r)$ is the least pre-fixpoint of the monotone function $x \mapsto p \vee \Box \AR(q \vee (r \wedge x),p)$.
\end{enumerate}
\end{proposition}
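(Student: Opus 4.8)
The plan is to treat each clause as a least-pre-fixpoint statement and to prove it in two moves: show that the displayed term is a pre-fixpoint of the relevant operator, and then show it lies below every pre-fixpoint. The only machinery I expect to need are the fixpoint rules for $\EU$ (from Definition~\ref{d:ctlf}) and the derived rules for $\AF,\AR$ recorded just before the statement, together with the fact that $\diam$ preserves finite joins, that $\Box$ preserves finite meets, and routine Boolean distributivity. Throughout I abbreviate the operator in (1) by $F(x):=p\vee(q\wedge\diam(r\wedge x))$ and the operator in (2) by $H(x):=p\vee\Box\AR(q\vee(r\wedge x),p)$, and I write $\Psi(c):=p\vee\Box\AR(q\vee c,p)$, so that $\AF(p,q)$ is by definition the least pre-fixpoint of $\Psi$.

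For (1) the engine is the identity $r\wedge\EU_c(p,q,r)=\EU(p\wedge r,q\wedge r)$. Setting $u:=\EU(p\wedge r,q\wedge r)$, which is a genuine fixpoint of $y\mapsto(p\wedge r)\vee((q\wedge r)\wedge\diam y)$ by \eqref{EUfix} and \eqref{EUmin}, this identity follows by distributing $r\wedge(-)$ over the definition of $\EU_c$. Substituting it back gives $F(\EU_c(p,q,r))=p\vee(q\wedge\diam u)=\EU_c(p,q,r)$, so the term is a (pre-)fixpoint of $F$. For minimality I would take any $x$ with $F(x)\le x$, meet the inequality with $r$, and read off that $r\wedge x$ is a pre-fixpoint of $y\mapsto(p\wedge r)\vee((q\wedge r)\wedge\diam y)$; then \eqref{EUmin} gives $u\le r\wedge x$, monotonicity of $\diam$ gives $q\wedge\diam u\le q\wedge\diam(r\wedge x)\le x$, and since $p\le x$ as well, $\EU_c(p,q,r)\le x$.

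For (2) the analogous engine is a \emph{meet-multiplicativity} property of $\Psi$. Because $\AR(\cdot,p)$ sends meets of its first argument to meets (this is proved from \eqref{ARfix} and \eqref{ARmax}: if $c_1,c_2$ are the respective post-fixpoints then $c_1\wedge c_2$ is a post-fixpoint of $c\mapsto(a_1\wedge a_2)\wedge(p\vee\Box c)$), and because $\Box$ and $p\vee(-)$ preserve meets while $q\vee(r\wedge c)=(q\vee r)\wedge(q\vee c)$, one obtains $\Psi(a)\wedge\Psi(b)=\Psi(a\wedge b)$. Taking $a=r$ yields the crucial identity $H(x)=\Psi(r\wedge x)=g\wedge\Psi(x)$, where $g:=\Psi(r)=p\vee\Box\AR(q\vee r,p)$ is exactly the second conjunct of $\AF_c(p,q,r)$. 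Writing $f:=\AF(p,q)$, the pre-fixpoint direction is then immediate from $\Psi(f)=f$ (by \eqref{AFfix}, \eqref{AFmin}): $H(f\wedge g)=g\wedge\Psi(f\wedge g)\le g\wedge\Psi(f)=f\wedge g=\AF_c(p,q,r)$.

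The hard part, and the main obstacle, is the minimality of (2). The part-(1) device of restricting a pre-fixpoint inequality to the context does \emph{not} close here, because in $H$ the recursion variable is threaded through the \emph{inner greatest fixpoint} $\AR$: the naive candidate $x\vee\neg g$ for \eqref{AFmin} fails to be a $\Psi$-pre-fixpoint, as the residual term $r\wedge\neg g$ that it introduces inside $\AR$ need not lie below $x$. My plan to get around this is, first, to reduce to the case $x\le g$, which is legitimate because $H(x\wedge g)=g\wedge\Psi(x\wedge g)\le g\wedge\Psi(x)=H(x)\le x$ shows $x\wedge g$ is again a pre-fixpoint, while $\AF_c(p,q,r)\le x$ is equivalent to $\AF_c(p,q,r)\le x\wedge g$ since $\AF_c(p,q,r)\le g$; and second, to feed the hypothesis $g\wedge\Psi(x)\le x$ into \eqref{AFmin} by constructing the needed $\Psi$-pre-fixpoint with the help of the inner rule \eqref{ARmax}, exploiting that $g=\Psi(r)$ already pins down $\AR(q\vee r,p)$ on precisely the nodes where the residual context would otherwise interfere. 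Checking that this absorption genuinely goes through is the one delicate computation, and is exactly the point at which the proof must invoke the defining fixpoint property of $\AR$ rather than mere monotonicity.
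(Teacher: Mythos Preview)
Your treatment of part~(1) is correct and is exactly the paper's argument: the identity $r\wedge\EU_c(p,q,r)=\EU(p\wedge r,q\wedge r)$ does all the work. For the pre-fixpoint direction of part~(2), your meet-multiplicativity identity $H(x)=g\wedge\Psi(x)$ with $g=\Psi(r)$ is a clean reformulation, and in fact slightly slicker than the paper's version of the same computation (the paper splits $\sigma(\AF_c)$ via distributivity and Lemma~\ref{l:basicsyntfacts}.2, which is your $\AR$-meet-preservation).

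The gap is in the minimality direction of~(2). You correctly diagnose that the part-(1) device fails because the recursion variable is nested inside $\AR$, and you correctly reduce to $x\le g$; but after that your proposal is only a plan, not a proof. You say you will ``feed the hypothesis $g\wedge\Psi(x)\le x$ into \eqref{AFmin} by constructing the needed $\Psi$-pre-fixpoint'' and that the ``absorption genuinely goes through''; however you do not produce the pre-fixpoint, and the natural candidates (such as $x\vee\neg g$) do not work, as you yourself note. The difficulty is real: from $\Psi(x)\le x\vee\neg g$ one cannot conclude $\Psi(x\vee\neg g)\le x\vee\neg g$ by monotonicity or by \eqref{ARmax} alone, because the extra disjunct $\neg g$ inside $\AR(q\vee x\vee\neg g,p)$ goes the wrong way.

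The paper resolves this by a genuinely different idea: it does \emph{not} look for a $\Psi$-pre-fixpoint at all. Instead, writing $\alpha:=\AR(q\vee r,p)$, it passes to the \emph{modified} operator $\AF(p\vee\diam\neg\alpha,\,q\vee s)$ and shows that $s\vee\diam\neg\alpha$ is a pre-fixpoint of \emph{that}. The key technical step (Lemma~\ref{l:basicsyntfacts}.6, a ``$\Box c$-invariant context can be pushed inside $\EU$'' lemma, dualized) yields $\Box\AR(q\vee s\vee\diam\neg\alpha,\,p\vee\diam\neg\alpha)\le s\vee\diam\neg\alpha$, whence $\AF(p\vee\diam\neg\alpha,q\vee s)\wedge\Box\alpha\le s$; monotonicity of $\AF$ then gives $\AF(p,q)\wedge\Box\alpha\le s$ and hence $\AF_c(p,q,r)\le s$. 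The point is that changing the \emph{parameters} of $\AF$ (adding $\diam\neg\alpha$ to the first coordinate) is what makes the residual term absorbable --- an idea your sketch does not contain. You should either carry out your ``absorption'' explicitly (and be prepared for it to require something equivalent to Lemma~\ref{l:basicsyntfacts}.6), or adopt the paper's change-of-parameters trick.
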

\begin{remarkconv}\label{r:equiexp}
Note that, for any $p,q$, we have 
$$
\EU_c(p,q,\top) = \EU(p,q)~\text{and}~ \AF_c(p,q,\top) = \AF(p,q)~.
$$ 
Thus, in the syntax of $\CTLf$, we can replace the operator $\EU$ by $\EU_c$ and the operator $\AF$ by $\AF_c$, and obtain an equi-expressive formalism. For this reason, and
in this section only, \emph{we will drop the subscript `$c$' and simply use the notations $\EU$ and $\AF$ for both the ternary and the binary versions} of these operators. Any `binary' occurrence $\EU(\phi,\psi)$ or $\AF(\phi,\psi)$ should be read as $\EU(\phi,\psi,\top)$ or $\AF(\phi,\psi,\top)$, respectively. Formally, this is only a syntactic convenience, but it turns out to be very useful in  the completeness proof.
\end{remarkconv}

The reason for introducing the contextual operations is the following lemma that we refer to as a `context rule'. This is the version of \cite[Prop. 5.7(vi)]{Koz1983} that we need here.
\begin{proposition}\label{p:contextrules}
For any elements $p,q,r,\gamma$ of a $\CTLf$-algebra $\b{A}$, we have
\begin{enumerate}
\item if $\gamma \wedge \EU(p,q,r) \neq \bot$, then $\gamma \wedge \EU(p,q,r\wedge \neg \gamma) \neq \bot$,
\item if $\gamma \wedge \AF(p,q,r) \neq \bot$, then $\gamma \wedge \AF(p,q, r \wedge \neg \gamma) \neq \bot$.
\end{enumerate}
\end{proposition}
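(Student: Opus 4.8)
The plan is to prove both items in contrapositive form, i.e. to show that $\gamma \wedge \EU(p,q,r\wedge\neg\gamma) = \bot$ forces $\gamma \wedge \EU(p,q,r) = \bot$, and similarly for $\AF$. Throughout I would use the fixpoint descriptions of Proposition~\ref{p:EUcAFcfixpoint} together with the rules of Definition~\ref{d:ctlf}, recalling that in any $\CTLf$-algebra each $\EU$-term is a genuine least \emph{fixpoint} and each $\EG$-term a genuine greatest fixpoint (applying the monotone operator once to a pre-fixpoint and invoking minimality gives the reverse inequality). The two items differ sharply in difficulty: item~(1) is a single least-fixpoint induction, whereas item~(2) additionally needs a coinductive step.

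For item~(1), set $\delta := \EU(p,q,r\wedge\neg\gamma)$ and consider $z := \delta \vee \EU(\gamma\wedge\delta,\,q,\,r)$. The key step is $\EU(p,q,r) \leq z$, which I would obtain from \eqref{EUmin} by checking that $z$ is a pre-fixpoint of $x \mapsto p \vee (q \wedge \diam(r\wedge x))$. Expanding $r\wedge z$ along $r = (r\wedge\gamma)\vee(r\wedge\neg\gamma)$ and using $\gamma\wedge\delta \leq \EU(\gamma\wedge\delta,q,r)$, one sends the ``$\gamma$-part'' into the second disjunct and the ``$\neg\gamma$-part'' into $\delta$, so that the pre-fixpoint condition reduces to the fixpoint identities for $\delta$ and for $\EU(\gamma\wedge\delta,q,r)$. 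The collapse is then immediate: if $\gamma\wedge\delta = \bot$ the second disjunct is $\EU(\bot,q,r)$, which equals $\bot$ since $\diam\bot = \bot$ makes $\bot$ a pre-fixpoint; hence $\EU(p,q,r)\leq\delta$ and $\gamma\wedge\EU(p,q,r)\leq\gamma\wedge\delta=\bot$.

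For item~(2) I would first dualise. From $\AF(p,q,\rho) = \AF(p,q)\wedge(p\vee\Box\AR(q\vee\rho,p))$ together with $\AF(a,b)=\neg\EG(\neg a,\neg b)$ and $\AR(a,b)=\neg\EU(\neg a,\neg b)$, a direct computation gives
$$ \neg\AF(p,q,\rho) = \EG(\neg p,\neg q)\ \vee\ \bigl(\neg p \wedge \diam\,\EU(\neg q\wedge\neg\rho,\ \neg p)\bigr). $$
Write $B := \EG(\neg p,\neg q)$, $M := \neg p\wedge\diam\,\EU(\neg q\wedge\neg r,\neg p)$ (case $\rho=r$), and $M' := \neg p\wedge\diam\,\EU(\neg q\wedge(\neg r\vee\gamma),\neg p)$ (case $\rho=r\wedge\neg\gamma$, where $\neg(r\wedge\neg\gamma)=\neg r\vee\gamma$). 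The hypothesis $\gamma\wedge\AF(p,q,r\wedge\neg\gamma)=\bot$ then reads $\gamma\leq B\vee M'$, and since $p\leq\AF(p,q,r\wedge\neg\gamma)$ it also forces $\gamma\leq\neg p$; the goal becomes $\gamma\leq B\vee M=\neg\AF(p,q,r)$. The heart of the argument, which I expect to be the main obstacle, is to prove $\gamma\wedge\neg M\leq B$ by coinduction via \eqref{EGmax}, using the candidate post-fixpoint $c := (\gamma\wedge\neg M)\vee B$ and verifying $c\leq\neg p\wedge\diam\,\EU(\neg q\wedge c,\neg p)$. The summand $B$ is handled by \eqref{EGfix} and monotonicity; the summand $\gamma\wedge\neg M$ uses $\gamma\leq\neg p$ and reduces, via $\gamma\leq B\vee M'$, to $M'\wedge\neg M\leq\diam\,\EU(\neg q\wedge c,\neg p)$. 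For this last inequality I would insert an auxiliary least-fixpoint lemma proved exactly as in item~(1): writing $X := \EU(\neg q\wedge\neg r,\neg p)$ and $Y := \EU(\neg q\wedge\gamma\wedge\neg M,\neg p)$, the element $X\vee Y$ is a pre-fixpoint of $x\mapsto(\neg q\wedge(\neg r\vee\gamma))\vee(\neg p\wedge\diam x)$ — the crucial subcase being that a $\neg q\wedge\gamma$ node lies in $X$ when it is in $M$ (via $\neg p\wedge\diam X\leq X$) and in $Y$ otherwise — whence $\EU(\neg q\wedge(\neg r\vee\gamma),\neg p)\leq X\vee Y$. Since $M=\neg p\wedge\diam X$ gives $M'\wedge\neg M = \neg p\wedge\diam\,\EU(\neg q\wedge(\neg r\vee\gamma),\neg p)\wedge\neg\diam X$, this forces $M'\wedge\neg M\leq\diam Y\leq\diam\,\EU(\neg q\wedge c,\neg p)$, finishing the post-fixpoint check; then \eqref{EGmax} yields $c\leq B$, hence $\gamma\wedge\neg M\leq B$ and $\gamma\leq B\vee M$.

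The genuine difficulty is therefore entirely in item~(2), where the nested greatest fixpoint $\AR$ sitting inside a least fixpoint makes the proof interleave a coinductive step for $\EG$ with an embedded least-fixpoint induction for the inner $\EU$. What makes the coinduction go through is the choice of witness $c$: folding the ``bad fair path'' set $B=\EG(\neg p,\neg q)$ into $(\gamma\wedge\neg M)$ is precisely what absorbs the possibility of re-entering $\gamma$ infinitely often, and I expect the bulk of the work to lie in setting up $c$ and the auxiliary $\EU$-inequality correctly rather than in the (routine) Boolean and fixpoint manipulations themselves.
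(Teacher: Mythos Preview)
Your proof is correct, but it takes a very different and substantially more laborious route than the paper's. The paper observes that Proposition~\ref{p:EUcAFcfixpoint} already characterizes \emph{both} $\EU_c(p,q,r)$ and $\AF_c(p,q,r)$ as least pre-fixpoints of monotone functions of the shape $x \mapsto \sigma(r \wedge x)$. It then proves a single abstract lemma (Lemma~\ref{l:gencontrule}, essentially Kozen's Prop.~5.7(vi)): if $S(\overline p,r)$ is the least fixpoint of $x \mapsto \sigma(\overline p, r \wedge x)$ and $S(\overline p, r \wedge \delta) \leq \delta$, then $r \wedge S(\overline p, r\wedge\delta) = (r\wedge\delta)\wedge S(\overline p, r\wedge\delta)$, so $S(\overline p, r\wedge\delta)$ is itself a fixpoint of $x \mapsto \sigma(\overline p, r \wedge x)$, whence $S(\overline p,r) \leq S(\overline p, r\wedge\delta) \leq \delta$. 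Both items (1) and (2) then fall out in one line.

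The contrast is instructive. You treat the two items asymmetrically, regarding (2) as ``sharply'' harder because of the nested $\AR$ inside $\AF$, and you attack it by dualizing to $\EG$ and running a coinduction interleaved with an auxiliary $\EU$-induction. All of that machinery is sound (your post-fixpoint witness $c$ and the splitting $\EU(\neg q\wedge(\neg r\vee\gamma),\neg p)\leq X\vee Y$ both check out), but it is unnecessary: the hard work was already done in Proposition~\ref{p:EUcAFcfixpoint}(2), which shows $\AF_c$ is a \emph{least} fixpoint of the right shape, making item~(2) formally identical to item~(1). What your approach buys is a self-contained argument that does not rely on that (itself nontrivial) characterization of $\AF_c$; what the paper's approach buys is a two-line proof that makes the symmetry between the two cases manifest and isolates the general principle cleanly.
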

%
%

\subsection{Other preliminary notions}\label{ss:preliminaries}
We recall and fix notation for negation normal form, representation of modal algebras via ultrafilters, types, and Fischer-Ladner closure.

\heading{Negation normal form}
It will be convenient to put $\CTLf$-formulas in \emph{negation normal form}.
\begin{definition}\label{d:nnf}
Let $\p$ be a finite set of propositional variables. The set of \emph{$\CTLf$-formulas in negation normal form} is defined via the following grammar:
\begin{align*}
&\phi \isbnf
   &\bot \divbnf \top \divbnf p \divbnf \neg p 
    \divbnf \diam \phi \divbnf \Box \phi \divbnf \phi \lor \phi \divbnf \phi \land \phi~~~~~~~~~~~~~~~~~
 \\&&  
   \divbnf \EU(\phi,\psi,\chi)      \divbnf \AR(\phi,\psi)
   \divbnf \EG(\phi,\psi) \divbnf \AF(\phi,\psi,\chi)
\end{align*}
Note that negation is only allowed to be applied to propositional variables. We do not need ternary connectives for $\AR$ and $\EG$.
\end{definition}
\begin{lemma}\label{l:nnf}
Any $\CTLf$-formula is equivalent to a $\CTLf$-formula in negation normal form.
\end{lemma}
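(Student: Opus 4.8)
The plan is to argue by induction on the structure of the formula, systematically driving every negation inward until it reaches a propositional variable. Concretely, I would prove the slightly stronger statement that for each $\CTLf$-formula $\phi$ \emph{both} $\phi$ and $\neg\phi$ are equivalent to formulas in negation normal form; the lemma is then the first half of this claim, and the simultaneous form is what makes the negation case of the induction go through cleanly.

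The base cases are immediate: $\bot$, $\top$ and each variable $p$ are already in normal form, as is $\neg p$. For the inductive step I rewrite a negated compound formula using dualities, each of which is a valid equation in every $\CTLf$-algebra. The Boolean De Morgan laws and double-negation elimination follow from the Boolean algebra axioms (Definition~\ref{d:ctlf}(i)). The modal dualities $\neg\diam\phi = \Box\neg\phi$ and $\neg\Box\phi = \diam\neg\phi$ follow from the definition $\Box a := \neg\diam\neg a$ together with double negation. Finally, the temporal dualities
\begin{align*}
\neg\EU(\phi,\psi) &= \AR(\neg\phi,\neg\psi), & \neg\AR(\phi,\psi) &= \EU(\neg\phi,\neg\psi),\\
\neg\EG(\phi,\psi) &= \AF(\neg\phi,\neg\psi), & \neg\AF(\phi,\psi) &= \EG(\neg\phi,\neg\psi)
\end{align*}
are read off directly from the defining equations $\AR(a,b) := \neg\EU(\neg a,\neg b)$ and $\AF(a,b) := \neg\EG(\neg a,\neg b)$ of Definition~\ref{d:syntax}, again using double negation. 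Applying these identities replaces the negation of a compound by a compound whose immediate subformulas are negations of strictly smaller formulas, to which the induction hypothesis applies; the positive cases ($\diam$, $\Box$, $\vee$, $\wedge$, $\EU$, $\AR$, $\EG$, $\AF$ applied to normal-form inputs) are already productions of the grammar of Definition~\ref{d:nnf}.

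The one point requiring care is the ternary contextual operators, since by Remark~\ref{r:equiexp} $\EU$ and $\AF$ now carry a third `context' coordinate. A positive occurrence $\EU(\phi,\psi,\chi)$ or $\AF(\phi,\psi,\chi)$ is already a grammar production, so nothing is needed there. For a negative occurrence I would unfold the abbreviation of Definition~\ref{d:EUcAFc}: since $\EU_c(p,q,r)$ and $\AF_c(p,q,r)$ are Boolean--modal combinations built from $\diam$, $\Box$, the binary $\EU$ and $\AR$, and meets and joins, pushing the negation through such a combination reduces to the cases already treated. For instance, $\neg\EU(\phi,\psi,\chi)$ becomes $\neg\phi \wedge (\neg\psi \vee \Box\AR(\neg\phi\vee\neg\chi,\neg\psi\vee\neg\chi))$, which lies in the grammar once $\neg\phi$, $\neg\psi$, $\neg\chi$ are put in normal form by the induction hypothesis; the dual computation handles $\neg\AF(\phi,\psi,\chi)$. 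Thus the class of normal-form formulas is closed under all the required rewrites.

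I expect no serious obstacle: the result is the standard observation that the fragment is closed under De Morgan-style dualities, and all four temporal dualities are immediate from the derived-operator definitions. The mildest subtlety is simply verifying that negating the ternary contextual operators lands back inside the grammar, which the unfolding above makes explicit; everything else is routine structural bookkeeping.
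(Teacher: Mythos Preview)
Your proposal is correct and follows essentially the same approach as the paper: define the dual of each connective and push negations inward. The paper phrases this as defining a ``formal negation'' $\overline{\phi}$ by structural recursion and observing $\neg\psi$ is equivalent to $\overline{\psi}$; your simultaneous induction on $\phi$ and $\neg\phi$ is the same argument in different packaging.

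One remark: your discussion of the ternary contextual operators is unnecessary for the lemma as stated. The input is a $\CTLf$-formula in the sense of Definition~\ref{d:syntax}, where $\EU$ and $\EG$ are \emph{binary}; the ternary $\EU_c$ and $\AF_c$ are only introduced afterwards as derived term operations. The paper's proof accordingly handles only the binary cases and then notes explicitly that the ternary slots in the NNF grammar are not actually needed for this lemma (they are included in Definition~\ref{d:nnf} only because they arise later in the tableau construction). So your unfolding of $\neg\EU(\phi,\psi,\chi)$ is correct but superfluous here.
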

Throughout the rest of this section, we assume all $\CTLf$-formulas are in negation normal form.

\heading{Representation of modal algebras}
We will make use of the following representation of the modal algebra underlying a $\CTLf$-algebra.
\begin{definition}\label{d:dualframe}
Let $\b{A}$ be a modal algebra. The \emph{dual frame} of $\b{A}$ is the pair $\b{A}_* = (A_*,R_*)$, where
\begin{itemize}
\item $A_*$ is the set of ultrafilters of the Boolean algebra $A$;
\item $R_*$ is the binary relation on $X$ defined by $x{R_*}y$ if, and only if, for every $a \in A$, if $a \in y$ then $\diam a \in x$.
\end{itemize}
\vspace{-5mm}
\end{definition}
\begin{theorem}\label{t:jtrep}\cite{JonTar1951}
Any modal algebra embeds in the complex algebra of its dual frame.
\end{theorem}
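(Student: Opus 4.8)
The plan is to combine the classical Stone representation of the Boolean reduct with a verification that the modal operator is preserved. First I would define, for each $a \in A$, the set $\hat{a} := \{x \in A_* \mid a \in x\}$ of ultrafilters containing $a$. The Stone representation theorem for Boolean algebras already yields that $a \mapsto \hat{a}$ is an injective Boolean homomorphism from $A$ into $\mathcal{P}(A_*)$, injectivity resting on the Boolean prime ideal theorem, which separates distinct elements by some ultrafilter. Thus the only remaining task is to show that this map commutes with the modality, i.e.\ that $\widehat{\diam a} = \diam_{R_*}(\hat{a})$ for every $a \in A$, where by definition of the complex algebra $\diam_{R_*}(\hat{a}) = \{x \in A_* \mid \exists\, y \in A_*,\ x R_* y \text{ and } a \in y\}$.

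The inclusion $\diam_{R_*}(\hat{a}) \subseteq \widehat{\diam a}$ is immediate from the definition of $R_*$: if $x R_* y$ and $a \in y$, then $\diam a \in x$. The reverse inclusion is the heart of the matter, and I expect it to be the main obstacle, since it is the only place where genuine work occurs. Given an ultrafilter $x$ with $\diam a \in x$, I must produce an ultrafilter $y$ with $a \in y$ and $x R_* y$, i.e.\ with $b \in y \Rightarrow \diam b \in x$ for all $b$.

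To construct $y$, I would consider the set $I := \{b \in A \mid \diam b \notin x\}$ and show it is a Boolean ideal, using precisely the normality axioms (ii). Indeed, $\diam\bot = \bot \notin x$ gives $\bot \in I$; monotonicity of $\diam$ (a consequence of join-preservation) together with upward closure of $x$ makes $I$ downward closed; and $\diam(b_1 \vee b_2) = \diam b_1 \vee \diam b_2$ together with primeness of the ultrafilter $x$ makes $I$ closed under binary joins. The principal filter $\mathord{\uparrow} a$ is disjoint from $I$: if $a \leq c$ then $\diam a \leq \diam c$, so $\diam a \in x$ forces $\diam c \in x$, whence $c \notin I$. Applying the Boolean prime ideal theorem to the disjoint filter $\mathord{\uparrow} a$ and ideal $I$ yields an ultrafilter $y \supseteq \mathord{\uparrow} a$ with $y \cap I = \emptyset$; unpacking, $a \in y$ and every $b \in y$ satisfies $\diam b \in x$, which is exactly $x R_* y$. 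This establishes $\widehat{\diam a} \subseteq \diam_{R_*}(\hat{a})$ and completes the argument, the whole difficulty being concentrated in recognising that axioms (ii) are precisely what guarantee $I$ is an ideal.
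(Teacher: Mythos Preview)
Your proof is correct and follows the standard Jónsson--Tarski argument. The paper does not actually prove this theorem (it is cited as a classical result from \cite{JonTar1951}), but it does isolate the nontrivial inclusion as Lemma~\ref{l:diamstep} and proves it in the appendix via exactly the same ideal-and-prime-ideal-theorem argument you give: form the ideal $\{b \mid \diam b \notin x\}$, observe it misses $a$, and extend to an ultrafilter.
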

By contrast, not every $\CTLf$-algebras embeds into a complex $\CTLf$-algebra.
An important part of Theorem~\ref{t:jtrep} is worth recording separately.
\begin{lemma}\label{l:diamstep}
Let $\b{A}$ be a modal algebra with dual frame $\b{A}_*$. If $a \in A$, $x \in \b{A}_*$, and $\diam a \in x$, then there exists $y \in \b{A}_*$ such that $x {R_*} y$ and $a \in y$.
\end{lemma}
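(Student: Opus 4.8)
The plan is to prove Lemma~\ref{l:diamstep} directly from the definition of the dual frame (Def.~\ref{d:dualframe}) using the ultrafilter extension theorem for Boolean algebras. The statement says: given $\diam a \in x$, I must produce an ultrafilter $y$ with $a \in y$ and $x R_* y$; unwinding the definition of $R_*$, the condition $x R_* y$ says that for every $b \in A$, if $b \in y$ then $\diam b \in x$. So the goal is to find an ultrafilter $y$ containing $a$ such that the set $\diam^{-1}[x] := \{b \in A \mid \diam b \in x\}$ contains $y$, i.e. $y \subseteq \diam^{-1}[x]$.

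First I would consider the filter $F$ generated by the set $\{a\} \cup \{b \in A \mid \Box b \in x\}$; equivalently, work with the ``box-preimage'' of $x$. The natural candidate is to take the filter generated by $a$ together with all $b$ such that $\Box b \in x$, and then extend it to an ultrafilter $y$. The point of including the $\Box b \in x$ part is that it is exactly what forces the back-condition $x R_* y$: if $c \in y$ and we want $\diam c \in x$, we argue contrapositively, since if $\diam c \notin x$ then $\neg \diam c = \Box \neg c \in x$ (as $x$ is an ultrafilter and $\Box = \neg\diam\neg$), so $\neg c$ lies in the generating set of $F$, whence $\neg c \in y$, contradicting $c \in y$ together with $y$ being a filter.

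The main obstacle, and the only step requiring real work, is to check that this filter $F$ is \emph{proper} (i.e. $\bot \notin F$), so that it can be extended to an ultrafilter by the Boolean prime/ultrafilter theorem. Suppose for contradiction that $F$ is improper; then, since $F$ is generated by $a$ and finitely many elements $b_1,\dots,b_k$ with $\Box b_i \in x$, and a finite meet of elements with $\Box b_i \in x$ again has its box in $x$ (because $\Box$ preserves finite meets in a modal algebra, using axiom (ii) dualized), we may collect these into a single element $b$ with $\Box b \in x$ and $a \wedge b = \bot$. Then $a \leq \neg b$, so $\diam a \leq \diam \neg b$ by monotonicity of $\diam$; but $\Box b \in x$ means $\neg \diam \neg b \in x$, i.e. $\diam \neg b \notin x$, and since $\diam a \leq \diam \neg b$ this would force $\diam a \notin x$ as $x$ is upward closed, contradicting the hypothesis $\diam a \in x$.

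Once properness is established, I apply the ultrafilter theorem to extend $F$ to an ultrafilter $y$. By construction $a \in F \subseteq y$, and the back-condition $x R_* y$ follows from the contrapositive argument sketched above, completing the proof. Essentially this is the standard existence-of-successor lemma in the Jónsson--Tarski representation (Thm.~\ref{t:jtrep}); the only $\CTLf$-specific ingredient is that the ambient algebra is a modal algebra, so the finite-meet preservation property of $\Box$ and the monotonicity of $\diam$ are available from axiom~(ii) of Def.~\ref{d:ctlfalgebra}.
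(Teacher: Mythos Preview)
Your proof is correct and is essentially the same as the paper's, phrased dually: the paper works with the ideal $I = \{b \in A \mid \diam b \notin x\}$, observes that $a \notin I$, and invokes the ultrafilter principle to get $y \ni a$ disjoint from $I$; your filter $\{b \mid \Box b \in x\}$ is exactly the filter dual to $I$, and extending $\{a\}$ together with this filter to an ultrafilter is the same as finding an ultrafilter containing $a$ and avoiding $I$. Your explicit verification of properness (via $\Box$ preserving finite meets) is just the dual of checking that $I$ is an ideal.
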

\heading{Types and characteristic formulas}
The following equivalence relations on the points of $\b{A}_*$, and characterizing formulas for them, will also be useful. In the following definition, recall that a point $x\in\b{A}_*$ is an ultrafilter of $\b{A}$  and so, under Convention~\ref{c:phi}, it makes sense to say that $\phi$ \emph{belongs to} $x$.
\begin{definition}\label{d:charform}
Let $\rho$ be a finite set of formulas. For any $x, x' \in \b{A}_*$, define
\[ x \sim_\rho x' \iff x \cap \rho = x' \cap \rho.\]
We call the equivalence class of a point $x$ under $\sim_\rho$ the \emph{$\rho$-type of $x$}.

For any $x \in \b{A}_*$, define the {\it characteristic formula}
\[\kappa(x,\rho) := \bigwedge \{\gamma \ | \ \gamma \in \rho \cap x\} \wedge \bigwedge \{\neg \gamma \ | \ \gamma \in \rho \setminus x\}.\vspace{-5mm}\]

\end{definition}
\begin{lemma}\label{l:charform}
For any set of formulas $\rho$ and points $x, x' \in \b{A}_*$, we have
\[ x \sim_\rho x' \iff \kappa(x,\rho) \in x'.\]
\end{lemma}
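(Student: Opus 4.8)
The plan is to prove both directions directly from the defining properties of an ultrafilter $x'$ of the Boolean algebra $A$: that it is a proper filter (upward closed and closed under finite meets) and that it is \emph{prime}, i.e. for every $a \in A$ exactly one of $a$ and $\neg a$ lies in $x'$. Throughout I work under Convention~\ref{c:phi}, identifying each formula $\gamma \in \rho$ with its interpretation in $\b{A}$, so that $\kappa(x,\rho)$ denotes the corresponding finite meet in $A$; note in particular that interpretation commutes with $\neg$, so a conjunct $\neg\gamma$ of $\kappa(x,\rho)$ really denotes $\neg(\gamma^{\b{A}})$. The single structural observation underlying both implications is that $\kappa(x,\rho)$ lies below each of its conjuncts: $\kappa(x,\rho) \leq \gamma$ for every $\gamma \in \rho \cap x$, and $\kappa(x,\rho) \leq \neg\gamma$ for every $\gamma \in \rho \setminus x$.

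For the forward implication I would assume $x \sim_\rho x'$, i.e. $x \cap \rho = x' \cap \rho$, and check that every conjunct of $\kappa(x,\rho)$ belongs to $x'$. If $\gamma \in \rho \cap x$, then $\gamma \in \rho \cap x'$ by hypothesis, so $\gamma \in x'$; if $\gamma \in \rho \setminus x$, then $\gamma \notin x \cap \rho = x' \cap \rho$, and since $\gamma \in \rho$ this forces $\gamma \notin x'$, whence $\neg\gamma \in x'$ by primality. Because $\rho$ is finite, $\kappa(x,\rho)$ is a finite meet of elements of $x'$, and closure of the filter $x'$ under finite meets gives $\kappa(x,\rho) \in x'$.

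For the converse I would assume $\kappa(x,\rho) \in x'$ and read off the same information using upward closure together with the observation above. For each $\gamma \in \rho \cap x$ we have $\kappa(x,\rho) \leq \gamma$, so $\gamma \in x'$; for each $\gamma \in \rho \setminus x$ we have $\kappa(x,\rho) \leq \neg\gamma$, so $\neg\gamma \in x'$ and hence $\gamma \notin x'$ by primality. Letting $\gamma$ range over all of $\rho$, this says exactly that $\gamma \in x \iff \gamma \in x'$ for $\gamma \in \rho$, i.e. $x \cap \rho = x' \cap \rho$, which is $x \sim_\rho x'$.

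The argument is entirely elementary and I do not expect a substantial obstacle. The only points genuinely needing care are the bookkeeping imposed by Convention~\ref{c:phi} (distinguishing a formula from its interpretation when writing $\gamma \in x$ and forming $\kappa(x,\rho)$), and the use of finiteness of $\rho$, which is what makes $\kappa(x,\rho)$ a well-defined element of $A$ and legitimizes the finite-meet closure step in the forward direction.
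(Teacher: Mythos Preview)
Your proof is correct and follows essentially the same approach as the paper: both directions are read off from the ultrafilter properties of $x'$ (closure under finite meets, upward closure, and primality), exactly as you do. The paper's version is just a one-line compression of your argument.
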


We  combine the above with Proposition~\ref{p:contextrules} to obtain the following useful fact, which will allow us, in the next subsection, to make `jumps' in the ultrafilter frame of $\b{A}$.
\begin{lemma}\label{l:jump}
Let $\rho$ be a finite set of formulas, let $\hs \in \{\EU,\AF\}$, and let $\phi$, $\psi$, and $\chi$ be formulas. For any $x \in \b{A}_*$ such that $\hs(\phi,\psi,\chi) \in x$, there exists $x' \in \b{A}_*$ such that $x \sim_\rho x'$ and $\hs(\phi,\psi,\chi \wedge \neg\kappa(x,\rho)) \in x'$.
\end{lemma}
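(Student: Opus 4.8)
The plan is to choose the context cleverly: take $\gamma := \kappa(x,\rho)$, the characteristic formula of the $\rho$-type of $x$, and feed it into the context rule of Proposition~\ref{p:contextrules}. First I would observe that $\gamma \in x$. Indeed, $\kappa(x,\rho)$ is a finite conjunction of formulas of the form $\delta$ with $\delta \in \rho \cap x$, together with formulas $\neg\delta$ with $\delta \in \rho \setminus x$; each such conjunct lies in $x$ (using that $x$ is an ultrafilter, so $\delta \notin x$ forces $\neg\delta \in x$), and since $x$ is a filter the whole conjunction $\gamma$ is in $x$ as well.

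Next, since by hypothesis $\hs(\phi,\psi,\chi) \in x$ and we have just shown $\gamma \in x$, the meet $\gamma \wedge \hs(\phi,\psi,\chi)$ belongs to $x$; as $x$ is a proper filter, this element is therefore nonzero. Now I would apply the appropriate clause of Proposition~\ref{p:contextrules} --- clause (1) when $\hs = \EU$, clause (2) when $\hs = \AF$ --- with $p := \phi$, $q := \psi$, $r := \chi$, and with the same $\gamma = \kappa(x,\rho)$. This yields immediately that $\gamma \wedge \hs(\phi,\psi,\chi \wedge \neg\gamma) \neq \bot$.

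Finally, since this element is nonzero, the Boolean prime filter theorem supplies an ultrafilter $x' \in \b{A}_*$ containing it. As $x'$ is a filter, both conjuncts lie in $x'$: from $\gamma = \kappa(x,\rho) \in x'$ and Lemma~\ref{l:charform} I conclude $x \sim_\rho x'$, while the other conjunct $\hs(\phi,\psi,\chi \wedge \neg\kappa(x,\rho)) \in x'$ is exactly the desired membership. Hence this $x'$ witnesses the lemma.

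The argument is essentially bookkeeping once the right $\gamma$ has been identified; all of the genuine work is already packaged in Proposition~\ref{p:contextrules}, the `context rule', so I do not expect a serious obstacle here. The only point that requires a little care is to apply the context rule with $\gamma$ equal to the \emph{full} characteristic formula $\kappa(x,\rho)$, rather than to some arbitrary element separating $x$ from other types: it is precisely the statement $\kappa(x,\rho) \in x'$ that Lemma~\ref{l:charform} converts back into the type-equality $x \sim_\rho x'$, so choosing any weaker $\gamma$ would not deliver the required $\sim_\rho$-conclusion.
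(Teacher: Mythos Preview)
Your proof is correct and follows essentially the same approach as the paper: set $\gamma = \kappa(x,\rho)$, observe $\gamma \wedge \hs(\phi,\psi,\chi) \in x$ hence is nonzero, apply Proposition~\ref{p:contextrules} to get $\gamma \wedge \hs(\phi,\psi,\chi \wedge \neg\gamma) \neq \bot$, extend to an ultrafilter $x'$ via the Stone/prime filter theorem, and conclude $x \sim_\rho x'$ from Lemma~\ref{l:charform}. The paper's version is terser but structurally identical.
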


\heading{Fischer-Ladner closure}
A last standard concept that we need in our construction is the Fischer-Ladner closure of a finite set of formulas.
\begin{definition}\label{d:closure}
A set of $\CTLf$-formulas $\Gamma$ is called (Fischer-Ladner) \emph{closed} if the following hold:
\begin{itemize}
\item $\EU(\top,\top,\top) \in \Gamma$,
\item if $\phi \in \Gamma$, then $\phi' \in \Gamma$ for any subformula $\phi'$ of $\phi$,
\item if $\EG(\phi,\psi) \in \Gamma$, then $\diam \EU(\psi \wedge \EG(\phi,\psi),\phi) \in \Gamma$.
\item if $\AR(\phi,\psi) \in \Gamma$, then $\Box \AR (\phi,\psi) \in \Gamma$.
\item if $\EU(\phi,\psi,\chi) \in \Gamma$, then $\diam(\chi \wedge \EU(\phi,\psi,\chi)) \in \Gamma$,
\item if $\AF(\phi,\psi,\chi) \in \Gamma$, then
$\Box \AR(\psi \vee \chi,\phi) \in \Gamma$.
\end{itemize}
The \emph{closure} of a set of $\CTLf$-formulas is the smallest closed set containing it.
\end{definition}
\begin{lemma}\label{l:finiteclosure}
The closure of a finite set of $\CTLf$-formulas is finite.
\end{lemma}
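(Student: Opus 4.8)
The plan is to exhibit one explicit finite set $\hat\Gamma$ that is closed in the sense of Definition~\ref{d:closure} and contains the given finite set $\Sigma$; since the closure of $\Sigma$ is by definition the smallest closed set containing it, the closure will then be contained in $\hat\Gamma$ and hence be finite. The starting point is the set $T$ of all subformulas of formulas in $\Sigma \cup \{\EU(\top,\top,\top)\}$, which is finite because each formula has finitely many subformulas. The first clause of Definition~\ref{d:closure} is then handled for free, since $\EU(\top,\top,\top) \in T$, and the subformula clause is handled by keeping $\hat\Gamma$ subformula-closed throughout; so the real content is to close $T$ under the four ``unfolding'' clauses while staying finite.

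The key observation I would exploit is that each of the remaining clauses is \emph{local}: it is triggered by a single formula $\sigma$ of a fixed top-level shape and adds a single derived formula $g(\sigma)$ depending only on $\sigma$, namely $g(\EG(\phi,\psi)) = \diam\EU(\psi \wedge \EG(\phi,\psi),\phi,\top)$, $g(\AR(\phi,\psi)) = \Box\AR(\phi,\psi)$, $g(\EU(\phi,\psi,\chi)) = \diam(\chi \wedge \EU(\phi,\psi,\chi))$, and $g(\AF(\phi,\psi,\chi)) = \Box\AR(\psi \vee \chi,\phi)$, where binary $\EU$ is read as ternary with third argument $\top$ per Remark~\ref{r:equiexp}. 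I would then define an operator $G$ sending a set $X$ of formulas to the subformula-closure of $X$ together with all $g(\sigma)$ for $\sigma \in X$ of matching shape; this operator sends finite sets to finite sets, and I would set $\hat\Gamma := G^{2}(T)$, so that $\Sigma \subseteq T \subseteq \hat\Gamma$ automatically.

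It then remains to check that $G^{2}(T)$ is already a fixpoint of $G$, i.e. closed under all the unfolding clauses, and this finite syntactic case analysis is the step I expect to be the main obstacle. The point is to verify that the \emph{newly created} formulas can trigger no unbounded cascade. Applying $g$ to an $\AR$-, an $\EU$-, or an $\AF$-formula of $T$ produces a formula whose only triggering subformula either already lies in $T$ or re-triggers a clause whose output is already present: for $\AF$ the new subformula $\AR(\psi\vee\chi,\phi)$ re-triggers the $\AR$-clause, but its derived formula $\Box\AR(\psi\vee\chi,\phi)$ has already been added, so these stabilize after one round. The only genuine chaining is the $\EG$-case: $g(\EG(\phi,\psi))$ contains the new $\EU$-formula $\epsilon = \EU(\psi\wedge\EG(\phi,\psi),\phi,\top)$, which triggers the $\EU$-clause and contributes $\diam(\top\wedge\epsilon)$ in the second round; but re-applying the $\EU$-clause to $\epsilon$ merely reproduces $\diam(\top\wedge\epsilon)$, while the only remaining new subformulas are the Boolean combinations $\top\wedge\epsilon$ and $\psi\wedge\EG(\phi,\psi)$, which are of shape $\wedge$ and trigger nothing. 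Hence no third round adds anything, $G^{3}(T) = G^{2}(T)$, and $\hat\Gamma$ is the desired finite closed superset of $\Sigma$.
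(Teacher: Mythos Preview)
Your proposal is correct and follows essentially the same approach as the paper: exhibit a finite closed superset by applying the unfolding rules and taking subformula closure. The paper's proof is a one-line sketch (``apply the rules once, then add all subformulas; the result is easily seen to be closed''), whereas you carry out the case analysis explicitly and observe that, because of the $\EG$--$\EU$ interaction, two rounds rather than one are needed before the process stabilises; this extra care is justified, since the paper's literal ``one round'' reading does not quite produce a closed set in the $\EG$ case.
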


\subsection{Model construction}\label{ss:model}
Now that we have all the preliminaries in place, we will construct a tree for the consistent formula $\phi_0(\p)$ that we fixed above, based on the $\CTLf$-algebra $\b{A}$ and valuation $V: \p \to \b{A}$ (cf. Convention~\ref{c:phi} above). In what follows, $\Gamma_0$ denotes the Fischer-Ladner closure of $\{\phi_0\}$, which is finite by Lemma~\ref{l:finiteclosure}.

A standard model construction in modal logic would be to consider the quotient of the ultrafilter frame $\b{A}_*$ by the equivalence relation $\sim_{\Gamma_0}$. Our model construction is necessarily more intricate than that, because of the operators $\EU$ and $\AF$, which are defined as least fixpoints. Let us call an \emph{eventuality formula} a $\CTLf$-formula of the form $\hs(\phi,\psi,\chi)$, where $\hs \in \{\AF,\EU\}$. The \emph{set of eventuality formulas} in propositional variables $\p$ will be denoted by $\mathrm{Ev}(\p)$.

We will construct a tree $S$ as a union of finite trees. For  the construction of these finite trees, we use a notion of \emph{partial tableau for $\Gamma_0$ in $\b{A}$} (see Definition~\ref{d:partialtableau} below). Before giving the formal definition, we will explain the idea behind it.

A partial tableau for $\Gamma_0$ in $\b{A}$ will consist of a finite tree $T$ and two labellings, $\alpha$ and $\beta$. The labelling $\alpha$ will assign to each node of the finite tree $T$ an ultrafilter of $\b{A}$, which can be thought of as the set of formulas that we would like to force in that node. The labelling $\beta$ assigns to each node a data structure that records the `current status' of eventuality formulas in $\Gamma_0$. This data structure is a finite list of tuples of the form $(\theta,\sigma,\rho,\chip)$. Here, if the $k^\mathrm{th}$ element in the list $\beta(v)$ is $(\theta,\sigma,\rho,\chip)$, then $\theta = \hs(\phi,\psi,\chi)$ is an eventuality formula in $\Gamma_0$ which lies in $\alpha(v')$ for some $v' \preceq v$ (i.e. for some tree ancestor $v'$ of $v$); $\sigma$ is a `status' which can be either $\a$ (active), $\f$ (frozen) or $\e$ (extinguished); $\rho$ is a finite set of formulas that we call the `relevance set' and is used in the construction; and $\chip$ is a `context formula', which will be a strengthening of $\chi$. 
We now give the formal definition.

\begin{definition}\label{d:partialtableau}
Let $\Gamma_0$ be a finite closed set of $\CTLf$-formulas with variables in $\p$. Define
\[\Sigma := (\Gamma_0 \cap \mathrm{Ev}(\p)) \times \{\a,\f,\e\} \times \mathcal{P}_\mathrm{fin}(\CTLf(\p)) \times \CTLf(\p).\]

A \emph{partial tableau} for $\Gamma_0$ in $\b{A}$ is a tuple $(T,\alpha,\beta)$, where
\begin{itemize}
\item $T$ is a finite tree,
\item $\alpha$ is a function from $T$ to $\b{A}_*$, the set of ultrafilters of $\b{A}$,
\item $\beta$ is a function from $T$ to $\Sigma^*$, the set of finite words over $\Sigma$.
\end{itemize}

For each $v \in T$, we write $\ell(v)$ for the length of $\beta(v)$. For each $1 \leq k \leq \ell(v)$, we write $\beta(v)_k$ for the $k^\mathrm{th}$ letter of the word $\beta(v)$, and denote this letter by $(\theta(v)_k,\sigma(v)_k,\rho(v)_k,\chip(v)_k)$, where $\theta(v)_k = \hs(v)_k(\phi(v)_k,\psi(v)_k,\chi(v)_k)$ for some $\hs(v)_k \in \{\AF,\EU\}$ and formulas $\phi(v)_k$, $\psi(v)_k$ and $\chi(v)_k$.
\end{definition}

In accordance with the intuitive explanation of a partial tableau, we will also impose some {\it well-formedness} conditions on the partial tableau, namely (cf. Definition~\ref{d:wellformed} below): (a) any element in the list $\beta(v)$ persists in the list $\beta(v')$ for tree successors $v'$ of $v$; (b) if the first coordinate $\phi$ of an eventuality formula lies in $\alpha(v)$, then it is extinguished; (c) $\Gamma_0$ is always contained in the relevance set; (d) $\EU$-formulas can never be frozen; (e) $\chip$ is a strengthening of $\chi$; (f) eventuality formulas that occur at some earlier point in the list always lie in the relevance set; and (g) non-extinguished eventuality formulas must lie in $\alpha(v)$.
\begin{definition}\label{d:wellformed}
We say the partial tableau $(T,\alpha,\beta)$ for $\Gamma_0$ in $\b A$ is \emph{well-formed} if, for all $v \in T$ and $1 \leq k \leq \ell(v)$,
\begin{enumerate}
\item[(a)] for all $v' \in T$ such that $v \preceq v'$, we have $\ell(v) \leq \ell(v')$, and $\theta(v)_k = \theta(v')_k$;
\item[(b)] if $\phi(v)_k \in \alpha(v)$ then $\sigma(v)_k = \e$;
\item[(c)] $\Gamma_0 \subseteq \rho(v)_k$; 
\item[(d)] if $\hs(v)_k = \EU$, then $\sigma(v)_k \neq \f$;
\item[(e)] $\chip(v)_k \vdash \chi(v)_k$;
\item[(f)] if $k' < k$ then $\hs(v)_{k'}(\phi(v)_{k'},\psi(v)_{k'},\chip(v)_{k'}) \in \rho(v)_k$;
\item[(g)] if $\sigma(v)_k \neq \e$ then $\hs(v)_k(\phi(v)_k,\psi(v)_k,\chip(v)_k) \in \alpha(v)$.
\vspace{-6mm}
\end{enumerate}
\end{definition}

We will now describe how to unravel a well-formed partial tableau. Again, before giving the lengthy formal definition (Def.~\ref{d:onestepunravel}) of the one-step unravelling of a partial tableau, we give an intuitive explanation. 
Recall that $\b{A}_* = (A_*,R_*)$ denotes the ultrafilter frame of $\b A$ (Def.~\ref{d:dualframe}).
In a simple tableau construction, to unravel a node $v$, one would add successors for all $\diam$-formulas in $\Gamma_0 \cap \alpha(v)$ and label them by appropriate $R_*$-successors of $\alpha(v)$.
In order to treat eventuality formulas, we need to modify this construction in the following way.
Instead of using the successors of $\alpha(v)$ as labels of children of $v$, we make a `jump' in the ultrafilter frame $\b{A}_*$ from the point $\alpha(v)$ to a point $x_v$, guided by the first active eventuality formula, $\hs_m(\phi_m,\psi_m,\chi_m)$, in the list $\beta(v)$.
We will then label the children of $v$ not by $R_*$-successors of $\alpha(v)$, but by $R_*$-successors of $x_v$.
The precise choice of $x_v$ is guided by the relevance set $\rho_m$, and will ensure (i) that $\alpha(v)$ and $x_v$ have the same $\rho_m$-type, and (ii) that the negation of $\kappa(\alpha(v),\rho_m)$ can be added conjunctively to $\chip_m$, while keeping the partial tableau well-formed.
Such an $x_v$ will exist because of Lemma~\ref{l:jump}. The advantage of this construction is that $x_v$ will contain a stronger statement than $\hs_m(\phi_m,\psi_m,\chi_m)$, which will prevent that unwanted infinite loops occur in the construction (cf. Lemma~\ref{l:noloops} below).

\begin{definition}\label{d:onestepunravel}
We define the \emph{one-step unravelling} of a well-formed 
partial tableau $(T,\alpha,\beta)$. For each leaf $v$ of $T$, add a finite set of children of $v$, $C_v := \{w_\lambda \ | \ \diam \lambda \in \Gamma_0 \cap \alpha(v)\}$.\footnote{Note that $C_v \neq \emptyset$, since $\diam(\top \wedge \EU(\top,\top,\top)) \in \Gamma_0$ because $\Gamma_0$ is closed, and $\diam(\top \wedge \EU(\top,\top,\top)) = \top$ in $\b{A}$.} We will now specify a value for $\alpha$ and $\beta$ on each of these children. 

Fix a leaf $v$.\footnote{In the rest of this definition, we mostly suppress notation for $v$, and in particular write $\theta_k$, $\sigma_k$, $\rho_k$, etc. instead of $\theta(v)_k$, $\sigma(v)_k$, $\rho(v)_k$, etc.} To define the values of $\alpha$ and $\beta$ on $C_v$, we first choose an auxiliary ultrafilter $x_v \in \b{A}_*$.
If $\sigma_k \neq \a$ for all $1 \leq k \leq \ell(v)$, define $x_v := \alpha(v)$. Otherwise, put 
\[m := \min \{1 \leq k \leq \ell(v) \ | \ \sigma_k = \a\}.\]
We call $m$ the \emph{active index} at $v$.%
\footnote{If $m$ does not exist, proceed as in the case $\hs_m = AF$ for the definition of $\alpha$, and in the definition of $\beta$ act as if $m = \infty$.} 
By Def.~\ref{d:wellformed}(g), we have $\hs(\phi_m,\psi_m,\chip_m) \in \alpha(v)$. Therefore, by Lemma~\ref{l:jump}, pick $x_v \in \b{A}_*$ such that $\hs_m(\phi_m,\psi_m,\chip_m \wedge \neg \kappa(\alpha(v),\rho_m)) \in x_v$ and $x_v \sim_{\rho_m} \alpha(v)$. Write $\gamma_v := \kappa(\alpha(v),\rho_m)$.

Let $w_\lambda \in C_v$. We use $x_v$ to define $\alpha(w_\lambda)$ and $\beta(w_\lambda)$. For the definition of $\alpha(w_\lambda)$, the cases $\hs_m = \AF$ and $\hs_m = \EU$ diverge slightly.

\begin{itemize}
\item {\bf Case $\hs_m = \AF$.} Since $\diam \lambda \in \Gamma_0 \cap \alpha(v)$, we have $\diam \lambda \in x_v$, because $\Gamma_0 \subseteq \rho_m$ and $\alpha(v) \sim_{\rho_m} x_v$. Therefore, by Lemma~\ref{l:diamstep}, pick $\alpha(w_\lambda)$ such that $x_v {R_*} \alpha(w_\lambda)$ and $\lambda \in \alpha(w_\lambda)$.
\item {\bf Case $\hs_m = \EU$.} We do the same as in the previous case if $\lambda \neq \chi_m \wedge \EU(\phi_m,\psi_m,\chi_m)$. If $\lambda = \chi_m \wedge \EU(\phi_m,\psi_m,\chi_m)$, we do the following. By Def.~\ref{d:wellformed}(b) and $\sigma_m = \a$, we have $\phi_m \not\in \alpha(v)$. Since $\alpha(v) \sim_{\rho_m} x_v$ and $\phi_m \in \Gamma_0 \subseteq \rho_m$, we have $\phi_m \not\in x_v$, so $\neg \phi_m \in x_v$. Also, $\EU(\phi_m,\psi_m,\chip_m \wedge \neg \gamma_v) \in x_v$ by the choice of $x_v$. 
Applying the general fact (Proposition~\ref{p:EUcAFcfixpoint}) that $\EU(p,q,r) \wedge \neg p \leq \diam (r \wedge \EU(p,q,r))$, we obtain 
$\diam (\chip_m \wedge \neg \gamma_v \wedge \EU(\phi_m,\psi_m,\chip_m \wedge \neg \gamma_v)) \in x_v$. 
By Lemma~\ref{l:diamstep}, pick $\alpha(w_\lambda)$ such that $x_v{R_*}\alpha(w_\lambda)$ and $\chip_m \wedge \neg \gamma_v \wedge \EU(\phi_m,\psi_m,\chip_m \wedge \neg \gamma_v) \in \alpha(w_\lambda)$. Note that in particular $\chi_m \wedge \EU(\phi_m,\psi_m,\chi_m) \in \alpha(w_\lambda)$, since $\chip_m \wedge \neg \gamma_v \leq \chi_m$ and $\EU$ is monotone.
\end{itemize}
The word $\beta(w_\lambda)$ is defined as an update of the word $\beta(v)$, obtained by consecutively applying the following steps:
\begin{enumerate}
\item Let $\mathsf{New}(w_\lambda) := \{ \theta \in \alpha(w_\lambda) \cap \Gamma_0 \cap \mathrm{Ev}(\p) \ | \ \forall 1 \leq k \leq \ell(v) : \text{ if } \theta_k(v) = \theta, \text{ then } \sigma_k(v) = \e\}$\footnotemark
\footnotetext{Note that $\mathsf{New}(w_\lambda)$ is non-empty, because it always contains the formula $\EU(\top,\top,\top)$.}. 
%
For each $\theta = \hs(\phi,\psi,\chi) \in \mathsf{New}(w_\lambda)$, add one letter, $(\theta,\a,\rho',\chi)$, to the end of the word, where $\rho' := \bigcup_{k=1}^{\ell(v)} \rho_k$.
\item For each position $k$, put
\[ \chip(w_\lambda)_k = \begin{cases} \chip(v)_k &\mbox{if } k < m, \\
									  \chip(v)_m \wedge \neg \gamma_v & \mbox{if } k = m, \\
									  \chi(v)_k &\mbox{if } k > m.\end{cases} \]
\item For each position $k > m$, add the formula $\hs_m(\phi_m,\psi_m,\chip(v)_m \wedge \neg \gamma_v)$ to the set $\rho_k$.
\item For each position $k$ such that $\phi_k \in \alpha(w_\lambda)$, change $\sigma_k$ into $\e$.
\item For each position $k$, if $\theta_k = \EU(\phi_k,\psi_k,\chi_k)$ and $\lambda \neq \chi_k \wedge \EU(\phi_k,\psi_k,\chi_k)$, change $\sigma_k$ into $\e$.
\item For each position $k$, if $\hs_k = \AF$, $\psi_k \in \alpha(w_\lambda)$, and $\sigma_k = \a$, change $\sigma_k$ into $\f$.
\item For each position $k < m$, if $\hs_k = \AF$, $\sigma_k = \f$, $\phi_k \not\in \alpha(w_\lambda)$ and $\psi_k \not\in \alpha(w_\lambda)$, change $\sigma_k$ into $\a$.
\vspace{-6mm}
\end{enumerate}
\end{definition}

\begin{lemma}\label{l:wellformed}
The one-step unravelling of a well-formed partial tableau $T$ is well-formed. 
\end{lemma}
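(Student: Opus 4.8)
The plan is to verify that each of the seven well-formedness conditions (a)--(g) of Definition~\ref{d:wellformed} holds for the one-step unravelling. Since the unravelling leaves every old node and its labels untouched, (b)--(g) continue to hold there verbatim; the genuinely new content is (i) checking (a)--(g) at each freshly added child $w_\lambda$, and (ii) extending (a) across the new edges $v \preceq w_\lambda$, after which (a) for all pairs $u \preceq w_\lambda$ with $u \preceq v$ follows by transitivity from (a) for $T$. Throughout I split the positions $1 \le k \le \ell(w_\lambda)$ of $\beta(w_\lambda)$ into the \emph{old} ones ($k \le \ell(v)$, inherited from $\beta(v)$ and possibly modified by steps 2--7) and the \emph{new} ones (appended in step 1, with initial data $\sigma = \a$, $\chip = \chi$, $\rho = \rho'$), and I case on the position of $k$ relative to the active index $m$.

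Conditions (a), (d), (e), (b), (c) are immediate from inspecting the update. Step 1 only appends and steps 2--7 never touch the first coordinate $\theta_k$, giving (a). Only step 6 assigns status $\f$, and only when $\hs_k = \AF$, so no $\EU$-position is ever frozen, giving (d). For (e), the three clauses of step 2 each yield a $\chip(w_\lambda)_k$ that entails $\chi_k$ (using $\chip(v)_k \vdash \chi_k$ from (e) on $T$ for $k \le m$, and the reset $\chip(w_\lambda)_k = \chi_k$ for $k > m$), while new positions carry $\chip = \chi$. Condition (b) holds because step 4 sets $\sigma_k = \e$ at exactly the positions with $\phi_k \in \alpha(w_\lambda)$ and no later step revives them: step 6 acts only on active positions, and step 7 explicitly requires $\phi_k \notin \alpha(w_\lambda)$. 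Condition (c) holds because relevance sets only grow (step 3 adds, nothing deletes), so old positions inherit $\Gamma_0 \subseteq \rho$ from $T$, and each new $\rho' = \bigcup_k \rho(v)_k$ contains $\Gamma_0$ as well.

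For (f), fix $k' < k$ and examine $\hs_{k'}(\phi_{k'},\psi_{k'},\chip(w_\lambda)_{k'})$. If $k' < m$ the context is unchanged and the formula lies in $\rho(w_\lambda)_k \supseteq \rho(v)_k$ by (f) for $T$; if $k' = m$ the context becomes $\chip(v)_m \wedge \neg\gamma_v$, which is precisely the formula that step 3 inserts into every $\rho_k$ with $k > m$. The case $k' > m$ is where the design pays off: there step 2 resets $\chip(w_\lambda)_{k'} = \chi_{k'}$, so the formula in question is exactly $\theta_{k'} = \hs_{k'}(\phi_{k'},\psi_{k'},\chi_{k'})$, an eventuality formula in $\Gamma_0$, which lies in $\rho(w_\lambda)_k$ by (c). The same three-way split handles new positions $k$, whose relevance set $\rho'$ contains every old $\rho(v)_j$.

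The main obstacle is condition (g): at every surviving position ($\sigma(w_\lambda)_k \ne \e$) we must have $\hs_k(\phi_k,\psi_k,\chip(w_\lambda)_k) \in \alpha(w_\lambda)$. New positions are immediate, since their formula is $\theta_k \in \mathsf{New}(w_\lambda) \subseteq \alpha(w_\lambda)$ and they carry $\chip = \chi$. For surviving $\EU$-positions the needed formula is placed \emph{directly} into $\alpha(w_\lambda)$: the active index $k = m$ is handled by the explicit choice, in the $\EU$-case of the construction, of $\alpha(w_\lambda) \ni \EU(\phi_m,\psi_m,\chip_m \wedge \neg\gamma_v)$; for $k > m$, step 5 forces the child label to be $\lambda = \chi_k \wedge \EU(\phi_k,\psi_k,\chi_k) \le \theta_k = \hs_k(\phi_k,\psi_k,\chip(w_\lambda)_k)$, and $\lambda \in \alpha(w_\lambda)$ by construction; no surviving $\EU$-position has $k < m$, since by (d) it would be active, contradicting minimality of $m$. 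The real work is transporting $\AF$-formulas across the new edge. For a surviving $\AF$-position $k$ I first locate the required formula in the auxiliary ultrafilter $x_v$: when $k = m$ this is exactly what Lemma~\ref{l:jump} delivers, namely $\AF(\phi_m,\psi_m,\chip_m \wedge \neg\gamma_v) \in x_v$; when $k \ne m$ it follows from (g) for $T$, the relation $x_v \sim_{\rho_m}\alpha(v)$, and membership in $\rho_m$ (by (f) when $k < m$, or because the reset formula is $\theta_k \in \Gamma_0 \subseteq \rho_m$ when $k > m$, using there also monotonicity of $\AF$ in its third argument and $\chip(v)_k \vdash \chi_k$). Once $\AF(\phi_k,\psi_k,\chip(w_\lambda)_k) \in x_v$ is secured, I push it to $\alpha(w_\lambda)$ by the fixpoint identities: survival gives $\phi_k \notin \alpha(w_\lambda)$, and $\phi_k \notin x_v$ too (since $\phi_k \in \Gamma_0 \subseteq \rho_m$ and $\phi_k \notin \alpha(v)$ by (b)); the $\AF$-fixpoint then forces the $\Box\AR$ disjunct into $x_v$, whence $\AR(\psi_k \vee (\chip(w_\lambda)_k \wedge \AF),\phi_k) \in \alpha(w_\lambda)$, and the $\AR$-fixpoint together with $\phi_k \notin \alpha(w_\lambda)$ forces $\Box\AR \in \alpha(w_\lambda)$, giving $\AF(\phi_k,\psi_k,\chip(w_\lambda)_k) \in \alpha(w_\lambda)$. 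The degenerate case with no active index is the special case $x_v = \alpha(v)$. Checking that the freeze and revival steps (6) and (7) leave every surviving position in a state to which one of these arguments applies is the most error-prone bookkeeping, and is where I would concentrate the care.
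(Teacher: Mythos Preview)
Your proposal is correct and follows essentially the same approach as the paper: conditions (a)--(f) are dispatched by direct inspection of the update rules, and the real content is condition (g), handled by the same three-way case split on $k$ versus the active index $m$ together with the $\EU$/$\AF$ distinction. The only cosmetic difference is that for transporting $\AF$-formulas from $x_v$ to $\alpha(w_\lambda)$ the paper invokes a prepared lemma (Lemma~\ref{l:AFunravel}, giving $\AF(p,q,r)\wedge\neg p \le \Box\AF(p,q,r)$ in one step), whereas you unwind this via the $\AF$- and $\AR$-fixpoint identities explicitly; both routes are equivalent.
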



\begin{definition}\label{d:modelconstruction}
We define a tree $(S,R)$ with a $\p$-colouring $\sigma : S \to \mathcal{P}(p)$.
Since $\phi_0 \neq \bot$, pick an ultrafilter $x_0 \in \b{A}_*$ such that $\phi_0 \in x_0$. Define $(T_0,\alpha_0,\beta_0)$ to be the partial tableau whose underlying tree consists of a single node, $s_0$, and $\alpha_0(s_0) := x_0$. Choose a word $\beta_0(s_0)$ which orders (in an arbitrary manner) the set $\{(\hs(\phi,\psi,\chi),\a,\Gamma_0,\chi) \ | \ \hs(\phi,\psi,\chi) \in x_0 \cap \Gamma_0 \cap \mathrm{Ev}(\p), \phi \not\in x_0\}$. Note that $(T_0,\alpha_0,\beta_0)$ is a well-formed partial tableau.
Inductively define, for each $n \geq 0$, $(T_{n+1},\alpha_{n+1},\beta_{n+1})$ to be the one-step unravelling of the partial tableau $(T_n,\alpha_n,\beta_n)$. 

Let $(S,R)$ be the infinite, finitely branching tree $\bigcup_{n=0}^\infty T_n$. We have well-defined functions $\alpha := \bigcup_{n=0}^\infty \alpha_n$ from $S$ to $\b{A}_*$ and $\beta := \bigcup_{n=0}^\infty \beta_n$ from $S$ to $\Sigma^*$. For each $v \in S$, define $\sigma(v) := \alpha(v) \cap \p$.
\end{definition}
From the truth lemma below, it follows 
that $s_0 \forces \phi_0$, which  concludes the proof of Theorem~\ref{t:completeness}:
\begin{lemma}[Truth Lemma]\label{l:truthlemma}
For all $\theta \in \Gamma_0$ and $v \in S$, if $\theta \in \alpha(v)$, then $v \forces \theta$.
\end{lemma}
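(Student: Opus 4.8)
The plan is to prove the statement by induction on the structure of the formula $\theta\in\Gamma_0$, using that $\Gamma_0$ is closed under subformulas so that the induction hypothesis is available for every component of $\theta$ at every node of $S$. The literal and Boolean cases are immediate: if $\theta=p$ or $\theta=\neg p$ the claim follows from $\sigma(v):=\alpha(v)\cap\p$ and the fact that $\alpha(v)$ is an ultrafilter, while for $\theta=\phi\vee\psi$ and $\theta=\phi\wedge\psi$ one uses that $\alpha(v)$ is a prime filter. For $\theta=\diam\phi$, I would note that $\diam\phi\in\alpha(v)$ together with $\diam\phi\in\Gamma_0\subseteq\rho_m$ and $\alpha(v)\sim_{\rho_m}x_v$ gives $\diam\phi\in x_v$; since $\diam\phi\in\Gamma_0\cap\alpha(v)$ the child $w_\phi\in C_v$ exists and satisfies $\phi\in\alpha(w_\phi)$ by construction, so the induction hypothesis yields $w_\phi\forces\phi$. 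For $\theta=\Box\phi$, I would use the dual propagation fact — if $x R_* y$ and $\Box\phi\in x$ then $\phi\in y$ — applied to $\Box\phi\in x_v$: every child of $v$ in $S$ is a node of $C_v$ labelled by an $R_*$-successor of $x_v$, hence carries $\phi$ in its label, and the induction hypothesis gives $v\forces\Box\phi$.

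The four fixpoint operators are the substance of the lemma, and they divide into greatest fixpoints (safety) and least fixpoints (eventualities). For $\AR(\phi,\psi)$ the plan is a propagation-along-paths argument: from $\AR(\phi,\psi)\leq\phi\wedge(\psi\vee\Box\AR(\phi,\psi))$ (rule \eqref{ARfix}) and the closure condition $\Box\AR(\phi,\psi)\in\Gamma_0$, one shows that as long as $\psi$ does not appear in the labels, $\AR(\phi,\psi)$ — and therefore $\phi$ — persists in the label of every node along a path issuing from $v$. Given an arbitrary finite path $v=s_0,\dots,s_n$, a case split on whether $\psi\in\alpha(s_t)$ for some $t<n$ then reproduces exactly the semantic clause for $\AR$, with the induction hypothesis applied to the subformulas $\phi$ and $\psi$.

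The operator $\EG(\phi,\psi)$ is a greatest fixpoint whose unfolding $\EG(\phi,\psi)\leq\phi\wedge\diam\EU(\psi\wedge\EG(\phi,\psi),\phi)$ (rule \eqref{EGfix}) conceals an eventuality. To produce the required infinite witnessing branch, the plan is to realize the inner $\EU$ repeatedly: descend along a $\phi$-path to a node where $\psi\wedge\EG(\phi,\psi)$ holds, then iterate from that node, concatenating the resulting finite segments into an infinite branch on which $\phi$ holds everywhere and $\psi$ holds infinitely often. The crucial point is that $\EU(\psi\wedge\EG(\phi,\psi),\phi)$ lies in the Fischer--Ladner closure $\Gamma_0$ but is \emph{not} a subformula of $\EG(\phi,\psi)$, so its realization cannot be supplied by the structural induction; it must come from the construction itself. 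This is precisely what the eventuality cases deliver: for $\EU(\phi,\psi,\chi)$ and $\AF(\phi,\psi,\chi)$ the plan is to use the status bookkeeping in the labels $\beta$ — the active/frozen/extinguished flags, the relevance sets, and the contextual strengthenings of $\chi$ — to show that an eventuality present in $\alpha(v)$ becomes extinguished (its target $\phi$ reached) after finitely many steps, and then to read off the witnessing path (for $\EU$) or the uniform discharge over all fair branches (for $\AF$).

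The hard part is this finiteness of discharge, and it is where the entire tableau machinery is aimed. I expect to rely here on the separate no-loops property (Lemma~\ref{l:noloops}), which the construction is designed to guarantee: each jump from $\alpha(v)$ to $x_v$ strictly strengthens the context formula $\chip_m$ by conjoining $\neg\kappa(\alpha(v),\rho_m)$ (possible by Lemma~\ref{l:jump}), so that an active eventuality cannot stay active forever and must eventually be extinguished. Among the eventuality cases I anticipate $\AF$ to be the most delicate, since it is a universally quantified eventuality: the discharge of $\phi$ must be shown uniform over all $\psi$-fair branches, which is exactly what the freeze/unfreeze bookkeeping in steps (6)--(7) of the one-step unravelling is there to control. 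By contrast, the literal, Boolean, modal, and $\AR$ cases are comparatively mechanical.
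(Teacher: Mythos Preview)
Your proposal is correct and follows essentially the same route as the paper's proof: structural induction on $\theta$, with the modal cases handled via the $\rho_m$-equivalence $\alpha(v)\sim_{\rho_m}x_v$, the greatest-fixpoint cases $\AR$ and $\EG$ by label-level propagation (the latter reusing the $\EU$ construction rather than the induction hypothesis, exactly as you note), and the eventuality cases $\EU$ and $\AF$ discharged by appeal to Lemma~\ref{l:noloops}. The only cosmetic difference is that the paper argues $\Box$ and $\AR$ by contraposition where you argue directly, which is immaterial.
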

The proof of the truth lemma is, as usual, by induction on $\theta$. The induction makes use of the following crucial fact, showing that eventualities are 
always extinguished (i.e. fulfilled) or, in the case of $\AF$, ultimately frozen along any branch: 
\begin{lemma}\label{l:noloops}
For all $v \in S$, $1 \leq k \leq \ell(v)$, and infinite $R$-paths $(v_t)_{t=0}^\infty$ with $v_0 = v$, there exists $t \geq 0$ such that 
\begin{itemize}
\item if $\hs(v)_k = \EU$, then $\sigma(v_t)_k = \e$.
\item if $\hs(v)_k = \AF$, then either $\sigma(v_t)_k = \e$, or for all $t' \geq t$, $\sigma(v_{t'})_k = \f$.
\end{itemize}
\end{lemma}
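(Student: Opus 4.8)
I would fix the infinite path $(v_t)_{t=0}^\infty$ once and for all and argue \emph{by induction on the position $k$}. Assume the claim for all $k' < k$. Since the status $\e$ is absorbing (no clause of Definition~\ref{d:onestepunravel} moves a letter out of $\e$) and a permanently frozen letter is never active, the induction hypothesis produces a tail of the path along which \emph{no} position $k' < k$ is active. On this tail the active index $m$, whenever it exists, satisfies $m \geq k$; hence step~3 of the unravelling never augments $\rho(v_t)_k$ and step~1 only appends fresh letters, so the relevance set $\rho(v_t)_k$ is constant, equal to a fixed finite $\rho^*$, and whenever $k$ is active it \emph{is} the active index $m$. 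Because there are at most $2^{|\rho^*|}$ distinct $\sim_{\rho^*}$-types, it will suffice to exhibit infinitely many pairwise $\sim_{\rho^*}$-inequivalent ultrafilters $\alpha(v_t)$ among the active nodes, using Lemma~\ref{l:charform} to read type-inequivalence off membership of characteristic formulas.

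\textbf{Case $\hs(v)_k = \EU$.} Suppose for contradiction $k$ is never extinguished on the tail. By Definition~\ref{d:wellformed}(d) it is then active at every tail node, so $m = k$ throughout and the jump fires at each step. By step~5 the path is forced into the distinguished child with $\lambda = \chi_k \wedge \EU(\phi_k,\psi_k,\chi_k)$ (any other child sets $\sigma_k = \e$); for that child the construction records $\chip(v_{t+1})_k = \chip(v_t)_k \wedge \neg\gamma_{v_t}$ together with $\chip(v_{t+1})_k \in \alpha(v_{t+1})$, where $\gamma_{v_t} := \kappa(\alpha(v_t),\rho^*)$. Iterating, $\neg\gamma_{v_s} \in \alpha(v_t)$ for all earlier tail indices $s < t$, i.e. $\alpha(v_t) \not\sim_{\rho^*} \alpha(v_s)$ by Lemma~\ref{l:charform}, yielding infinitely many distinct types and the desired contradiction.

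\textbf{Case $\hs(v)_k = \AF$.} Suppose for contradiction that on the tail $k$ is neither extinguished nor permanently frozen; then it is active infinitely often. I claim that \emph{at every active node $v$ one has $\chip(v)_k \in \alpha(v)$}. Granting this, the proof finishes as before: the context is strengthened by $\neg\gamma_v$ at each active node (step~2, case $k = m$) and left unchanged between active nodes (case $k < m$), so membership of $\chip(v)_k$ forces pairwise distinct $\sim_{\rho^*}$-types at the active nodes, again contradicting finiteness. To prove the claim I track the obligation $\AF(\phi_k,\psi_k,\chip(v)_k)$, which lies in $\alpha(v)$ at every non-extinguished node by Definition~\ref{d:wellformed}(g). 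Unfolding it through its least-fixpoint identity (Proposition~\ref{p:EUcAFcfixpoint}) — noting $\phi_k \notin \alpha(v)$ since $k$ is not extinguished — and using $\AR(a,b) \leq a$, every child $w$ of the node at which the jump is taken satisfies $\psi_k \vee \chip(v)_k \in \alpha(w)$. At any node where $k$ stays active or is unfrozen (step~7), the governing conditions give $\psi_k \notin \alpha(w)$, whence $\chip(v)_k = \chip(w)_k \in \alpha(w)$, as claimed.

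The delicate point, and the step I expect to be the \emph{main obstacle}, is carrying this obligation across a \emph{frozen} stretch: there the jump at each node is guided by a higher-priority active index $m' > k$, so the children are $R_*$-successors of an auxiliary ultrafilter $x_v$ with only $x_v \sim_{\rho(v)_{m'}} \alpha(v)$, not $x_v \sim_{\rho^*} \alpha(v)$. Here the bookkeeping of Definition~\ref{d:wellformed}(f), maintained precisely by step~3, is what is needed: it guarantees $\AF(\phi_k,\psi_k,\chip(v)_k) \in \rho(v)_{m'}$, so this formula survives the jump into $x_v$ and its consequence $\psi_k \vee \chip(v)_k$ reaches every child. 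Assembling these observations shows $\chip(v)_k \in \alpha(v)$ at every active node reached after a freeze, which completes the type-distinctness argument and hence the contradiction. Verifying that (f) and step~3 interact correctly through jumps guided by \emph{other} eventualities — and that the $\AF$ fixpoint unfolding applies to the strengthened context $\chip$ rather than the original $\chi$ — is the technical heart of the case.
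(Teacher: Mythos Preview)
Your approach is essentially the paper's: induction on $k$ to obtain a tail on which the relevance set $\rho(v_t)_k$ is frozen to some finite $\rho^*$, followed by a pigeonhole argument showing that the active occurrences of position $k$ along the tail carry pairwise distinct $\rho^*$-types. The paper organizes the $\AF$ case slightly differently --- it fixes two active indices $u < u+d$ and splits on whether $m_{u+d-1}$ equals $k$ or exceeds it --- but the substance is identical, and you correctly isolate well-formedness~(f) together with step~3 of Definition~\ref{d:onestepunravel} as the mechanism that carries the strengthened obligation $\AF(\phi_k,\psi_k,\chip(v)_k)$ across a jump guided by a higher-priority eventuality $m' > k$.

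One small imprecision worth tightening: your claim ``at every active node $v$ one has $\chip(v)_k \in \alpha(v)$'' need not hold at the \emph{first} active node on the tail (nothing in the construction forces the inherited context to lie in that ultrafilter), and your equation $\chip(v)_k = \chip(w)_k$ fails precisely when $v$ itself is active, since then step~2 strengthens the context. Both are harmless: at an active $v$ the jump already places the \emph{strengthened} formula $\AF(\phi_k,\psi_k,\chip(v)_k \wedge \neg\gamma_v)$ in $x_v$, so the unfolding delivers $\chip(w)_k$ rather than merely $\chip(v)_k$ to the child; and losing the first active node costs at most one in the type count, which does not affect finiteness.
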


\subsection{Completeness for rooted and binary fair CTL}\label{ss:compvariants}
We conclude the section by stating the completeness results for our variants of $\CTLf$. The proofs are mild modifications of the model construction given in Subsection~\ref{ss:model} above.

\begin{theorem}\label{t:rootedcompleteness}
For every consistent $\CTLfp$-formula $\phi_0(\p)$, there exists a $\p$-coloured tree such that for some node $s$, $s \forces \phi_0$.
\end{theorem}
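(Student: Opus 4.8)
The plan is to run the tableau construction of Subsection~\ref{ss:model} essentially verbatim, modifying only the choice of seed ultrafilter and adding a treatment of the new constant $\I$. Fix a consistent $\CTLfp$-formula $\phi_0(\p)$; as in the $\CTLf$ case, consistency yields a rooted $\CTLf$-algebra $(\b{A},\I)$ together with a valuation $V$ such that $\phi_0 \neq \bot$, and I adopt Convention~\ref{c:phi}. I take $\Gamma_0$ to be the Fischer-Ladner closure of $\{\phi_0,\EU(\phi_0,\top),\I\}$, so that $\I \in \Gamma_0$ (hence every occurrence of $\I$ inside $\phi_0$ is tracked) and the eventuality $\EU(\phi_0,\top)$ lies in $\Gamma_0 \cap \mathrm{Ev}(\p)$.

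For the seed I pick an ultrafilter $x_0 \in \b{A}_*$ with $\I \in x_0$, which exists because the axiom $\I \neq \bot$ guarantees that $\I$ generates a proper filter. The third axiom for $\I$, instantiated at $a := \phi_0 \neq \bot$, gives $\I \leq \EU(\phi_0,\top)$, so $\EU(\phi_0,\top) \in x_0$ as well. I then run Definition~\ref{d:modelconstruction} starting from this $x_0$; the initial word $\beta_0(s_0)$ now activates $\EU(\phi_0,\top)$ whenever $\phi_0 \notin x_0$. Lemma~\ref{l:noloops} forces this eventuality to be extinguished along every branch, which via the Truth Lemma (Lemma~\ref{l:truthlemma}) yields a node forcing $\phi_0$; in the degenerate case $\phi_0 \in x_0$ the root itself forces $\phi_0$. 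The statements and proofs of Lemmas~\ref{l:wellformed} and~\ref{l:noloops}, and the inductive steps of the Truth Lemma, are insensitive to the presence of $\I$ and carry over unchanged.

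The one genuinely new point is that the structural semantics of $\I$ (by Definition~\ref{d:semantics2}, forced at exactly the root $s_0$) must be reconciled with the labelling $\alpha$. Concretely, I extend the Truth Lemma with base cases for the literals $\I$ and $\neg\I$, for which it suffices to show that $\I \in \alpha(v)$ holds precisely when $v = s_0$. One inclusion is immediate from $\I \in x_0 = \alpha(s_0)$. For the other, I prove that no $R_*$-successor ultrafilter can contain $\I$: if $x{R_*}y$ and $\EU(\I,\top) \in y$, then $\diam\EU(\I,\top) \in x$ by definition of $R_*$, contradicting the axiom $\diam\EU(\I,\top) = \bot$ since $x$ is proper; and because the fixpoint axiom gives $\I \leq \EU(\I,\top)$, upward closure of $y$ rules out $\I \in y$ as well. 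Since every non-root node $v$ receives for $\alpha(v)$ an $R_*$-successor of the jump point $x_{v'}$ of its parent $v'$ (Definition~\ref{d:onestepunravel}), this shows $\I \notin \alpha(v)$ for all $v \neq s_0$, as required.

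With these base cases in place the Truth Lemma, and hence the theorem, follow exactly as in the proof of Theorem~\ref{t:completeness}. I expect the main difficulty to be bookkeeping rather than conceptual: one must check that inserting $\I$, and in negation normal form also $\neg\I$, into $\Gamma_0$ and into the grammar of Definition~\ref{d:nnf} disturbs neither the well-formedness conditions of Definition~\ref{d:wellformed} nor the $\rho$-type jumps of Lemma~\ref{l:jump}. Since $\I$ is a constant with no proper subformulas and is never itself an eventuality, it participates only as an inert literal, so these checks are routine.
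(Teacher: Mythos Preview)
Your argument is correct, but the paper takes a shorter route that avoids re-opening the tableau construction. The paper observes that in any rooted $\CTLf$-algebra the formula
\[
\I \wedge \EU(\phi_0,\top) \wedge \Box\AR(\neg\I,\bot)
\]
equals $\I$ (using $\I \leq \EU(\phi_0,\top)$ from the third $\I$-axiom and $\I \leq \Box\AR(\neg\I,\bot)$ from the second), hence is consistent. It then applies Theorem~\ref{t:completeness} directly to this compound formula, treating $\I$ as an inert propositional letter. The conjunct $\Box\AR(\neg\I,\bot)$, via the already-established $\AR$ case of the Truth Lemma, forces $\I$ to be false at every non-root node, so the resulting tree is automatically a legitimate rooted model; the conjunct $\EU(\phi_0,\top)$ supplies the node where $\phi_0$ holds.

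The difference is one of packaging. You argue \emph{semantically} that no $R_*$-successor can contain $\I$ (from the axiom $\diam\EU(\I,\top)=\bot$) and then patch the Truth Lemma with new base cases; the paper instead encodes the same constraint \emph{syntactically} via $\Box\AR(\neg\I,\bot)$ and lets the existing Truth Lemma do the work. The paper's reduction is cleaner and requires no re-verification of the tableau machinery, while your approach makes the role of each $\I$-axiom more transparent. Either way the argument goes through.
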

%
%
\begin{theorem}\label{t:binarycompleteness}
For every consistent $\CTLfpb$-formula $\phi_0(\p)$, there exists a $\p$-colouring $\sigma$ of the full binary tree such that for some node $s$, $s \forces \phi_0$.
\end{theorem}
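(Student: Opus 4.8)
The plan is to obtain Theorem~\ref{t:binarycompleteness} by adapting the tableau construction of Subsection~\ref{ss:model} and its rooted refinement behind Theorem~\ref{t:rootedcompleteness}, with one essential change: every node is given exactly two children, routed through the deterministic operations $\X_0$ and $\X_1$, so that the underlying tree can be identified with $2^*$. Fix a binary rooted $\CTLf$-algebra $\b{A}$ and a valuation witnessing $\phi_0 \neq \bot$. As in the rooted case, I would first record two consequences of the axioms. Since $\I \leq \EU(\I,\top)$ and $\diam\EU(\I,\top)=\bot$, monotonicity gives $\diam \I = \bot$, and then $\diam a = \X_0 a \vee \X_1 a$ forces $\X_0 \I = \X_1 \I = \bot$; hence $\I$ can never belong to a child's label, which guarantees that $\I$ holds exactly at the root. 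Since $\I \neq \bot$ and, for $\phi_0 \neq \bot$, $\I \leq \EU(\phi_0,\top)$, any ultrafilter $x_r \ni \I$ satisfies $\EU(\phi_0,\top) \in x_r$; so starting the construction from such an $x_r$ with $\EU(\phi_0,\top)$ recorded as an active eventuality will, by the no-loops Lemma~\ref{l:noloops}, force $\phi_0$ at the node where that eventuality is extinguished.

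The new algebraic ingredient is that each $\X_i$ is a Boolean endomorphism, so its inverse image $\X_i^*(x) := \{a \ | \ \X_i a \in x\}$ is an ultrafilter, giving a total deterministic map $\X_i^* : \b{A}_* \to \b{A}_*$. From $\X_i a \leq \X_0 a \vee \X_1 a = \diam a$ one checks $x \mathrel{R_*} \X_i^*(x)$, so these maps produce genuine $R_*$-successors. Moreover, primeness of ultrafilters together with $\diam a = \X_0 a \vee \X_1 a$ yields the witnessing property: $\diam a \in x$ iff $a \in \X_i^*(x)$ for some $i$, and dually $\Box a \in x$ iff $a \in \X_i^*(x)$ for both $i$.

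I would then modify the one-step unravelling (Definition~\ref{d:onestepunravel}) as follows. For a leaf $v$, named by a finite binary word, compute the auxiliary ultrafilter $x_v$ exactly as before, via the jump of Lemma~\ref{l:jump} guided by the first active eventuality; but instead of adding one child per $\diam$-formula, add exactly the two children $v\cdot 0$ and $v\cdot 1$ and set $\alpha(v\cdot i) := \X_i^*(x_v)$. The $\beta$-update is inherited from the rooted construction, the only adjustment being that the designated witnessing child for an active $\EU$-eventuality is now an index $i$ for which the continuation $\chip_m \wedge \neg\gamma_v \wedge \EU(\phi_m,\psi_m,\chip_m \wedge \neg\gamma_v)$ lies in $\X_i^*(x_v)$ (such $i$ exists since this formula's $\diam$ lies in $x_v$), the eventuality being extinguished in the other child. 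The truth lemma (Lemma~\ref{l:truthlemma}) then extends routinely: the $\diam$ and $\Box$ cases use the witnessing property above, and the new case $\X_i\psi \in \alpha(v)$ uses that $\X_i\psi \in \Gamma_0 \subseteq \rho_m$ is preserved by the jump, so $\X_i\psi \in x_v$ and hence $\psi \in \X_i^*(x_v) = \alpha(v\cdot i)$; here one extends negation normal form to push negations through $\X_i$ using $\X_i \neg a = \neg \X_i a$. Since every node then has precisely the two children $v\cdot 0, v\cdot 1$, the tree $\bigcup_n T_n$ is identified with $2^*$ under $f_i(v) = v\cdot i$, and the colouring $\sigma(v) = \alpha(v)\cap\p$ is the desired colouring of the full binary tree.

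The main obstacle I anticipate is checking that the eventuality-fulfillment machinery survives the forced binary branching, i.e.\ that Lemma~\ref{l:noloops} still holds. In the original construction a separate child could be tailored to each $\diam$-formula, whereas now both $\EU$-progress and the universal $\AF$-obligations must be carried by the same two children. For $\EU$ one must verify that the context-strengthening conjunct $\neg\gamma_v$ is correctly inherited by the unique witnessing child, so that the same `no unwanted loops' argument via strictly refined $\rho_m$-types applies, and that extinguishing the eventuality in the non-witnessing child preserves well-formedness (Definition~\ref{d:wellformed}); for $\AF$ one must check that the freeze/reactivate bookkeeping of the $\beta$-update behaves correctly when the $\Box\AR(\cdots)$-obligation is simultaneously pushed to both children. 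Once these local well-formedness and progress checks are in place, the proof concludes exactly as for Theorem~\ref{t:rootedcompleteness}.
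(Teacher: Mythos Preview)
Your proposal is correct and follows essentially the same route as the paper: modify the one-step unravelling so that each leaf gets exactly the two children $\X_0^*(x_v)$ and $\X_1^*(x_v)$ (the paper obtains these dual functions via Sahlqvist correspondence rather than your direct construction from the Boolean-endomorphism axioms, but the result is the same), choose a designated child for the active $\EU$-eventuality, and re-run the well-formedness and no-loops arguments. Your treatment of $\I$ is slightly more direct than the paper's---the paper instead wraps $\phi_0$ in $\I\wedge \EU(\phi_0,\top)\wedge \Box\AR(\neg\I,\bot)$ as in Theorem~\ref{t:rootedcompleteness}---and the paper explicitly augments the Fischer-Ladner closure with the rule ``if $\diam\phi\in\Gamma$ then $\X_0\phi,\X_1\phi\in\Gamma$'' and adds a bookkeeping clause re-registering an $\EU$-eventuality that was extinguished in a non-designated child but still holds there; these are exactly the local checks you flag as the ``main obstacle''.
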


\section{Model companions}\label{s:modelcompanions}
The aim of this section is to exhibit model companions for the universal theories $\CTLfp$ and $\CTLfpb$ (Def.~\ref{d:ctlfp}) of rooted $\CTLf$-algebras and rooted binary $\CTLf$-algebras. 

We first recall the formal definition of model companion from model theory. For more conceptual background on the notion of model companion, we refer to the introduction of this paper and, e.g., \cite[Section 3.5]{CK} and \cite{Whe1976}.

\begin{definition}\label{d:modelcompanion}
A first-order theory $T^*$ is \emph{model-complete} if every formula is equivalent over $T^*$ to an existential formula\footnote{ 
In fact, it is sufficient that every \emph{universal} formula is equivalent over $T^*$ to an existential one, see~\cite[Thm.~3.5.1]{CK}.
}. 

A first-order theory $T^*$ is a \emph{co-theory} of a first-order theory $T$ if every model of $T$ embeds into a model of $T^*$, and vice versa.

Let $T$ be a universal theory. An extension $T^* \supseteq T$ is a \emph{model companion} of $T$ iff $T^*$ is a model-complete co-theory of $T$.
\end{definition}
It can be shown~\cite[Section 3.5]{CK} that a model companion $T^*$ - whenever it exists - is unique and axiomatizes the class of models of $T$ which are \emph{existentially closed} for $T$. Recall that a $T$-model $M$ is existentially closed for $T$ iff, whenever an existential formula $\phi$ with parameters from $M$ holds in a $T$-model $M' \supseteq M$, then $\phi$ holds in $M$ itself.

A remark on notation is in place. In this section, we will mainly be concerned with the \emph{first-order theory} of rooted $\CTLf$-algebras. We will denote the (functional) first-order language of rooted $\CTLf$-algebras by $\mathcal{L}$. Thus, $\mathcal{L}$ has function symbols $\bot,\vee,\neg,\I,\diam,\EU$, and $\EG$, one relation symbol, $=$, and the usual first-order connectives. In contrast with the previous section, the word `formula' (or `$\mathcal{L}$-formula') will here refer to a first-order formula in this language $\mathcal{L}$, and we will use lower case Greek letters $\phi$, $\psi$, etc. for these. To avoid any possible confusion, in this section we refer to $\CTLf$-formulas as \emph{$\mathcal{L}$-terms}, and we use lower case Roman letters $t$, $u$, etc. for these.\footnote{Note that propositional connectives such as $\bot$, $\neg$, $\vee$, etc. can have two distinct meanings when they occur in an $\mathcal{L}$-formula: they are used to build $\mathcal{L}$-terms, as in, e.g., $\neg \I \vee p$, but they are also symbols of the first-order meta-language, as in, e.g., $\neg(p = q)$. Thus, the two occurrences of `$\neg$' in the $\mathcal{L}$-formula $\neg(\top = (\neg \I \vee p))$ have different meanings. In practice, we will parenthesize carefully to avoid confusion.}

A straight-forward but important observation about the theory $\CTLfp$ is that quantifier-free formulas reduce to equations, or inequations.
\begin{lemma}\label{l:justequations}
For any quantifier-free $\mathcal{L}$-formula $\phi(\overline{p})$, there exists an $\mathcal{L}$-term $t_\phi(\overline{p})$ such that $\mathrm{CTL}^f_\mathrm{I} \proves \phi \leftrightarrow (t_\phi = \top)$; similarly, there exists an $\mathcal{L}$-term $t_\phi'(\overline{p})$ such that $\mathrm{CTL}^f_\mathrm{I} \proves \phi \leftrightarrow (t_\phi' \neq \bot)$.
\end{lemma}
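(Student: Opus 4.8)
The plan is to prove both claims together by structural induction on the quantifier-free formula $\phi$, exploiting the fact that in a $\CTLf$-algebra the Boolean reduct lets us encode all the propositional connectives of the first-order meta-language inside $\mathcal{L}$-terms. The two statements are dual under the De Morgan translation $t_\phi' := \neg t_\phi$, since $t_\phi = \top$ iff $\neg t_\phi = \bot$ over any Boolean algebra; so it suffices to construct $t_\phi$ and then set $t_\phi' := \neg t_\phi$.

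First I would handle the atomic case. An atomic $\mathcal{L}$-formula is an equation $u = w$ between two $\mathcal{L}$-terms. Over a Boolean algebra, $u = w$ holds if and only if the symmetric difference vanishes, equivalently if and only if the biconditional term is $\top$; so I set
\[ t_{(u = w)} := (\neg u \vee w) \wedge (\neg w \vee u), \]
and then $\CTLfp \proves (u = w) \leftrightarrow (t_{(u=w)} = \top)$ is just the standard fact that $x = y \leftrightarrow x \leftrightarrow y = \top$ in Boolean algebras, which follows from axiom group (i). Next I would treat the inductive cases for the propositional connectives of the meta-language. For negation, if $\phi = \neg \chi$ and we already have $t_\chi$ with $\phi \leftrightarrow (t_\chi = \top)$ replaced appropriately, I use the second form of the induction hypothesis: $\neg \chi$ holds iff $t_\chi \neq \top$ iff $\neg t_\chi \neq \bot$, and I take $t_{\neg\chi} := t_\chi'$ from the dual statement — this is exactly why the two claims must be proved simultaneously. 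For conjunction $\phi = \chi_1 \wedge \chi_2$, I set $t_\phi := t_{\chi_1} \wedge t_{\chi_2}$, using that a meet of two elements equals $\top$ iff both equal $\top$; for disjunction I dually set $t_\phi' := t_{\chi_1}' \wedge t_{\chi_2}'$ and recover $t_\phi := \neg t_\phi'$, or directly observe that $t_{\chi_1} \vee t_{\chi_2} = \top$ need not correspond to the meta-disjunction, which is the one subtle point.

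The main obstacle, and the step deserving the most care, is precisely the interaction between meta-level disjunction and the term-level join. A naive guess $t_{\chi_1 \vee \chi_2} := t_{\chi_1} \vee t_{\chi_2}$ is \emph{wrong}: in a nontrivial Boolean algebra one can have $a \vee b = \top$ without either $a = \top$ or $b = \top$, so this term does not track the meta-disjunction. The clean way around this is to keep negation and conjunction as the primitive inductive cases (together with the atomic case) and define disjunction via $\chi_1 \vee \chi_2 = \neg(\neg \chi_1 \wedge \neg \chi_2)$ at the meta-level, so that only the negation and conjunction constructions above are ever invoked. Equivalently, one puts $\phi$ into a meta-level negation normal form first. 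Provided the induction is organized this way, each step is a one-line Boolean identity provable from axiom group (i) of Definition~\ref{d:ctlf}, and no use of the modal or fixpoint axioms is needed at all — the lemma is genuinely about the Boolean reduct, so the construction of $t_\phi$ and $t_\phi'$ goes through uniformly and the $\CTLfp$-provability of the biconditionals reduces entirely to equational Boolean-algebra reasoning.
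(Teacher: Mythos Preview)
Your argument has a genuine gap at the negation step, and the claim that ``no use of the modal or fixpoint axioms is needed at all'' is false.

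The proposed duality $t_\phi' := \neg t_\phi$ does not do what you say. From $\phi \leftrightarrow (t_\phi = \top)$ and $t_\phi = \top \leftrightarrow \neg t_\phi = \bot$ you only get $\phi \leftrightarrow (\neg t_\phi = \bot)$, i.e.\ $\phi \leftrightarrow (t_\phi' = \bot)$, which is the \emph{wrong} sign. Consequently the simultaneous induction never gets off the ground: already for the atomic formula $\phi \equiv (p = \top)$ you would need a term $t'(p)$ with $(p = \top) \leftrightarrow (t'(p) \neq \bot)$, and no purely Boolean term (even allowing the constant $\I$) achieves this. Concretely, in the four-element rooted $\CTLf$-algebra $\mathbb{P}(\{0,1\})$ with $R = \{(0,1),(1,1)\}$ and $\I = \{0\}$, any Boolean polynomial $t'(p)$ in $p$ and $\I$ that vanishes at $p = \emptyset,\{0\},\{1\}$ must vanish identically, so it cannot be nonzero at $p = \top$. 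The same obstruction blocks the negation clause: from $t_\chi \neq \top$ there is no Boolean route to an equation $s = \top$.

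What rescues the lemma---and what the paper's proof actually does---is the rooted structure. The axioms for $\I$ give the equivalence
\[
a \neq \top \quad\Longleftrightarrow\quad \I \leq \EU(\neg a,\top),
\]
which converts a \emph{negated} atomic statement into a \emph{positive} equation. One then sets $t_{\neg\psi} := \neg\I \vee \EU(\neg t_\psi,\top)$ for the negation step, and only afterwards defines $t'_\phi := \I \wedge \neg\EU(\neg t_\phi,\top)$. So the lemma is not about the Boolean reduct at all; it hinges on the specifically $\CTLfp$-axioms for $\I$ together with $\EU$.
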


\subsection{$\CTLfp$ has a model companion: proof outline}\label{ss:proofoutline}
We shall construct, in Subsection~\ref{ss:Tstar}, a first-order theory that we call $(\CTLfp)^*$, and prove (Thm.~\ref{t:main1}) that $(\CTLfp)^*$ is the model companion of $\CTLfp$. In this subsection we give a general outline of the proof.\\

\intpart{Construction of the theory $(\CTLfp)^*$}
For the theory $(\CTLfp)^*$ to be model-complete, we will need that any universal formula is equivalent over $(\CTLfp)^*$ to an existential one. By Lemma~\ref{l:justequations}, any universal formula is equivalent over $\CTLfp$ to a universal formula of the particular form $\forall \x \, t(\p,\x) = \top$, where $t$ is an $\mathcal{L}$-term. 
We will construct, for each such special universal formula $\phi(\p)$, an existential formula $\psi(\p)$ with the following two properties:
\begin{enumerate}
\item[(I)] $\CTLfp \proves \forall \p (\psi(\p) \rightarrow \phi(\p))$, and
\item[(II)] any rooted $\CTLf$-algebra with $\p$-parameters extends to a model where $\phi(\p) \to \psi(\p)$ is true.
\end{enumerate}
The formula $\psi(\p)$ with these two properties will allow us to construct the model companion of $\CTLfp$. \\

\intpart{Construction of $\psi$} We now outline the construction of the existential formula $\psi(\p)$ mentioned in the construction of the theory $(\CTLfp)^*$ above. 
For this, we use the back-and-forth translation between formulas of the modal $\mu$-calculus and automata by Janin and Walukiewicz \cite{JanWal1995,JanWal1996}. The process will go in three steps:
\begin{enumerate}
\item[Step 1.] From a first-order $\mathcal{L}$-formula $\phi(\p)$ to a monadic second order formula $\Phi(\p)$ (Prop.~\ref{p:termtoMSO});
\item[Step 2.] From a  monadic second order formula $\Phi(\p)$ to a non-deterministic modal automaton $\mathcal{A}$, which describes the behaviour of $\Phi(\p)$ on $\omega$-expansions of trees (Prop.~\ref{p:MSOtoaut}); 
\item[Step 3.] Back from the automaton $\mathcal{A}$ to an $\mathcal{L}$-term $\acc_{\mathcal{A}}(\p,\q)$ describing the automaton (Prop.~\ref{p:auttoterm}).
\end{enumerate} 
The $\mathcal{L}$-term $\acc_{\mathcal{A}}(\p,\q)$, once the variables $\q$ corresponding to the states of the automaton are existentially quantified, is transformed into the existential formula 
$\psi(\p):= \exists \q\,(\acc_{\mathcal{A}}(\p,\q) =\top)$. This will be the existential formula $\psi$ mentioned in the construction of $(\CTLfp)^*$ above.

\subsection{Obtaining an existential formula using automata}\label{ss:exform}
In this subsection we make the construction of $\psi$, outlined in the previous subsection, precise. For this purpose, we first recall the definitions of $\omega$-expansions and fix the notation that we use for $\MSO$. After this, we give the technical results underlying Step 1 -- 3 in the construction of $\psi$.\\

\intpart{$\omega$-expansions of trees}
The following definition actually works for transition systems in general, cf. \cite[Def.~1]{JanWal1996}, but we only need it for trees.

\begin{definition}
Let $(S,R)$ be a tree with root $s_0$. The \emph{$\omega$-expansion}, $(S_\omega,R_\omega)$, of $(S,R)$ is the tree which is defined as follows:
\begin{align*}
& S_\omega := \{(k_1,s_1) \dots (k_n,s_n) \in (\omega \times S)^* \ | \  s_i {R} s_{i+1}~(0 \leq i < n)\},\\
& R_\omega[(k_1,s_1) \cdots (k_n,s_n)] := \{(k_1,s_1)\cdots(k_n,s_n)(k_{n+1},s_{n+1}) \ : 
          \\ &~~~~~~~~~~~~~~~~~~~~~~~~~~~~~~~~~~~~~~~~~~~~~~~~ :
              \ k_{n+1} \in \omega, s_n {R} s_{n+1}\}.
\end{align*}
We denote the empty sequence by $\epsilon$; note that $\epsilon$ is the root of $(S_\omega,R_\omega)$. Also note that the definition of $S_\omega$ requires in particular that, if $(k_1,s_1) \dots (k_n,s_n) \in S_\omega$, then $s_0{R}s_1{R}\dots{R}s_n$ is a finite path in $S$ starting at the root.

For any $\p$-colouring $\sigma$ of a tree $(S,R)$ with root $s_0$, define the $\p$-colouring $\sigma_\omega$ of $(S_\omega,R_\omega)$ by $\sigma_\omega(\epsilon) := \sigma(s_0)$, and 
\[\sigma_\omega((k_1,s_1)\dots(k_n,s_n)) := \sigma(s_n).\vspace{-5mm}\]
\end{definition}

It is straight-forward to prove that 
any $\p$-coloured tree is bisimilar to its $\omega$-expansion via a back-and-forth morphism. Stating this in algebraic terms, we have in particular:

\begin{proposition}\label{p:subalgebra}
For any $\p$-coloured tree $(S,\sigma)$, the algebra $\b{P}(S)$ is isomorphic to a subalgebra of $\b{P}(S_\omega)$, via an isomorphism which in particular sends $V_\sigma(p)$ to $V_{\sigma_\omega}(p)$ for each $p$ in $\p$.
\end{proposition}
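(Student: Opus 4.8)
The plan is to prove Proposition~\ref{p:subalgebra} by constructing the isomorphism explicitly from the back-and-forth bisimulation between $(S,\sigma)$ and its $\omega$-expansion $(S_\omega,\sigma_\omega)$, and then verifying that this bisimulation induces an injective $\CTLf$-algebra homomorphism $\b{P}(S) \to \b{P}(S_\omega)$. First I would define the projection map $\pi : S_\omega \to S$ sending a sequence $(k_1,s_1)\cdots(k_n,s_n)$ to its last node $s_n$ (and $\epsilon$ to $s_0$). By the very definition of $R_\omega$ and of $\sigma_\omega$, this $\pi$ is a \emph{p-morphism} (bounded morphism) of $\p$-coloured transition systems: it is surjective (since $S$ is a tree, every node is reachable and has a finite path from the root that lifts to $S_\omega$), it preserves colours ($\sigma_\omega = \sigma \circ \pi$), it preserves the transition relation (the forth condition, immediate from the definition of $R_\omega$), and it satisfies the back condition (any $R$-successor $s_{n+1}$ of $\pi(w) = s_n$ lifts, for any choice of $k_{n+1} \in \omega$, to an $R_\omega$-successor of $w$).

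The key step is to show that taking preimages under $\pi$ yields the desired embedding. I would define $h : \mathcal{P}(S) \to \mathcal{P}(S_\omega)$ by $h(a) := \pi^{-1}[a]$. Since $\pi$ is surjective, $h$ is injective, and being a preimage map it automatically preserves all Boolean operations ($\emptyset$, $\cup$, complement), so $h$ is an embedding of Boolean algebras. That $h$ commutes with $\diam$, i.e. $h(\diam_R a) = \diam_{R_\omega} h(a)$, is exactly the statement that $\pi$ is a p-morphism: the forth condition gives one inclusion and the back condition gives the other. This is the standard fact that p-morphisms between Kripke frames induce modal-algebra homomorphisms between the complex algebras in the opposite direction. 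Finally, by the definition of $V_\sigma$ and $V_{\sigma_\omega}$ together with $\sigma_\omega = \sigma \circ \pi$, we get $h(V_\sigma(p)) = \pi^{-1}[\{s : p \in \sigma(s)\}] = \{w : p \in \sigma_\omega(w)\} = V_{\sigma_\omega}(p)$ for each $p \in \p$, which is the required compatibility with valuations.

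The main obstacle is extending the homomorphism property from the modal reduct to the operators $\EU$ and $\EG$, which are defined as least pre-fixpoints and greatest post-fixpoints rather than by a first-order frame condition. The clean way around this is \emph{not} to argue algebraically with fixpoints, but to use Proposition~\ref{p:compalg}: for any $\CTLf$-formula $\phi$ and any node $w \in S_\omega$, a straightforward induction on $\phi$ using the fact that $\pi$ is a p-morphism shows $w \forces \phi$ in $(S_\omega,\sigma_\omega)$ iff $\pi(w) \forces \phi$ in $(S,\sigma)$ — the cases for $\EU$ and $\EG$ follow because $\pi$ maps $R_\omega$-paths from $w$ bijectively (by forth) onto $R$-paths from $\pi(w)$, with colours and return conditions preserved, and the back condition guarantees every path in $S$ is realized upstairs. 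Since $\EU_R(a,b) = \EU(x,y)^{\b{P}(S)}$ for the characteristic colourings of $a,b$, this truth-preservation yields $h(\EU_R(a,b)) = \EU_{R_\omega}(h(a),h(b))$ and likewise for $\EG$, so $h$ is a full $\CTLf$-algebra embedding. Thus the image of $h$ is a subalgebra isomorphic to $\b{P}(S)$, completing the proof.
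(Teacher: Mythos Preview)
Your proof is correct and follows the same approach as the paper: the projection $\pi$ (called $z$ in Lemma~\ref{l:omegaunravel}) is shown to be a surjective p-morphism, and the preimage map $\pi^{-1}$ gives the embedding, with preservation of $\EU$ and $\EG$ justified either by bisimulation-invariance of modal $\mu$-formulas (the paper's phrasing) or by the equivalent path-lifting/truth-preservation argument you spell out. One minor slip: $\pi$ does not map $R_\omega$-paths from $w$ \emph{bijectively} onto $R$-paths from $\pi(w)$ (different choices of the $k_i$ give distinct lifts of the same path), but surjectivity---which follows from the back condition and is all you need---suffices.
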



\intpart{$\MSO$ on trees}
We use the following (reduced) \emph{syntax of monadic second order logic}  $\MSO$. The \emph{atomic formulas} of $\MSO$
are of the form $p \subseteq q$ and $R(p,q)$ where $p$, $q$ are variables; arbitrary formulas are obtained from atomic formulas using the connectives $\vee$, $\neg$ and $\exists p$. This syntax suffices to express all of $\MSO$, cf., e.g., \cite[p. 7]{Tho1996} or \cite[Ch. 12]{GTW2002}. In particular, 
we use the abbreviation $p=q$ for $(p\subseteq q)\wedge (q\subseteq p)$ and we use the convention that lower case letters $v, v', 
\dots$ stand for `individual variables', whose interpretation is forced to be a singleton. For a first-order variable $v$ and a second-order variable $p$, we write `$v \in p$' to mean `$v \subseteq p$'.

As for the semantics, we will only consider interpretations of  $\MSO$ over  trees
$(S,R)$: given an $\MSO$ formula $\Phi(\p)$ and a $\p$-colouring $\sigma : S \to \mathcal{P}(\p)$ with associated valuation $V_{\sigma} : \p \to \mathcal{P}(S)$, 
the relation $S, \sigma \models_{\MSO} \Phi(\p)$ is defined in the usual way, i.e., for atomic formulas we have
\begin{align*}
 S,\sigma \models_{\MSO} p\subseteq q~~ \iff &~ V_{\sigma}(p)\subseteq V_{\sigma}(q)\\
 S,\sigma \models_{\MSO} R(p, q) \iff &~ R\cap (V_{\sigma}(p)\times V_{\sigma}(q)) \neq \emptyset,
\end{align*}
and this definition is extended to arbitrary $\MSO$-formulas.\\

\intpart{Step 1: From $\FO$ to $\MSO$} The essence of the following proposition is the so-called `standard translation' from modal fixpoint logic to monadic second-order logic.
\begin{proposition}\label{p:termtoMSO}
For any first-order $\mathcal{L}$-formula $\phi(\p)$, there exists a monadic second order formula $\Phi(\p)$ such that, for any $\p$-coloured tree $(S,\sigma)$, 
\[ \b{P}(S),V_\sigma \models_{\FO} \phi(\p) \iff S, \sigma \models_{\MSO} \Phi(\p).\]
\end{proposition}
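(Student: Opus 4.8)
The plan is to construct the $\MSO$ formula $\Phi(\p)$ by recursion on the structure of the first-order $\mathcal{L}$-formula $\phi(\p)$, reducing everything to the case of $\mathcal{L}$-terms via the standard translation of modal fixpoint logic into monadic second-order logic. First I would handle the purely first-order connectives: since $\MSO$ is closed under $\vee$, $\neg$, and $\exists$ (and $=$ is expressible by Lemma~\ref{l:justequations} and the abbreviations noted in the $\MSO$ syntax paragraph), it suffices to translate the atomic $\mathcal{L}$-formulas. By Lemma~\ref{l:justequations}, every quantifier-free $\mathcal{L}$-formula is provably equivalent over $\CTLfp$ to a single equation of the form $t = \top$, and first-order quantifiers $\exists p$ over algebra elements correspond exactly to second-order quantifiers $\exists p$ over subsets of $S$ in the complex algebra $\b{P}(S)$ (here the bijection between $\p$-colourings and valuations, noted before Proposition~\ref{p:compalg}, is what makes the two semantics match). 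So the whole problem reduces to producing, for an arbitrary $\mathcal{L}$-term $t(\p)$, an $\MSO$ formula $\mathrm{St}(t)(v,\p)$ with one free individual variable $v$ such that, for any $\p$-coloured tree and any node $s$, we have $s \in t^{\b{P}(S)}(V_\sigma(\p))$ iff $S,\sigma \models_{\MSO} \mathrm{St}(t)(s,\p)$; then $t = \top$ translates to $\forall v\, \mathrm{St}(t)(v,\p)$.

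The core of the argument is the definition of the standard translation $\mathrm{St}(t)$ by induction on the $\mathcal{L}$-term $t$, following Proposition~\ref{p:compalg}. The Boolean cases ($\bot$, $\neg$, $\vee$) and the propositional-variable case ($p$ translating to $v \in p$) are immediate, and $\I$ translates to the $\MSO$-definable assertion that $v$ is the root. The modal case $\diam t$ translates to $\exists v'\,(R(v,v') \wedge \mathrm{St}(t)(v',\p))$, using Definition~\ref{d:semantics}. The genuinely interesting cases are the fixpoint operators $\EU$ and $\EG$. Here I would \emph{not} try to express the least/greatest fixpoints by an explicit second-order quantification over fixpoints (although that is possible in $\MSO$); instead I would translate them directly using their path semantics from Definition~\ref{d:semantics}: $\EU(t_1,t_2)$ asserts the existence of a finite path along which $\mathrm{St}(t_2)$ holds until $\mathrm{St}(t_1)$ holds, and $\EG(t_1,t_2)$ asserts the existence of an infinite path on which $\mathrm{St}(t_1)$ holds everywhere and $\mathrm{St}(t_2)$ holds infinitely often. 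Both of these are standard $\MSO$-expressible properties on trees: a finite path is coded by a finite linearly-$R$-ordered set, and an infinite path by an infinite such set, with ``infinitely often'' expressed via the usual $\MSO$ idiom that every node on the path has a later node on the path satisfying the property.

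The correctness of this translation, i.e. the biconditional in the statement, follows by a routine induction on $t$ once the translation is set up, invoking Proposition~\ref{p:compalg} to identify forcing with membership in the interpreted term, and matching each semantic clause of Definition~\ref{d:semantics} with its $\MSO$ counterpart. The main obstacle, and the step requiring the most care, is the fixpoint case: one must verify that the \emph{path-based} semantics of $\EU$ and $\EG$ genuinely coincides with their \emph{algebraic} least-/greatest-fixpoint definition in $\b{P}(S)$ from Definition~\ref{d:ctlfalgebra}(iv). This coincidence is precisely the content of Proposition~\ref{p:compalg}, which already establishes that the complex-algebra interpretation of any $\CTLf$-formula agrees with the forcing relation; so the delicate point is merely to make sure the $\MSO$ codings of ``finite path'', ``infinite path'', and ``infinitely often'' are correct and that the restriction to trees (rather than arbitrary transition systems) causes no difficulty, which it does not, since on a tree the unique path from the root gives a clean handle on path quantification. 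I expect the verification for $\EG$, with its combined ``always $t_1$ / infinitely often $t_2$'' Büchi-style condition, to be the most technical part, but it is nonetheless a standard $\MSO$ encoding.
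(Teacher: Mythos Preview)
Your proposal is correct and follows the same overall architecture as the paper --- define a term-level translation $t \mapsto \dot{t}(v,\p)$ and then lift it to atomic formulas and through the first-order connectives --- but you diverge from the paper at exactly the point you flag. You choose to translate $\EU$ and $\EG$ via their \emph{path semantics} (encoding finite paths, infinite paths, and ``infinitely often'' in $\MSO$), explicitly declining the fixpoint route; the paper does precisely the opposite, translating $\EU(t_1,t_2)$ as ``$v$ lies in every set $q$ with $t_1 \vee (t_2 \wedge \diam q) \leq q$'' and $\EG(t_1,t_2)$ as ``$v$ lies in some set $p$ with $p \leq t_1 \wedge \diam \EU(t_2 \wedge p, t_1)$'', i.e.\ a direct second-order rendering of the least-/greatest-fixpoint clauses in Definition~\ref{d:ctlfalgebra}(iv). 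The paper's choice makes the correctness induction for the key equivalence $w \in t^{\b{P}(S)} \Leftrightarrow S,\sigma[v\mapsto w] \models \dot t$ essentially a tautology, since the $\MSO$ clause literally restates the algebraic definition in $\b{P}(S)$; no appeal to Proposition~\ref{p:compalg} or to path encodings is needed. Your route is equally valid but trades one verification for another: you offload the fixpoint/path coincidence onto Proposition~\ref{p:compalg} (already proved) and instead have to check that your $\MSO$ encodings of ``finite $R$-path'', ``infinite $R$-path'', and ``infinitely often along a path'' are correct on trees --- routine, as you note, but a bit more to write out. A minor point: your detour through Lemma~\ref{l:justequations} to reduce atomic formulas to $t=\top$ is unnecessary; the paper simply sends $t_1 = t_2$ to $\forall v\,(\dot{t_1}(v) \leftrightarrow \dot{t_2}(v))$ directly.
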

\begin{proof} (Sketch) We just show how $\Phi$ is built up.
We first inductively define, for any $\mathcal{L}$-term $t(\p)$, an MSO-formula $\dot{t}(\p,v)$, where $v$ is a fresh first-order variable. 
The base case and the cases for the function symbols other than $\EU$ and $\EG$ are treated as follows:
\begin{multicols}{2}
\begin{itemize}
\item $\dot{p_i} := v \in p_i$,
\item $\dot{(t_1 \vee t_2)} := (\dot{t_1}(v)) \vee (\dot{t_2}(v))$,
\item $\dot{(\neg t_1)} := \neg (\dot{t_1}(v))$,
\item $\dot{\bot} := \neg (v = v)$,
\item $\dot{\diam t} := \exists v' (R(v,v') \wedge \dot{t}(\p,v'))$,
\item $\dot{\I} := \forall v' (\neg R(v',v)).$
\end{itemize}
\end{multicols}
Before defining $\dot{\EU(t_1,t_2)}$ and $\dot{\EG(t_1,t_2)}$, first define the auxiliary formula:
\begin{align*}
& \mathsf{Pre}_{t_1,t_2}(p,q) := 
\\ &~~~~~~~ \forall v' \left( ([\dot{t_1}(v') \wedge p(v')] \vee [\dot{t_2}(v') \wedge R(v',q)]) \to (v' \in q) \right)~.
\end{align*}
Note that $\mathsf{Pre}_{t_1,t_2}(p,q)$ is true in a transition system $S$ exactly if $(t_1 \wedge p) \vee (t_2 \wedge \diam q) \leq q$ holds in the algebra $\mathbb{P}(S)$.

We now define:
\[ \dot{\EU(t_1,t_2)} := \forall q \left( \mathsf{Pre}_{t_1,t_2}(v'=v',q) \to q(v)\right),\]
in words: $\EU(t_1,t_2)$ is forced in $v$ iff $v$ lies in all sets $q$ for which $t_1 \vee (t_2 \wedge \diam q) \leq q$.

We also define:
\begin{equation*}
\dot{\EG(t_1,t_2)} := \exists p \left(
\begin{aligned}
&  (v \in p) \wedge \forall v' [(v' \in p) \to \\
&(\dot{t_1}(v') \wedge \exists v''[R(v',v'')  \\
&\wedge \forall q \left(\mathsf{Pre}_{t_2,t_1}(p,q) \to (v'' \in q)\right)] )]
\end{aligned}
\right)
\end{equation*}
In words: $\EG(t_1,t_2)$ is forced in $v$ iff $v$ lies in some set $p$ such that $p \leq t_1 \wedge \diam \EU(t_2 \wedge p, t_1)$ holds.


Now, for any $\mathcal{L}$-formula $\phi(\p)$, define $\Phi(\p)$ by replacing any atomic formula $t_1 = t_2$ by $\forall v (\dot{t_1}(\p,v) \leftrightarrow \dot{t_2}(\p,v))$. 
\end{proof}

\intpart{Step 2: From $\MSO$ to automata} We recall the relevant definitions and results from \cite{JanWal1995,JanWal1996}. The details will be relevant in Step 3 as well.
\begin{definition}\label{d:automaton}
Fix a finite set $\p$ of propositional variables. A \emph{non-deterministic modal automaton over $\p$} is a tuple $\mathcal{A} = (Q,q_0,\delta,\Omega)$, where $Q$ is a finite set of states, $q_0 \in Q$ is an initial state, $\delta : Q \times \mathcal{P}(\p) \to \mathcal{P}\mathcal{P}(Q)$ is a transition function, and $\Omega : Q \to \omega$ is a parity function.

Let $(S_\omega,\sigma_\omega)$ be the $\omega$-expansion of a $\p$-coloured tree $(S,\sigma)$. A \emph{successful run} of the automaton  $\mathcal{A}$ on $(S_\omega,\sigma_\omega)$ (also known as \emph{$\mathcal{A}$-labelling} in \cite{AgoHol2000}) is a function $r : S_\omega \to Q$ such that:
\begin{enumerate}
\item (Initial) $r(\epsilon) = q_0$,
\item (Transition) for all $v \in S_\omega$, the set $\{r(v') \ | \ v{R}v'\}$ is in $\delta(r(v),\sigma_\omega(v))$,
\item (Success) for any infinite path $(v_t)_{t=0}^\infty$ in $S_\omega$ with $v_0 = \epsilon$, the parity
\[ \min \{\Omega(q) \ | \ r(v_t) = q \text{ for infinitely many } t \in \omega\}\]
is even.
\end{enumerate}

We say that $\mathcal{A}$ \emph{accepts} $(S_\omega,\sigma_\omega)$ if there exists a successful run of $\mathcal{A}$ on $(S_\omega,\sigma_\omega)$.
\end{definition}
Note that we only gave the definition of acceptance of an \emph{$\omega$-expanded tree}. We do not need the more involved acceptance condition for general trees.

\begin{proposition}\label{p:MSOtoaut}
For any monadic second order formula $\Phi(\p)$, there exists a non-de\-ter\-mi\-nistic modal automaton $\mathcal{A}_\Phi$ over $\p$ such that, for any $\p$-coloured tree $(S,\sigma)$, 
\[ (S_\omega,\sigma_\omega) \models \Phi(\p) \iff \mathcal{A}_{\Phi} \text{ accepts } (S_\omega,\sigma_\omega). \]
\end{proposition}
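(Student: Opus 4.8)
The statement to prove is Proposition~\ref{p:MSOtoaut}: for any $\MSO$ formula $\Phi(\p)$, there is a non-deterministic modal automaton $\mathcal{A}_\Phi$ over $\p$ such that $(S_\omega,\sigma_\omega) \models \Phi(\p)$ iff $\mathcal{A}_\Phi$ accepts $(S_\omega,\sigma_\omega)$. This is essentially the Janin–Walukiewicz correspondence between bisimulation-invariant $\MSO$ and the modal $\mu$-calculus, specialized to $\omega$-expanded trees. My plan is therefore to \emph{not} reprove the deep automata-theoretic theorem from scratch, but to cite \cite{JanWal1995,JanWal1996} and carefully perform the bookkeeping needed to land in precisely the class of automata of Definition~\ref{d:automaton}. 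The subtlety is that $\MSO$ over trees is strictly more expressive than bisimulation-invariant $\MSO$, so the passage to a \emph{modal} (bisimulation-invariant) automaton must exploit the fact that we only evaluate on $\omega$-expansions.

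\textbf{Key steps.} First I would recall that over the $\omega$-expansion $(S_\omega,\sigma_\omega)$, the relevant semantics is bisimulation-invariant: by Proposition~\ref{p:subalgebra} the tree $(S,\sigma)$ is bisimilar to $(S_\omega,\sigma_\omega)$, and more importantly $S_\omega$ has the key structural feature that every node has infinitely many ``$\omega$-copies'' of each successor, so that $S_\omega$ is saturated in the sense required by the Janin–Walukiewicz construction. Second, I would invoke the theorem of \cite{JanWal1996} that every $\MSO$ formula, when interpreted on trees up to bisimulation, is equivalent to a formula of the modal $\mu$-calculus, and that every $\mu$-calculus formula can be translated into a non-deterministic modal (parity) automaton of exactly the shape $(Q,q_0,\delta,\Omega)$ with $\delta : Q \times \mathcal{P}(\p) \to \mathcal{P}\mathcal{P}(Q)$. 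Third, I would verify that the notion of ``successful run'' used in \cite{JanWal1996} matches Definition~\ref{d:automaton} on $\omega$-expanded inputs: the transition clause asking that $\{r(v') \mid v R v'\} \in \delta(r(v),\sigma_\omega(v))$ is precisely the one-step modal move (a set of successor states must be realizable), and the parity success condition is identical. The only point requiring care is that $\MSO$ is \emph{not} bisimulation-invariant in general, so the equivalence ``$(S_\omega,\sigma_\omega)\models\Phi \iff \mathcal{A}_\Phi$ accepts'' holds on the specific class of $\omega$-expanded trees, where bisimulation-invariance is automatically present because the $\omega$-expansion absorbs the difference between $\MSO$ and its bisimulation-invariant fragment.

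\textbf{Main obstacle.} The crux is aligning the automaton model. Janin and Walukiewicz use $\mu$-automata whose acceptance on an arbitrary tree involves a two-player game with a universal/existential split at transitions; but Definition~\ref{d:automaton} above deliberately gives only the simpler acceptance condition for $\omega$-expanded trees, explicitly remarking ``we do not need the more involved acceptance condition for general trees.'' The work is to confirm that on $\omega$-expansions these two acceptance notions coincide: because each node of $S_\omega$ has, for each $R$-successor in $S$, infinitely many children, the existential choices of successor-state multisets in the $\mu$-automaton can always be witnessed by an actual function $r : S_\omega \to Q$ satisfying the local condition of Definition~\ref{d:automaton}(2), and conversely any such $r$ induces a winning strategy. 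I expect this ``run vs.\ strategy'' reconciliation on $\omega$-expanded inputs to be the delicate step; everything else is citation plus routine translation of notation. I would conclude by noting that, since $\Phi$ has free second-order variables $\p$ treated as colours (labels) rather than quantified, the construction is uniform in the colouring, which is exactly what is needed for the subsequent Step~3.
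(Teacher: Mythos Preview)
Your proposal is correct and follows essentially the same route as the paper: cite \cite[Lem.~12]{JanWal1996} to pass from $\Phi$ to a $\mu$-calculus formula $\Phi^\vee$ equivalent on $\omega$-expansions, use bisimilarity of $(S,\sigma)$ and $(S_\omega,\sigma_\omega)$ to move $\Phi^\vee$ across, and then cite \cite{JanWal1995} for the translation to a non-deterministic modal automaton. The ``run vs.\ strategy'' alignment you flag as the main obstacle is exactly what the paper dispatches by the additional citation to \cite[Sec.~2]{AgoHol2000} and the remark after Definition~\ref{d:automaton}; you are right that this is the only point needing care, but the paper treats it as known rather than reproving it.
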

\begin{proof}
By \cite[Lem.~12]{JanWal1996}, there is a formula $\Phi^\vee(\p)$ of the modal $\mu$-calculus such that for every $\p$-coloured tree $(S,\sigma)$, 
\[(S_\omega,\sigma_\omega) \models \Phi \iff (S,\sigma) \models \Phi^\vee.\]
Since any $\p$-coloured tree is bisimilar to its $\omega$-expansion,
we also have
\[ (S, \sigma) \models \Phi^\vee \iff (S_\omega,\sigma_\omega) \models \Phi^\vee.\]
By the results in \cite{JanWal1995} (also see, e.g., \cite[Sec.~2]{AgoHol2000}), there is a non-de\-ter\-mi\-nistic modal automaton $\mathcal{A}_\Phi$ such that 
\[(S_\omega,\sigma_\omega) \models \Phi^\vee \iff \mathcal{A}_{\Phi} \text{ accepts } (S_\omega,\sigma_\omega).\vspace{-5mm}\]
\end{proof}

\intpart{Step 3: From automaton to term}
Here, we use the fact that the language of $\CTLfp$ is expressive enough to express the acceptance condition of automata on $\omega$-expanded trees. In particular, we  need the binary $\AF$ connective of $\CTLf$ for the term $\acc_3$ in the proof.
\begin{proposition}\label{p:auttoterm}
For any non-deterministic modal automaton $\mathcal{A}$ over $\p$ with set of states $\q$, there exists an $\mathcal{L}$-term $\acc_{\mathcal{A}}(\p,\q)$ such that for any $\p$-coloured tree $(S,\sigma)$, we have
\[ \mathcal{A} \text{ accepts } (S_\omega,\sigma_\omega) \iff \b{P}(S_\omega),V_{\sigma_\omega} \models \exists \q \; \acc_{\mathcal{A}}(\p,\q) = \top. \]
\end{proposition}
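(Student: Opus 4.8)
The plan is to encode a run of $\mathcal{A}$ as a $\q$-valuation and to design $\acc_{\mathcal A}$ as a conjunction of four $\mathcal L$-terms, each of which evaluates to $\top$ (i.e.\ is forced at every node of $S_\omega$) exactly when the corresponding clause of Definition~\ref{d:automaton} holds. Writing $\q = \{q_1,\dots,q_m\}$ for the state variables, a valuation $W : \q \to \mathcal P(S_\omega)$ that partitions $S_\omega$ corresponds bijectively to a function $r : S_\omega \to Q$ via $r(v) = q_j \iff v \in W(q_j)$, so that the existential quantifier $\exists\q$ amounts to guessing the run $r$. I set $\acc_{\mathcal A} := \acc_0 \wedge \acc_1 \wedge \acc_2 \wedge \acc_3$, so that $\acc_{\mathcal A} = \top$ under $W$ iff all four conjuncts are forced everywhere; I arrange these to be equivalent, respectively, to ``$r$ is a well-defined total function'' and to the Initial, Transition, and Success conditions.

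The first three conjuncts are routine modal bookkeeping. For $\acc_0$ I take $\bigl(\bigvee_j q_j\bigr) \wedge \bigwedge_{j < j'} \neg(q_j \wedge q_{j'})$, which forced everywhere says exactly one state holds at each node. For $\acc_1$ I take $\neg\I \vee q_0$: since $\I$ is forced only at the root $\epsilon$, this is forced everywhere iff $\epsilon \in W(q_0)$, i.e.\ $r(\epsilon) = q_0$. For $\acc_2$ I abbreviate, for $U \subseteq Q$, the term $\mathrm{Tr}_U := \bigwedge_{q' \in U} \diam q' \wedge \Box\bigl(\bigvee_{q' \in U} q'\bigr)$, which by seriality is forced at $v$ iff the set of states of the children of $v$ is exactly $U$; and for $c \subseteq \p$ the colour term $\pi_c := \bigwedge_{p \in c} p \wedge \bigwedge_{p \notin c} \neg p$. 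Then $\acc_2 := \bigwedge_{q \in Q,\, c \subseteq \p} \bigl[(q \wedge \pi_c) \to \bigvee_{U \in \delta(q,c)} \mathrm{Tr}_U\bigr]$, which forced everywhere says that at each node $v$ the children-state-set lies in $\delta(r(v),\sigma_\omega(v))$; when $\delta(q,c) = \emptyset$ the empty disjunction $\bot$ correctly forbids state $q$ at a $c$-coloured node.

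The heart of the proof, and the only place where the fair operators are needed, is the Success term $\acc_3$. Write $\theta_i := \bigvee_{\Omega(q) = i} q$ for ``the current priority is $i$'', with $\theta_{\geq i}$, $\theta_{<i}$, $\theta_{\neq i}$ the evident disjunctions. A priori this seems problematic: $\CTLf$ offers only single-constraint fairness ($\EG$ and its dual $\AF$), whereas a parity condition mentions all priorities simultaneously. The reconciling observation is that the universal quantification over branches distributes over a conjunction. Since priorities are finite, each branch has a well-defined least-infinitely-often priority, so ``all branches accept'' equals ``no branch has odd least-infinitely-often priority'', which equals $\bigwedge_{i \text{ odd}} (\text{no branch has least-infinitely-often priority } i)$. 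I claim a branch has least-infinitely-often priority exactly $i$ iff it eventually reaches a node from which some infinite path keeps $\theta_{\geq i}$ always and $\theta_i$ infinitely often; hence at $\epsilon$ the term $\EU(\EG(\theta_{\geq i},\theta_i),\top)$ is forced iff some branch has least-infinitely-often priority $i$, and its negation is $\AR(\AF(\theta_{<i},\theta_{\neq i}),\bot)$, using $\AF(a,b) = \neg\EG(\neg a,\neg b)$. I therefore set
\[ \acc_3 := \bigwedge_{i \text{ odd}} \AR\bigl(\AF(\theta_{<i},\theta_{\neq i}),\,\bot\bigr). \]
Being a conjunction of $\AR(-,\bot)$-terms, $\acc_3$ is forced everywhere iff it is forced at $\epsilon$, which by the claim is exactly the Success condition on all branches from the root.

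The main obstacle is verifying the claim underlying $\acc_3$: that $\EG(\theta_{\geq i},\theta_i)$ placed under $\EU(-,\top)$ pins the least-infinitely-often priority to $i$. One direction uses that on a rejecting branch with least value $i$ the lower priorities eventually stop occurring, yielding a tail on which $\theta_{\geq i}$ holds throughout while $\theta_i$ recurs, i.e.\ a witness for $\EG(\theta_{\geq i},\theta_i)$ reachable from $\epsilon$; the converse uses that a finite prefix cannot change which priorities occur infinitely often, so such a witness assembles into a single branch with least-infinitely-often priority exactly $i$. Granting the claim together with the three bookkeeping conjuncts, $\acc_{\mathcal A} = \top$ under $W$ holds iff the associated $r$ is a successful run; quantifying existentially over $\q$ then yields $\b P(S_\omega), V_{\sigma_\omega} \models \exists\q\,(\acc_{\mathcal A} = \top)$ iff $\mathcal A$ accepts $(S_\omega,\sigma_\omega)$, as required.
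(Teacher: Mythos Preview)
Your construction is correct and follows essentially the same approach as the paper: both encode a run as a $\q$-valuation and express Success as a conjunction, over odd priorities $n$, of the single fair-$\CTL$ term $\AF(\theta_{<n},\neg\theta_n)$. Your outer $\AR(-,\bot)$ wrapper is harmless but redundant, since the requirement $\acc_{\mathcal A}=\top$ already forces the term at every node; the paper simply takes $\acc_3=\bigwedge_{n\text{ odd}}\AF\bigl(\bigvee_{\Omega(q')<n}q',\,\bigwedge_{\Omega(q)=n}\neg q\bigr)$ directly (and folds your partition term $\acc_0$ into its $\acc_2$).
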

\begin{proof} (Sketch)
We encode acceptance conditions into an $\mathcal{L}$-term $\acc_{\mathcal{A}}(\p,\q)$.
We define the following auxiliary terms for $D \in \mathcal{P}(\q)$ and $\alpha \in \mathcal{P}(\p)$:
\[ \nabla D := \bigwedge_{q \in D} \diam q \wedge \Box \left(\bigvee_{q \in D} q\right) \quad \text{ and } \quad \odot\!\alpha := \bigwedge_{p \in \alpha} p \wedge \bigwedge_{p\not\in\alpha}\neg p.\]
Now  the required $\mathcal{L}$-term $\acc_{\mathcal{A}}(\p,\q)$ is taken to be $\acc_1 \wedge \acc_2 \wedge \acc_3$, where  
\begin{align*}
\acc_1(\p,\q) &:= \neg \I \vee q_0,\\
\acc_2(\p,\q) &:= \bigvee_{q \in \q}\left( 
  \begin{aligned}
    & q \wedge \bigwedge_{q' \in \q \setminus\{q\}} \neg q' \wedge 
    \\ &
    \bigvee\left\{\nabla D \wedge \odot \alpha \ | \  \alpha \in \mathcal{P}(\p), D \in \delta(q,\alpha)\right\}
  \end{aligned}
\right),\\
\acc_3(\p,\q) &:= \bigwedge \left\{\AF\left(\bigvee_{\Omega(q')<n}q',\bigwedge_{\Omega(q)=n}\neg q\right) \right\},
\end{align*}
where the last conjunction is taken over the set of the odd numbers
 $n$ that belongs to the range of $\Omega$. 
%
%
\end{proof}

\subsection{The model companion of $\CTLfp$}\label{ss:Tstar}

Let
$J$ be the set of triples 
$(t,\p,\x)$ such that  $\p$  and $\x$ are  disjoint finite sets of variables and $t$ is an $\mathcal{L}$-term in variables  $\p \cup \x$.
For each tuple $j = (t,\p,\x) \in J$, define the first-order $\mathcal{L}$-formula
\[ \phi_j(\p) := \forall \x \; t(\p,\x) = \top,\]
and let $\Phi_j(\p)$ be the monadic second-order formula given by Proposition~\ref{p:termtoMSO}.
%
Define the first-order $\mathcal{L}$-formula
\begin{equation}\label{eq:psij}
 \psi_j(\p) := \exists \q \; \mathsf{acc}_{\mathcal{A}_{\Phi_j}}(\p,\q) = \top,
\end{equation}
where $\mathcal{A}_{\Phi_j}$ is the non-deterministic automaton corresponding to the MSO formula $\Phi_j$, by Proposition~\ref{p:MSOtoaut}. 

Finally, define the first-order $\mathcal{L}$-theory
\[ (\CTLfp)^* := \CTLfp \cup \{\forall \p (\phi_j \to \psi_j) \ | \ j \in J\}.\]


We now come to our main theorem.
\begin{theorem}\label{t:main1}
$(\CTLfp)^*$ is the model companion of $\CTLfp$.
\end{theorem}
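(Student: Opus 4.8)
The plan is to verify the two defining properties of a model companion from Definition~\ref{d:modelcompanion}: that $(\CTLfp)^*$ is a \emph{co-theory} of $\CTLfp$, and that it is \emph{model-complete}. Since $(\CTLfp)^* \supseteq \CTLfp$ by construction, every model of $(\CTLfp)^*$ is already a model of $\CTLfp$; so for the co-theory property the real content is that every rooted $\CTLf$-algebra $\b A$ embeds into a model of $(\CTLfp)^*$. Here I would invoke property~(II) from Subsection~\ref{ss:proofoutline}, which asserts that any rooted $\CTLf$-algebra with $\p$-parameters extends to a model where $\phi_j \to \psi_j$ holds. By iterating this extension over all $j \in J$ and taking a suitable chain/union (using that the axioms $\phi_j \to \psi_j$ are of the form ``universal implies existential'', hence preserved under unions of chains once satisfied), I would build an extension $\b A \hookrightarrow \b B$ satisfying every axiom $\forall \p(\phi_j \to \psi_j)$ simultaneously, i.e.\ $\b B \models (\CTLfp)^*$.

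For model-completeness, by the footnoted criterion \cite[Thm.~3.5.1]{CK} it suffices to show that every universal $\mathcal{L}$-formula is equivalent over $(\CTLfp)^*$ to an existential one. Using Lemma~\ref{l:justequations}, every quantifier-free formula reduces to an equation $t = \top$, so every universal formula is equivalent over $\CTLfp$ to one of the special form $\phi_j(\p) = \forall\x\,t(\p,\x)=\top$ for some $j=(t,\p,\x)\in J$. For each such $j$ I would prove that $(\CTLfp)^* \proves \forall\p\,(\phi_j \leftrightarrow \psi_j)$, where $\psi_j$ is the existential formula of \eqref{eq:psij}. The implication $\psi_j \to \phi_j$ is exactly property~(I), which holds already over $\CTLfp$. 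The converse $\phi_j \to \psi_j$ is precisely the axiom added to form $(\CTLfp)^*$, so it holds by definition in every model of $(\CTLfp)^*$. This establishes the equivalence of $\phi_j$ with the existential formula $\psi_j$ over $(\CTLfp)^*$, giving model-completeness.

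The crux of the argument is therefore establishing the two properties (I) and (II) of $\psi_j$, and in particular the semantic correctness chaining together Propositions~\ref{p:termtoMSO}, \ref{p:MSOtoaut}, and \ref{p:auttoterm}. The key computation runs as follows: starting from a $\p$-coloured tree $(S,\sigma)$, Proposition~\ref{p:termtoMSO} gives $\b{P}(S),V_\sigma \models \phi_j \iff S,\sigma \models_{\MSO} \Phi_j$; Proposition~\ref{p:MSOtoaut} converts this to acceptance of the $\omega$-expansion, $S,\sigma \models_{\MSO}\Phi_j \iff \mathcal{A}_{\Phi_j}$ accepts $(S_\omega,\sigma_\omega)$; and Proposition~\ref{p:auttoterm} rewrites acceptance as $\b{P}(S_\omega),V_{\sigma_\omega} \models \psi_j$. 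Here Proposition~\ref{p:subalgebra} is essential, since it lets me pass between $\b{P}(S)$ and its subalgebra in $\b{P}(S_\omega)$ while preserving the interpretation of the parameters $\p$; this is what makes $\b{P}(S_\omega)$ a legitimate extension witnessing the existential quantifier $\exists\q$. The completeness theorem (Theorem~\ref{t:rootedcompleteness}) is needed to reduce statements about arbitrary rooted $\CTLf$-algebras to statements about complex algebras of trees, so that the tree-based chain above applies.

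The main obstacle I anticipate is property~(II): showing that an \emph{arbitrary} rooted $\CTLf$-algebra $\b A$ (not just a complex algebra of a tree) with parameters interpreting $\p$ extends to a model where $\phi_j \to \psi_j$ holds. The difficulty is that $\b A$ need not embed into the complex algebra of a tree (as the excerpt explicitly warns after Theorem~\ref{t:jtrep}), so the tree-based semantic argument does not directly apply to $\b A$ itself. I expect the resolution to use Theorem~\ref{t:rootedcompleteness} to realize the relevant consistent data on a tree, form the complex algebra $\b{P}(S_\omega)$ of its $\omega$-expansion, and exhibit a $\p$-parameter-preserving embedding of $\b A$ into (an elementary extension of, or an ultrapower built from) such complex algebras; the witnessing states $\q$ of the accepting run then supply the existential witnesses for $\psi_j$ in the extension, so that $\phi_j \to \psi_j$ becomes valid there. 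Managing this embedding, and checking that it simultaneously handles all $j \in J$ via a chain construction, is where the genuine work lies.
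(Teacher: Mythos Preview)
Your proposal is correct and follows essentially the same approach as the paper. The one packaging difference worth noting is in the co-theory part: rather than iterating property~(II) over all $j$ and all parameter tuples via a hand-built chain, the paper invokes \cite[Lem.~3.5.7]{CK} once to embed any $\CTLfp$-model into an existentially closed one, and then shows that existentially closed models satisfy $(\CTLfp)^*$ by applying Lemma~\ref{l:extendalgebra} to a single $j$ and a single tuple $\oa$ at a time (the e.c.\ property pulls the existential witness back). This avoids having to check chain-preservation and to interleave over parameters appearing in successive extensions, but your route would work as well. Your sketch for property~(II) via compactness and tree models of the $\omega$-expansion is exactly what Lemma~\ref{l:extendalgebra} does.
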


\begin{proof} (Sketch)
 In order to show that $(\CTLfp)^*$ is model-complete one shows, by using the completeness theorem (Thm.~\ref{t:rootedcompleteness}),
  that for each $j = (t,\p,\x) \in J$, in all rooted $\CTLf$-algebras we have
\begin{equation*}
\forall \p \,( \psi_j \rightarrow \phi_j ).
\end{equation*}
This corresponds to property (I) in the proof outline given in Subsection~\ref{ss:proofoutline}.
Indeed, given this fact,  it follows from the definition of $(\CTLfp)^*$  that every universal formula is equivalent in $(\CTLfp)^*$ to an existential one, so that $(\CTLfp)^*$ is model-complete.
 
 That $(\CTLfp)^*$ is a co-theory of $\CTLfp$ follows from Lemma~\ref{l:extendalgebra} below, which corresponds to property (II) in the proof outline given in Subsection~\ref{ss:proofoutline}.
\end{proof}

\begin{lemma}\label{l:extendalgebra}
Let $j = (t,\p,\x) \in J$, with $\p = p_1,\dots,p_n$. For any rooted $\CTLf$-algebra $\b{A}$ and $\oa \in A^n$, there is a rooted $\CTLf$-algebra $\b{A}'$ which contains $\b{A}$ as a subalgebra such that $\b A' \models \phi_j(\oa) \to \psi_j(\oa)$.
\end{lemma}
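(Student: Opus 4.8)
The plan is to let the construction of $\b{A}'$ be governed by whether the hypothesis $\phi_j(\oa)$ already holds in $\b{A}$, the real work occurring only when it does.

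If $\b{A}\not\models\phi_j(\oa)$, I take $\b{A}':=\b{A}$, so that $\phi_j(\oa)\to\psi_j(\oa)$ holds vacuously. Hence assume $\b{A}\models\phi_j(\oa)$. Since the conclusion of the implication is existential, it now suffices to build a rooted $\CTLf$-algebra $\b{A}'\supseteq\b{A}$ together with a tuple $\ob$ of elements of $\b{A}'$ such that $\acc_{\mathcal{A}_{\Phi_j}}(\oa,\ob)=\top$, i.e. with $\b{A}'\models\psi_j(\oa)$.

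The witness $\ob$ should encode a successful run of the automaton $\mathcal{A}_{\Phi_j}$, and I would obtain it by representing $(\b{A},\oa)$ on trees and running Steps 1--3 of Subsection~\ref{ss:proofoutline}. Concretely: using the completeness theorem (Thm.~\ref{t:rootedcompleteness}) to represent $\b{A}$ by complex algebras of $\p$-coloured trees $(S,\sigma)$ with $\oa$ corresponding to $V_\sigma(\p)$, I would argue that $S,\sigma\models_{\MSO}\Phi_j$, where $\Phi_j$ is the formula of Prop.~\ref{p:termtoMSO}. By Prop.~\ref{p:MSOtoaut} this makes $\mathcal{A}_{\Phi_j}$ accept the $\omega$-expansion $(S_\omega,\sigma_\omega)$, and by Prop.~\ref{p:auttoterm} a successful run yields elements $\ob\in\b{P}(S_\omega)$ with $\acc_{\mathcal{A}_{\Phi_j}}(V_{\sigma_\omega}(\p),\ob)=\top$. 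Prop.~\ref{p:subalgebra} identifies $\b{P}(S)$ with a subalgebra of $\b{P}(S_\omega)$ compatibly with the colourings, so the witness lives over the same parameters. The algebra $\b{A}'$ would then be assembled from these ($\omega$-expanded) complex tree algebras so as to contain $\b{A}$ and the witness $\ob$.

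The crux---and the main obstacle---is to make the representation of the previous paragraph faithful and universal at once. On the one hand, $\b{A}$ must embed into the chosen extension as a genuine rooted $\CTLf$-algebra, preserving $\EU$ and $\EG$ as honest least/greatest fixpoints; this cannot be arranged by J\'onsson--Tarski alone, since, as noted after Thm.~\ref{t:jtrep}, $\CTLf$-algebras need not embed into complex ones. On the other hand, the hypothesis $\b{A}\models\forall\x\,t(\oa,\x)=\top$ only constrains $t$ at the elements of $\b{A}$, whereas $S,\sigma\models\Phi_j$ quantifies over all subsets of $S$, so the universal statement does not transfer to the tree by a naive pushforward. Both difficulties are reconciled by the tableau construction of Section~\ref{s:completeness}: the trees it produces realize eventualities faithfully (Lemmas~\ref{l:truthlemma} and~\ref{l:noloops}), which is exactly what aligns the fixpoint operators of $\b{A}$ with their genuine meaning on the tree, while bisimulation-invariance of the standard translation $\Phi_j$ lets its truth be read off from that representation. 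Carrying this out carefully---together with a compactness/diagram argument to adjoin the run witness to $\b{A}$ as a rooted $\CTLf$-extension---is the technical heart of the lemma.
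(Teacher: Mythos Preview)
Your proposal identifies several correct ingredients (completeness, the automata pipeline, a compactness/diagram argument) but assembles them around a step that does not work and is in fact unnecessary.

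The problematic step is the attempt to transfer the \emph{universal} hypothesis $\phi_j(\oa)$ from $\b{A}$ to a tree model, so that $S,\sigma\models_{\MSO}\Phi_j$. You correctly note that this is the ``main obstacle'', but the resolution you sketch does not work: the tableau construction of Section~\ref{s:completeness} produces, from a single consistent $\CTLfp$-formula, a tree satisfying that formula at some node; it does \emph{not} produce an embedding of $\b{A}$ into $\b{P}(S)$, and it certainly does not guarantee that an arbitrary universal $\mathcal{L}$-sentence true in $\b{A}$ remains true in $\b{P}(S)$. Bisimulation-invariance of $\Phi_j$ does not help either, since $\Phi_j$ quantifies over \emph{all} subsets of the tree, not merely those in the image of $\b{A}$.

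The key insight you are missing is that you never need $\phi_j$ to hold in the tree. By Propositions~\ref{p:termtoMSO}--\ref{p:auttoterm} combined, the implication $\phi_j\to\psi_j$ holds in \emph{every} algebra of the form $\b{P}(S_\omega)$, for any $\p$-coloured tree $(S,\sigma)$ whatsoever (this is Proposition~\ref{p:concludeequiv} in the appendix). The paper's proof therefore runs a straight Robinson-diagram argument: one shows that $\CTLfp$ together with the (positive) diagram of $\b{A}$ and the single sentence $\phi_j(\oa)\to\psi_j(\oa)$ is consistent. By compactness and Lemma~\ref{l:justequations}, a finite fragment reduces to a single inequation $t(\oa,\ob)\neq\bot$; completeness (Theorem~\ref{t:rootedcompleteness}) then yields a tree model $(S,\sigma)$ of that inequation, and $\b{P}(S_\omega)$ satisfies $\phi_j\to\psi_j$ automatically. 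No case split on whether $\b{A}\models\phi_j(\oa)$ is needed, and no transfer of $\phi_j$ to the tree is attempted.
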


\begin{rema}\label{r:nomodels}
\rm Although we have explicitly defined a model companion for $\CTLfp$, the models of the model companion $(\CTLfp)^*$ itself remain rather mysterious.
For instance, the only atom in a model of $(\CTLfp)^*$ is $\I$, as can be seen by 
taking as $\phi(p)$ the formula  $\forall x ((x\leq p) \to [(x=\bot) \vee (x=p)])$. \hfill $\lhd$
%
\end{rema}

\subsection{The binary case}\label{ss:modelcomp-binary}

We shall prove that $\CTLfpb$ has a model-companion too and, in addition, we shall be able to characterize this model-companion as the first-order theory of the complex algebra of the full binary tree. \\

\intpart{$\mathrm{S2S}$: $\MSO$ on the binary tree} Recall that $\mathrm{S2S}$ is the monadic second order logic of the full binary tree $2^*$ (we refer to~\cite[p. 7]{Tho1996} or \cite[Ch. 12]{GTW2002} for basic 
results used below). From a syntactic point of view, in $\mathrm{S2S}$ we have, in addition to the atomic formulas of $\MSO$, also the atomic formulas 
$f_0(p,q)$ and $f_1(p,q)$. Semantically, we interpret $\mathrm{S2S}$-formulas $\phi(\p)$ over $\p$-colourings $\sigma:\sigma \colon \p \to \mathcal{P}(2^*)$
of the full binary tree; for $i=0,1$, the new atomic formula $f_i(p,q)$ is interpreted so that we have 
\[ 
2^*, \sigma\models f_i(p,q) \iff f_i\cap (V_{\sigma}(p)\times V_{\sigma}(q))\neq \emptyset,
\]
 where on the right hand side, we view the unary function $f_i$ of Example~\ref{ex:binarytree} as a set of pairs.

Write $\mathcal{L}_{0,1}$ for the first-order language of rooted binary $\CTLf$-algebras. The following is proved in the same way as Prop.~\ref{p:termtoMSO}.

\begin{proposition}\label{prop:b-fo-to-s2s}
For any $\mathcal{L}_{0,1}$-formula $\phi(\p)$, there exists an $\mathrm{S2S}$-formula $\Phi(\p)$ such that, for any $\p$-colouring
$\sigma \colon \p \to \mathcal{P}(2^*)$, we have
\[\mathbb{P}(2^*), V_{\sigma} \models_{\FO} \phi(\p) \iff 2^*,\sigma \models_{\mathrm{S2S}} \Phi(\p).\]
\end{proposition}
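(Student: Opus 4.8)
The plan is to mimic the proof of Proposition~\ref{p:termtoMSO} almost verbatim, extending the standard translation $t \mapsto \dot{t}(\p,v)$ from $\mathcal{L}$-terms to $\mathcal{L}_{0,1}$-terms. Since $\mathcal{L}_{0,1}$ extends $\mathcal{L}$ only by the two unary function symbols $\X_0$ and $\X_1$, the bulk of the inductive definition of $\dot{t}$ is unchanged, and all I must supply are the two new inductive clauses. Recalling that in a binary rooted $\CTLf$-algebra $\X_i a = f_i^{-1}(a) = \{s \mid f_i(s) \in a\}$, the forcing condition ``$s \forces \X_i \phi$ iff $f_i(s) \forces \phi$'' from Definition~\ref{d:semantics2} dictates the translation. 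Since $\mathrm{S2S}$ provides the atomic formula $f_i(v,v')$, which on individual variables says $f_i(v) = v'$, I would set
\[ \dot{\X_i t}(\p,v) := \exists v' \, (f_i(v,v') \wedge \dot{t}(\p,v')). \]

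The verification that this clause is correct reduces to checking that, for a node $s$ of $2^*$, $s \in (\X_i t)^{\mathbb{P}(2^*)}$ iff $f_i(s)$ satisfies $\dot{t}$, which is immediate from the interpretation of $f_i(v,v')$ as $f_i(v)=v'$ (the function $f_i$ is total on the full binary tree, so the existential witness $v'=f_i(s)$ always exists and is unique). As with the original proposition, for an arbitrary $\mathcal{L}_{0,1}$-formula $\phi(\p)$ I then obtain $\Phi(\p)$ by replacing each atomic subformula $t_1 = t_2$ with $\forall v\,(\dot{t_1}(\p,v) \leftrightarrow \dot{t_2}(\p,v))$, and the equivalence $\mathbb{P}(2^*),V_\sigma \models_{\FO} \phi \iff 2^*,\sigma \models_{\mathrm{S2S}} \Phi$ follows by the same structural induction, using the analogue of Proposition~\ref{p:compalg} for $\CTLfpb$ that is noted in the text.

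I expect no serious obstacle here: the proposition is explicitly flagged as being ``proved in the same way as Prop.~\ref{p:termtoMSO}'', and the only genuinely new content is the pair of clauses for $\X_0$ and $\X_1$. The one point requiring a word of care is that $\X_i$ is a \emph{deterministic} modality, so its translation uses the functional atomic formula $f_i(v,v')$ of $\mathrm{S2S}$ rather than the relational $R(v,v')$ used for $\diam$; because $f_i$ is a genuine (total, single-valued) function on $2^*$, the existential quantifier in the clause above behaves deterministically and the translation is faithful. Everything else — the base cases, the Boolean cases, the $\diam$ and $\I$ cases, and the fixpoint cases for $\EU$ and $\EG$ — carries over unchanged from the proof of Proposition~\ref{p:termtoMSO}, so I would simply state that the proof proceeds exactly as before with the two additional clauses above.
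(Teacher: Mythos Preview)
Your proposal is correct and matches the paper's approach: the paper explicitly states that this proposition ``is proved in the same way as Prop.~\ref{p:termtoMSO}'' and gives no further detail, so your extension of the standard translation by the two deterministic clauses $\dot{\X_i t}(\p,v) := \exists v'\,(f_i(v,v') \wedge \dot{t}(\p,v'))$ is exactly what is intended.
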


\intpart{Encoding automata} 
A \emph{parity tree automaton} $\mathcal{A}$ on a finite alphabet $\Sigma$ is a tuple $(Q, q_I, \Delta, \Omega)$ where $Q$ is a finite set of states, $q_I \in Q$, $\Delta \subseteq Q \times \Sigma \times Q \times Q$, and $\Omega: Q\longrightarrow \omega$. We shall consider only automata whose alphabet $\Sigma$ is of the kind 
$\mathcal{P}(\p)$ for a finite set of propositional letters $\p$.
If $\sigma \colon 2^* \to \Sigma:=\mathcal{P}(\p)$ is a $\p$-colouring of the  tree $2^*$, then a \emph{run} of $\mathcal{A}$ on $\sigma$ is a function $\rho : 2^* \to Q$ such that $\rho(\epsilon) = q_I$ and, for any $w \in 2^*$, $(\rho(w), \sigma(w), \rho(w0), \rho(w1)) \in \Delta$. If $\pi \in 2^\omega$ is an infinite branch, we denote by $\mathrm{Inf}_\rho(\pi)$ the set of states in $Q$ which occur infinitely often on $\pi$ in $\rho$, i.e., $\mathrm{Inf}_\rho(\pi) := \{q \in Q \ | \ \pi \cap \rho^{-1}(q) \text{ is infinite}\}$. A run $\rho$ of $\mathcal{A}$ on $\sigma$ is \emph{successful} if for every infinite branch $\pi$ we have that $\min(\{\Omega(q)\mid q\in Inf_\rho(\pi)\})$ is even. We say $\mathcal{A}$ \emph{accepts} a $\p$-colouring $\sigma$ iff there exists a successful run of $\mathcal{A}$ on $\sigma$. The following result is well-known. For a proof, cf., e.g., \cite[Thm.~8.7 \& Lem.~12.21]{GTW2002}.

\begin{theorem}\label{thm:b-s2s-to-automaton}
Let $\Phi(\p)$ be a formula of $\mathrm{S2S}$. There exists a parity tree automaton $\mathcal{A}$ over the alphabet $\Sigma:=\mathcal{P}(\p)$ such that, for any $\sigma \colon 2^* \to \mathcal{P}(\p)$, 
\[ 2^*, \sigma \models_{\mathrm{S2S}} \Phi \iff \mathcal{A} \text{ accepts } \sigma.\]
\end{theorem}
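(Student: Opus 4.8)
The plan is to prove the statement by structural induction on the $\mathrm{S2S}$-formula $\Phi$, following the classical Rabin-style correspondence between monadic second-order logic on the binary tree and tree automata. Since the reduced syntax of $\mathrm{S2S}$ builds every formula from the atomic formulas $p \subseteq q$, $R(p,q)$, $f_0(p,q)$, $f_1(p,q)$ using $\vee$, $\neg$ and $\exists p$, it suffices to construct parity tree automata for the atomic formulas and to show that the class of languages of the form $\{\sigma \mid \mathcal{A} \text{ accepts } \sigma\}$ over the alphabet $\mathcal{P}(\p)$ is closed under union, projection, and complementation.

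The base cases are handled by small explicit automata. For $p \subseteq q$ one takes a deterministic automaton with a single rejecting sink that checks the local implication $p(w) \to q(w)$ at every node $w$; for $f_0(p,q)$ (and symmetrically $f_1(p,q)$) one builds an automaton that nondeterministically guesses a node $w$ with $w \in p$ and then verifies $w0 \in q$ by descending into the left child, and $R(p,q)$ is handled analogously by guessing a witnessing edge into either child. The inductive step for $\vee$ is the union construction: given automata $\mathcal{A}_1,\mathcal{A}_2$, a single automaton guesses at the root which of the two to simulate, so that the parity conditions combine over disjoint state sets. The step for $\exists p$ is projection: an automaton for $\Phi$ over the alphabet $\mathcal{P}(\p)$ is turned into one for $\exists p\,\Phi$ over the smaller alphabet $\mathcal{P}(\p \setminus \{p\})$ by guessing, at each node, whether $p$ holds there, i.e. by projecting the transition relation away from the $p$-coordinate. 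Both of these constructions stay within nondeterministic parity automata and need no reshaping of the acceptance condition.

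The genuine obstacle is the inductive step for negation, which requires complementation of nondeterministic parity tree automata; this is the heart of Rabin's theorem, and it is precisely where nondeterministic projection cannot be used naively. The cleanest route is via the positional (memoryless) determinacy of parity games: acceptance of a coloured tree $\sigma$ by $\mathcal{A}$ is recast as a parity game $G(\mathcal{A},\sigma)$ between a player \emph{Automaton}, who builds a run, and a player \emph{Pathfinder}, who selects an infinite branch, with the parity winning condition inherited from $\Omega$. By determinacy exactly one player has a positional winning strategy, and one shows that $\mathcal{A}$ accepts $\sigma$ iff Automaton wins. A positional winning strategy for Pathfinder on a rejected tree is then encoded into the transition structure of a complement automaton $\mathcal{A}^c$ whose successful runs witness exactly the trees rejected by $\mathcal{A}$; verifying that $\mathcal{A}^c$ is again a parity automaton and that its parity map correctly dualizes $\Omega$ is the technical core. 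I expect this complementation argument, rather than the base cases or the easy closure properties, to carry essentially all the difficulty.

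Assembling these closure properties by the induction yields, for each $\mathrm{S2S}$-formula $\Phi$, a parity tree automaton $\mathcal{A}$ with the required equivalence; since the statement is classical, one may alternatively invoke the cited results directly \cite[Thm.~8.7 \& Lem.~12.21]{GTW2002}.
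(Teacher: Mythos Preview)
Your sketch is correct and follows the standard Rabin-style argument (atomic automata, closure under union and projection, and complementation via positional determinacy of parity games); the paper, however, does not prove this theorem at all but simply cites it as well-known, referring to \cite[Thm.~8.7 \& Lem.~12.21]{GTW2002}. So your outline is not so much a different route as the actual content behind the citation the paper invokes.
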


Analogous to Prop.~\ref{p:auttoterm}, we also have:
\begin{proposition}\label{p:b-auttoterm}
For any parity tree automaton $\mathcal{A}=(Q, q_I, \Delta, \Omega)$ over $\Sigma:=\mathcal{P}(\p)$ with set of states $\q$, there exists an $\mathcal{L}_{0,1}$-term $\acc_{\mathcal{A}}(\p,\q)$ such that for any $\p$-colouring $\sigma: 2^*\longrightarrow \mathcal{P}(\p)$, we have
\[ \mathcal{A} \text{ accepts } (2^*,\sigma) \iff \b{P}(2^*),V_{\sigma} \models \exists \q \; \acc_{\mathcal{A}}(\p,\q) = \top. \]
\end{proposition}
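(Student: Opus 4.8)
The plan is to mirror the proof of Proposition~\ref{p:auttoterm} almost verbatim, the only genuinely new ingredient being that the branching structure of the full binary tree is \emph{deterministic}, so the modal ``successor-set'' construct $\nabla D$ is replaced by the two deterministic pointers $\X_0,\X_1$. First I would record the two standard observations that drive both directions: a term $t$ satisfies $t = \top$ in $\b{P}(2^*)$ iff $t$ is forced at every node (by the analogue of Proposition~\ref{p:compalg}), and a valuation $W\colon\q\to\mathcal{P}(2^*)$ of the state variables together with the forcing of $\acc_2$ below corresponds exactly to a function $\rho\colon 2^*\to Q$, i.e.\ a candidate run. With this dictionary in place, the proposition reduces to encoding the three run conditions (Initial, Transition, Success) from Theorem~\ref{thm:b-s2s-to-automaton} as terms that hold at every node precisely when $\rho$ is a \emph{successful} run.

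Concretely, writing $\odot\alpha := \bigwedge_{p\in\alpha}p \wedge \bigwedge_{p\notin\alpha}\neg p$ for the term that forces colour exactly $\alpha$, I would set $\acc_{\mathcal{A}}(\p,\q) := \acc_1\wedge\acc_2\wedge\acc_3$ with
\[
\acc_1(\p,\q) := \neg\I \vee q_I,
\]
\[
\acc_2(\p,\q) := \bigvee_{q\in Q}\left( q \wedge \bigwedge_{q'\in Q\setminus\{q\}}\neg q' \wedge \bigvee\left\{ \odot\alpha \wedge \X_0 q_0' \wedge \X_1 q_1' \ | \ (q,\alpha,q_0',q_1')\in\Delta \right\} \right),
\]
\[
\acc_3(\p,\q) := \bigwedge_{n}\AF\!\left( \bigvee_{\Omega(q')<n} q',\ \bigwedge_{\Omega(q)=n}\neg q \right),
\]
where $n$ ranges over the odd numbers in the range of $\Omega$. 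Here $\acc_1=\top$ forces $q_I$ at the root $\epsilon$ (Initial); the inner disjunct of $\acc_2$ forces exactly one state at each node, while $\X_0 q_0'\wedge\X_1 q_1'$ uses the deterministic modalities to place the prescribed states on the left and right children, so that $\acc_2=\top$ is equivalent to $(\rho(w),\sigma(w),\rho(w0),\rho(w1))\in\Delta$ for all $w$ (Transition); and $\acc_3$ is literally the parity term of Proposition~\ref{p:auttoterm}.

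For the equivalence I would argue both directions via the forcing characterisation. If $\mathcal{A}$ accepts $\sigma$ with successful run $\rho$, set $W(q):=\rho^{-1}(q)$ and check that $\acc_1,\acc_2,\acc_3$ are each forced everywhere; conversely, a valuation making $\acc_{\mathcal{A}}=\top$ yields a function $\rho$ satisfying Initial, Transition, and Success, hence a successful run. The main obstacle is, as in Proposition~\ref{p:auttoterm}, the correctness of $\acc_3$: one must show that $\AF\!\left(\bigvee_{\Omega(q')<n}q',\bigwedge_{\Omega(q)=n}\neg q\right)$ holding at \emph{every} node, for every odd $n$, is equivalent to every branch having even minimal parity occurring infinitely often. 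The delicate step is the forbidding direction: a branch whose minimal infinitely-recurring parity is an odd $n_0$ has, past the last occurrence of any smaller parity, a tail on which parity $n_0$ recurs but no smaller parity ever appears, and at the node starting that tail the semantics of $\AF(\,\cdot\,)=\neg\EG(\neg\,\cdot\,,\neg\,\cdot\,)$ is violated; the converse uses that the minimal recurring parity on the whole branch agrees with that on any tail and is even, hence strictly below any recurring odd $n$, so a parity $<n$ recurs and the $\AF$-disjunct is eventually met. Since $2^*$ is serial and its branches are exactly the infinite $R$-paths, this argument transfers unchanged from the $\omega$-expansion setting of Proposition~\ref{p:auttoterm} to the full binary tree.
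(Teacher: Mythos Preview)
Your proposal is correct and follows essentially the same approach as the paper: the paper defines $\bullet\theta := \X_0(q_0)\wedge\X_1(q_1)\wedge\odot\alpha$ for $\theta=(\alpha,q_0,q_1)$ and then sets $\acc_{\mathcal{A}}=\acc_1\wedge\acc_2\wedge\acc_3$ with precisely your three terms. Your write-up is in fact more explicit than the paper's, which merely states the terms and refers back to Proposition~\ref{p:auttoterm} without spelling out the bijection between valuations and runs or the verification of the parity clause.
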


Putting together what we have, we conclude that 

\begin{theorem}\label{t:main2}
The first-order theory $(\CTLfpb)^*$ of the binary tree algebra $\mathbb{P}(2^*)$ is the model companion of $\CTLfpb$.
\end{theorem}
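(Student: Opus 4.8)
The plan is to verify directly that the complete theory $\mathrm{Th}(\mathbb{P}(2^*))$ satisfies the three defining clauses of a model companion of $\CTLfpb$ (Def.~\ref{d:modelcompanion}): extension, co-theory, and model-completeness. Since $\mathbb{P}(2^*)$ is itself a binary rooted $\CTLf$-algebra, it is a model of the universal theory $\CTLfpb$, and as $\CTLfpb$ consists of sentences we get the extension $\mathrm{Th}(\mathbb{P}(2^*)) \supseteq \CTLfpb$. This already gives one half of the co-theory clause: every model of $\mathrm{Th}(\mathbb{P}(2^*))$ is a model of $\CTLfpb$ and embeds into itself.

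For model-completeness I would mirror the proof of Theorem~\ref{t:main1}, now using the binary machinery. For each $\mathcal{L}_{0,1}$-term $t(\p,\x)$ set $\phi(\p) := \forall\x\,t(\p,\x) = \top$ and, using Prop.~\ref{prop:b-fo-to-s2s}, Thm.~\ref{thm:b-s2s-to-automaton} and Prop.~\ref{p:b-auttoterm}, the existential formula $\psi(\p) := \exists\q\,\acc_{\mathcal{A}_\Phi}(\p,\q) = \top$. The implication $\phi \to \psi$ holds in $\mathbb{P}(2^*)$ by chaining those three results (translate $\phi$ to the $\mathrm{S2S}$-formula $\Phi$, pass to an accepting run of the parity automaton $\mathcal{A}_\Phi$, and encode that run by the witnesses $\q$). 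The converse $\psi \to \phi$ holds in every binary rooted $\CTLf$-algebra, exactly as property (I) in the outline of Theorem~\ref{t:main1}, using the binary completeness theorem (Thm.~\ref{t:binarycompleteness}): on $2^*$ an accepting run forces acceptance of $\mathcal{A}_\Phi$, hence $\Phi$ and hence $\phi$, and completeness transfers the resulting inconsistency of $\psi \wedge \neg\phi$ to all models. Thus $\mathbb{P}(2^*) \models \forall\p(\phi \leftrightarrow \psi)$. Since by the $\mathcal{L}_{0,1}$-analogue of Lemma~\ref{l:justequations} every universal $\mathcal{L}_{0,1}$-formula is $\CTLfpb$-equivalent to some such $\phi$, every universal formula is $\mathrm{Th}(\mathbb{P}(2^*))$-equivalent to the existential $\psi$, which gives model-completeness by the criterion of \cite[Thm.~3.5.1]{CK}.

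The remaining, and crucial, step is the second half of the co-theory clause: every binary rooted $\CTLf$-algebra $\b{A}$ embeds into a model of $\mathrm{Th}(\mathbb{P}(2^*))$. By the diagram criterion and compactness it suffices to realize each finite fragment of the quantifier-free diagram of $\b{A}$ inside $\mathbb{P}(2^*)$ itself. By Lemma~\ref{l:justequations} such a fragment amounts to finitely many conditions $t_i(\oa) = \top$ and $u_k(\oa) \neq \bot$ holding in $\b{A}$. I would package these into the single $\CTLfpb$-formula $\Psi := \I \wedge \bigwedge_i \AR(t_i,\bot) \wedge \bigwedge_k \EU(u_k,\top)$, and note that $\Psi$ is consistent because it is forced at the root of $\b{A}$: the third root axiom gives $\I \leq \EU(u_k,\top)$ whenever $u_k \neq \bot$, while $t_i = \top$ makes $\AR(t_i,\bot) = \AR(\top,\bot) = \top$. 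By Thm.~\ref{t:binarycompleteness}, $\Psi$ is then satisfiable on the full binary tree; evaluating at the node forcing $\I$, which is necessarily the root $\epsilon$, the conjuncts $\AR(t_i,\bot)$ force each $t_i$ at every reachable node and the conjuncts $\EU(u_k,\top)$ force each $u_k$ somewhere, so the chosen colouring witnesses $t_i = \top$ and $u_k \neq \bot$ in $\mathbb{P}(2^*)$. This realizes the fragment, completes the compactness argument, and embeds $\b{A}$ into an elementary extension of $\mathbb{P}(2^*)$.

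I expect this last step to be the main obstacle, since it is where the abstract algebra must be pinned to the single concrete structure $\mathbb{P}(2^*)$; the essential leverage is that $2^*$ is the \emph{unique} binary tree, so that binary completeness reduces realizability in \emph{some} model to realizability on $2^*$ specifically. Combining model-completeness with the two halves of the co-theory clause then shows that the complete theory $\mathrm{Th}(\mathbb{P}(2^*))$ is the (necessarily unique) model companion of $\CTLfpb$.
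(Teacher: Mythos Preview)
Your proposal is correct and follows essentially the same route as the paper: model-completeness via the chain Prop.~\ref{prop:b-fo-to-s2s} $\to$ Thm.~\ref{thm:b-s2s-to-automaton} $\to$ Prop.~\ref{p:b-auttoterm}, and the co-theory direction via compactness, the diagram method, and the binary completeness theorem. The only cosmetic differences are that the paper applies the translation chain to an arbitrary $\mathcal{L}_{0,1}$-formula (not just universal ones), and for the diagram argument invokes Lemma~\ref{l:justequations} to get a single inequation $t'_\phi \neq \bot$ rather than building your explicit witness $\I \wedge \bigwedge_i \AR(t_i,\bot) \wedge \bigwedge_k \EU(u_k,\top)$; your version is a perfectly good concrete unpacking of that lemma.
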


\begin{proof}
 Let $\phi(\p)$ be a $\mathcal{L}_{0,1}$-formula. Using Proposition~\ref{prop:b-fo-to-s2s}, Theorem~\ref{thm:b-s2s-to-automaton} and Proposition~\ref{p:b-auttoterm},
 it is clear that 
 \[ (\CTLfpb)^* \proves \phi(\p) \leftrightarrow \exists \q \; \acc_{\mathcal{A}}(\p,\q) = \top~. \]
 Thus, every formula is equivalent modulo $(\CTLfpb)^*$ to an existential formula, so $(\CTLfpb)^*$ is model-complete.
 
 To show that $\CTLfpb$ and $(\CTLfpb)^*$ are co-theories, since $\CTLfpb \subseteq (\CTLfpb)^*$, it is sufficient to show that every rooted binary $\CTLf$-algebra embeds into a model of $(\CTLfpb)^*$, i.e., into an algebra which is elementarily equivalent to $\mathbb{P}(2^*)$. By compactness and Robinson Diagram
 Lemma (cf.~\cite[Prop.~2.1.8]{CK}), it is sufficient to prove the consistency of the union of $(\CTLfpb)^*$ with a finite conjunction $\phi$ of ground literals with parameters in the support of $\b{A}$ such that $\b{A} \models \phi$. For this, in view of Proposition~\ref{l:justequations}, it is sufficient to show that $(\CTLfpb)^* \cup\{t'_{\phi}(\overline{a})
 \neq \bot\}$
 has a model for some term $t'_\phi(\p)$ such that $\b{A}\models t'_{\phi}(\overline{a})
 \neq \bot$. The latter means that $t'_\phi(\p)$ is a consistent rooted binary $\CTLf$-formula, so we can simply invoke  
 the completeness Theorem~\ref{t:binarycompleteness} to get what we need. 
 \end{proof}

\section{Conclusion}\label{s:conclusions}
%
%
There is an important difference between our results for the tree logic $\CTLfp$ and the \emph{binary} tree logic $\CTLfpb$. In the binary case, we know that the model companion $(\CTLfpb)^*$ 
of $\CTLfpb$  is the first-order theory of the powerset algebra of the full binary tree. In contrast, by Remark~\ref{r:nomodels}, no powerset algebra can be a model of $(\CTLfp)^*$. 
From this, we can conclude that, if one wants to find a framework for $\MSO$ on infinite trees where `all equations are solvable', complex algebras of transition systems are insufficient and 
 algebraic models  become indispensable.

We leave to further research the interesting questions, posed by
the reviewers, whether $\CTL$ itself has a model companion, and
which are the minimal algebraizable fragments of the modal mu-calculus
having a model companion.



\acks

We wish to thank Yde Venema for his invaluable help and his decisive suggestions, and the organizers of the 2015 Dagstuhl seminar on Duality in Computer Science for facilitating this exchange. Most of this research was performed while the second-named author was at the University of Milan, supported by the project FIRB ``Futuro in Ricerca'' (RBFR10DGUA--002). We are very grateful to Vincenzo Marra for making this possible. Finally, we thank the reviewers for their many constructive comments and questions.
%


\bibliographystyle{abbrvnat}






\newpage
\onecolumn

\appendix

{\huge \bf Appendix}

\makeatletter
\renewcommand{\section}{%
  \@startsection
    {section}%
    {1}%
    {0pt}%
    {-\@sectionaboveskip}%
    {\@sectionbelowskip}%
    {\Large \bfseries \raggedright}}
\renewcommand{\subsection}{%
  \@startsection%
    {subsection}%
    {2}%
    {0pt}%
    {-\@subsectionaboveskip}%
    {\@subsectionbelowskip}%
    {\large \bfseries \raggedright}}
\makeatother

\section{Proofs for Section~\ref{s:fairCTL}}
{\large The following lemma is crucial for proving Prop.~\ref{p:compalg}.

\begin{lemma}\label{l:compalg-indstep}
Let $(S,R)$ be a transition system with complex algebra 
\[\b{P}(S) = (\mathcal{P}(S), \emptyset, \cup, S \setminus (-), \diam, \EU, \EG).\]
For any $a_1, a_2 \in \mathcal{P}(S)$ and $s \in S$, we have
\begin{enumerate}
\item $s \in \EU(a_1,a_2)$ iff there exist $n \geq 0$ and an $R$-path $s = s_0,\dots,s_n$ such that $s_t \in a_2$ for all $t < n$ and $s_n \in a_1$.
\item $s \in \EG(a_1,a_2)$ iff there exists an infinite $R$-path $s = s_0,s_1,\dots$ such that $s_t \in a_1$ for all $t$ and there exist infinitely many $t$ with $s_t \in a_2$.
\end{enumerate}
\end{lemma}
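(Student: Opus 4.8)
The plan is to unfold the defining properties of $\EU$ and $\EG$ in the complete lattice $\mathcal{P}(S)$: by Definition~\ref{d:ctlfalgebra}(iv), $\EU(a_1,a_2)$ is the least pre-fixpoint of $F(X) := a_1 \cup (a_2 \cap \diam_R X)$, and $\EG(a_1,a_2)$ is the greatest post-fixpoint of $H(Y) := a_1 \cap \diam_R \EU(a_2 \cap Y, a_1)$, where $\diam_R X = R^{-1}[X]$ is the set of nodes with an $R$-successor in $X$. In each part I identify the displayed right-hand side as a set and show it coincides with the relevant extremal fixpoint.

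For part (1), write $U$ for the set of $s$ admitting a finite witnessing path. First I check that $U$ is a pre-fixpoint of $F$: if $s \in a_1$, the trivial path of length $0$ witnesses $s \in U$; and if $s \in a_2$ has some $R$-successor in $U$, prepending $s$ to a witnessing path for that successor shows $s \in U$. Hence $F(U) \subseteq U$, so $\EU(a_1,a_2) \subseteq U$ by minimality. Conversely, I prove $U \subseteq \EU(a_1,a_2)$ by induction on the length $n$ of a witnessing path, using that $\EU(a_1,a_2)$ is itself a pre-fixpoint: the case $n = 0$ gives $s \in a_1 \subseteq \EU(a_1,a_2)$, and the inductive step strips off the first node and applies $F(\EU(a_1,a_2)) \subseteq \EU(a_1,a_2)$.

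For part (2), I first use part (1) to reformulate $H$: unwinding the definition, $s \in H(Y)$ holds if and only if there is a finite path $s = t_0, \dots, t_m$ with $m \geq 1$, $t_0, \dots, t_{m-1} \in a_1$, and $t_m \in a_2 \cap Y$. Writing $G$ for the set of $s$ admitting an infinite $a_1$-path meeting $a_2$ infinitely often, I show $G \subseteq H(G)$ as follows: along a witnessing infinite path from $s$, take the least index $m \geq 1$ with $s_m \in a_2$ (such $m$ exists since $a_2$ is met infinitely often); the tail from $s_m$ witnesses $s_m \in a_2 \cap G$, so $s \in H(G)$. Thus $G$ is a post-fixpoint and $G \subseteq \EG(a_1,a_2)$.

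The main obstacle, and the only genuinely infinitary step, is the reverse inclusion $\EG(a_1,a_2) \subseteq G$. Here I set $Y := \EG(a_1,a_2)$ and use that it is a post-fixpoint, so $Y \subseteq H(Y)$, together with $Y \subseteq a_1$ (immediate from $H(Y) \subseteq a_1$). Starting from an arbitrary $s \in Y$, I repeatedly invoke membership in $H(Y)$ to obtain finite path segments, each of length at least $1$, whose intermediate nodes lie in $a_1$ and whose final node lies in $a_2 \cap Y$; splicing these segments together (using dependent choice) produces an infinite path all of whose nodes lie in $a_1$ and whose segment-endpoints lie in $a_2$. Because each segment has length at least one, these endpoints occur at arbitrarily large positions, so $a_2$ is visited infinitely often and $s \in G$, as required.
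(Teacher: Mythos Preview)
Your proof is correct and follows essentially the same approach as the paper's: in both parts you identify the ``semantic'' set of states admitting a witnessing path and verify that it is the relevant extremal (pre- or post-)fixpoint, with the infinitary direction in part (2) handled by iteratively invoking the post-fixpoint inequality to splice together finite segments ending in $a_2 \cap \EG(a_1,a_2)$.
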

\begin{proof}
1. By the definition of $\EU$ as a least pre-fixpoint, it suffices to prove that the set
\begin{align*}
x_0 &:= \{s \in S \ | \ \text{there exist } n \geq 0 \text{ and an $R$-path } s = s_0,\dots,s_n \text{ such that } s_t \in a_2 \text{ for all } t < n \text{ and } s_n \in a_1\}
\end{align*}
is the least element $x$ of $\b{P}(S)$ for which $a_1 \vee (a_2 \wedge \diam x) \leq x$ holds.

\begin{itemize}
\item $a_1 \vee (a_2 \wedge \diam x_0) \leq x_0$: If $s \in a_1$, then $s \in x_0$, as witnessed by the trivial path `$s$'. If $s \in a_2 \wedge \diam x_0$, pick an $R$-successor $s_1$ of $s$ such that $s_1 \in x_0$. Pick an $R$-path $s_1,\dots,s_n$ witnessing that $s_1 \in x_0$. Since $s \in a_2$, the $R$-path $s, s_1, \dots, s_n$ witnesses that $s \in x_0$.
\item $x_0$ is the least such: Suppose that $a_1 \vee (a_2 \wedge \diam x) \leq x$ for some $x \in \b{P}(S)$. We need to show that $x_0 \leq x$. Suppose that $s_0 \in x_0$ and choose an $R$-path $s_0,\dots,s_n$ witnessing this. We show by induction on $n$ that $s \in x$. For $n = 0$, then $s_0 \in a_1$, so $s_0 \in x$. For $n > 0$, the shorter path $s_1, \dots, s_n$ gives, by induction, that $s_1 \in x$. Now $s_0 \in a_2$ and $s_0 \in \diam x$, so $s_0 \in x$.
\end{itemize}

2. By the definition of $\EG$ as a greatest post-fixpoint, it suffices to prove that the set
\begin{align*} 
x_0 &:= \{ s \in S \ | \ \text{there exists an infinite $R$-path } s = s_0, s_1, \dots \text{ such that } s_t \in a_1 \text{ for all } t \text{ and } s_t \in a_2 \text { for infinitely many } t\}
\end{align*}
is the greatest element $x$ of $\b{P}(S)$ for which $x \leq a_1 \wedge \dia \EU(a_2 \wedge x,a_1)$ holds. 
\begin{itemize}
\item $x_0 \leq a_1 \wedge \dia \EU(a_2 \wedge x_0,a_1)$: let $s_0 \in x_0$, and pick an infinite $R$-path $s_0,s_1,\dots$ witnessing this. Clearly, $s_0 \in a_1$. Moreover, the $R$-successor $s_1$ of $s_0$ lies in $\EU(a_2 \wedge x_0,a_1)$: pick some $t \geq 1$ such that $s_t \in a_2$. The infinite $R$-path $s_t, s_{t+1},\dots$ witnesses that $s_t \in x_0$, so $s_t \in a_2 \wedge x_0$. We also have $s_{t'} \in a_1$ for all $1 \leq t' < t$, concluding the proof that $s_1 \in \EU(a_2 \wedge x_0, a_1)$ by item (1).
\item $x_0$ is the greatest such: let $x \in \b{P}(S)$ be such that $x \leq a_1 \wedge \dia \EU(a_2 \wedge x, a_1)$. To show $x \leq x_0$, let $s_0 \in x$ be arbitrary; we show that $s_0 \in x_0$. Since $s_0 \in \dia \EU(a_2 \wedge x,a_1)$, by definition of $\dia$ and item (1), pick a successor $s_1$ of $s$ and a finite $R$-path $s_1,\dots,s_n$ such that $s_i \in a_1$ for all $1 \leq i \leq n$ and $s_n \in a_2 \wedge x$. Repeating this argument, we obtain an infinite $R$-path $(s_i)_{i=0}^\infty$ witnessing that $s \in x_0$.\vspace{-5mm}
\end{itemize}
\end{proof}

\begin{repproposition}{p:compalg}
Let $(S,R,\sigma)$ be a $\p$-coloured transition system. For any $\CTLf(\p)$-formula $\phi$ and $s \in S$, we have
\[ s \forces \phi \iff s \in \phi^{\b{P}(S)}(V_\sigma(\p)).\]
\end{repproposition}

\begin{proof}
By induction on the complexity of $\phi$. All cases except $\EU$ and $\EG$ are immediate from the definitions. 

Suppose that $\phi = \EU(\psi_1,\psi_2)$ or $\phi = \EG(\psi_1,\psi_2)$. Write $a_k := \psi_k^{\b{P}(S)}(V_\sigma(\p))$ for $k = 1,2$. By the induction hypothesis, we have, for $k = 1,2$ and for all $s \in S$,
\[ s \forces \psi_k \iff s \in a_k.\]
By Lemma~\ref{l:compalg-indstep}, we obtain the desired equivalences for $\EU(\psi_1,\psi_2)$ and $\EG(\psi_1,\psi_2)$.
\end{proof}

\newpage
\section{Proofs for Section~\ref{s:completeness}}
\subsection{Proofs for Subsection~\ref{ss:context}}
The following facts are clear semantically, and not so hard to derive syntactically.
\begin{lemma}\label{l:basicsyntfacts}
For any elements $a,a',b,b',c$ in a $\CTLf$ algebra $\b{A}$,
\begin{enumerate}
\item $\EU(a \vee a',b) = \EU(a,b) \vee \EU(a',b).$
\item $\AR(a \wedge a', b) = \AR(a,b) \wedge \AR(a',b).$
\item $a \leq a'$ and $b \leq b'$ implies $\EG(a,b) \leq \EG(a',b')$.
\item $a \leq a'$ and $b \leq b'$ implies $\AF(a,b) \leq \AF(a',b')$.
\item $\AR(a,b) \wedge \neg b \leq \Box \AR(a,b).$
\item if $a \wedge c \leq \Box c$ and $b \wedge c \leq \Box c$, then $\EU(a,b)\wedge c \leq \EU(a \wedge \Box c,b \wedge \Box c)$.
\end{enumerate}
\end{lemma}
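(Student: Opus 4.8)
The plan is to derive all six facts from the least-pre-fixpoint and greatest-post-fixpoint characterizations in Definition~\ref{d:ctlfalgebra}, together with Boolean and elementary modal reasoning; the recurring tool is \emph{monotonicity} of the four operators, which I would record first. For instance, if $a \leq a''$ and $b \leq b''$, then $x \mapsto a \vee (b \wedge \diam x)$ is pointwise below $x \mapsto a'' \vee (b'' \wedge \diam x)$, so $\EU(a'',b'')$ is a pre-fixpoint of the former operator, and \eqref{EUmin} yields $\EU(a,b) \leq \EU(a'',b'')$. The dual argument using \eqref{EGmax} gives monotonicity of $\EG$; applying $\neg(\cdot)$ and the definitions $\AR(a,b) = \neg\EU(\neg a, \neg b)$ and $\AF(a,b) = \neg\EG(\neg a, \neg b)$ transfers these to $\AR$ and $\AF$.

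For item (1), the inequality $\geq$ is immediate from monotonicity of $\EU$ in its first argument. For $\leq$, I would set $c := \EU(a,b) \vee \EU(a',b)$ and check that $c$ is a pre-fixpoint of $x \mapsto (a\vee a') \vee (b \wedge \diam x)$: the joins $a, a'$ sit below $c$ by \eqref{EUfix}, while $b \wedge \diam c = (b \wedge \diam \EU(a,b)) \vee (b \wedge \diam\EU(a',b)) \leq c$ because $\diam$ preserves joins and again by \eqref{EUfix}; then \eqref{EUmin} concludes. Item (3) is handled analogously in its dual form: using \eqref{EGfix} one shows that $\EG(a,b)$ is a post-fixpoint of $y \mapsto a' \wedge \diam\EU(b' \wedge y, a')$, the only nonroutine point being $\diam\EU(b\wedge\EG(a,b),a) \leq \diam\EU(b'\wedge\EG(a,b),a')$, which follows from monotonicity of $\EU$ and $\diam$; then \eqref{EGmax} applies. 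Items (2) and (4) are then just the De Morgan duals of (1) and (3) via the definitions of $\AR$ and $\AF$. Item (5) is pure Boolean algebra: \eqref{ARfix} gives $\AR(a,b) \leq b \vee \Box\AR(a,b)$, and meeting both sides with $\neg b$ collapses the $b$-disjunct.

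The main obstacle is item (6), which requires a well-chosen auxiliary element and one genuinely modal lemma. I would set $e := \neg c \vee \EU(a \wedge \Box c, b \wedge \Box c)$ and prove $\EU(a,b) \leq e$ via \eqref{EUmin}, noting that this is equivalent to the desired $\EU(a,b) \wedge c \leq \EU(a \wedge \Box c, b \wedge \Box c)$. Writing $d := \EU(a\wedge\Box c, b\wedge\Box c)$, verifying that $e$ is a pre-fixpoint of $x \mapsto a \vee (b \wedge \diam x)$ splits into two checks. For $a \leq e$, the hypothesis $a \wedge c \leq \Box c$ gives $a \wedge c \leq a \wedge \Box c \leq d$ (the last step by \eqref{EUfix}), hence $a \leq \neg c \vee d = e$. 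For $b \wedge \diam e \leq e$, i.e.\ $b \wedge c \wedge \diam e \leq d$, I would invoke the normal-modal inequality $\Box c \wedge \diam e \leq \diam(c \wedge e)$, an easy consequence of additivity of $\diam$ and $\diam\neg c = \neg\Box c$. Using $b \wedge c \leq b \wedge \Box c$ and $b \wedge c \leq \Box c$, together with $c \wedge e = c \wedge d \leq d$, one obtains $b \wedge c \wedge \diam e \leq (b \wedge \Box c) \wedge \diam d \leq d$, the last inequality being part of \eqref{EUfix} for $d$.

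The delicate points in item (6) are recognizing the right auxiliary element $e$ and keeping the factor $b \wedge \Box c$ intact while applying the modal distribution law, rather than discarding it too early; once these are in place, the rest reduces to bookkeeping with the fixpoint axioms. I expect the remaining five items to be essentially routine given the monotonicity observations at the start.
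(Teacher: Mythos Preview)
Your proposal is correct and follows essentially the same route as the paper: items (1)--(5) match the paper's arguments almost verbatim (the paper merely says (3) and (4) are ``clear'' from monotonicity of fixpoints, which you spell out), and for (6) both you and the paper set the auxiliary element $\neg c \vee \EU(a\wedge\Box c,\,b\wedge\Box c)$ and prove it is a pre-fixpoint via \eqref{EUmin}. The only cosmetic difference is in the $b\wedge\diam e$ step of (6): the paper splits $\diam(\neg c \vee d)$ by additivity and kills the $\diam\neg c$ summand using $b\wedge c \leq \Box c$, whereas you package the same computation as the normal-modal law $\Box c \wedge \diam e \leq \diam(c\wedge e)$; these are equivalent.
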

\begin{proof}
(1) The operator $\EU$ is clearly monotone, being the fixpoint of a monotone operation. It now suffices to prove $\EU(a \vee a',b) \leq \EU(a,b) \vee \EU(a',b)$. Write $c := \EU(a,b) \vee \EU(a',b)$. Distributing disjunctions over $\diam$ and conjunction with $b$, notice that
\[ (a \vee a') \vee (b \wedge \diam c) = [a \vee (b \wedge \diam \EU(a,b))] \vee [a' \vee (b \wedge \diam \EU(a',b))] \leq c,\]
where we use two applications of (\ref{EUfix}) for the last inequality. By (\ref{EUmin}), we conclude that $\EU(a \vee a', b) \leq c$, as required.

(2) follows from (1) since $\AR$ is the De Morgan dual of $\EU$.

(3) and (4) are clear since $\EG$ and $\AF$ are fixpoints of monotone operations.

(5) By (\ref{ARfix}), $\AR(a,b) \leq b \vee \Box \AR(a,b)$, from which the statement follows.

(6) Suppose that $a \wedge c \leq \Box c$ and $b \wedge c \leq \Box c$. Write $d := \neg c \vee \EU(a \wedge \Box c, b \wedge \Box c)$. We need to show that $\EU(a,b) \leq d$. For this, it suffices to prove, by (\ref{EUmin}), that $a \vee (b \wedge \diam d) \leq d$. Since $a \wedge c \leq a \wedge \Box c$ by assumption, we have $a \wedge c \leq \EU(a \wedge \Box c, b \wedge \Box c)$, so $a \leq d$. To prove that $b \wedge \diam d \leq d$, notice first that
\[ b \wedge c \wedge \diam d  = (b \wedge c \wedge \diam \neg c) \vee (b \wedge c \wedge \diam \EU(a \wedge \Box c,b \wedge \Box c)) = (b \wedge c \wedge \diam \EU(a \wedge \Box c,b \wedge \Box c)), \]
where the last equality holds because the assumption that $b \wedge c \leq \Box c$ gives $b \wedge c \wedge \diam \neg c = \bot$.

Moreover, $b \wedge c \leq b \wedge \Box c$, so by the rule (\ref{EUfix}) we obtain
\[ b \wedge c \wedge \diam \EU(a \wedge \Box c,b \wedge \Box c) \leq b \wedge \Box c \wedge \diam \EU(a \wedge \Box c,b \wedge \Box c)  \leq \EU(a \wedge \Box c, b \wedge \Box c).\]
We have proved that $b \wedge c \wedge \diam d \leq \EU(a \wedge \Box c, b \wedge \Box c)$, so $b \wedge \diam d \leq d$, as required.
\end{proof}

\begin{repproposition}{p:EUcAFcfixpoint}
For any elements $p,q,r$ of a $\CTLf$-algebra $\b A$, we have:
\begin{enumerate}
\item $\EU_c(p,q,r)$ is the least pre-fixpoint of the monotone function $x \mapsto p \vee (q \wedge \diam (r \wedge x))$, and
\item $\AF_c(p,q,r)$ is the least pre-fixpoint of the monotone function $x \mapsto p \vee \Box \AR(q \vee (r \wedge x),p)$.
\end{enumerate}
\end{repproposition}
\begin{proof}
1. Note that
\begin{align*} 
\EU_c(p,q,r) \wedge r &= (p \wedge r) \vee (q \wedge r \wedge \diam \EU(p \wedge r,q \wedge r)) \\
&= \EU(p \wedge r, q \wedge r),
\end{align*}
where the first equality holds by distributivity, and the second equality holds because $\EU(p',q')$ is a fixpoint of $x \mapsto p' \vee (q' \wedge \diam x)$. It follows immediately that $\EU_c(p,q,r)$ is a fixpoint of $x \mapsto p \vee (q \wedge \diam (r \wedge x))$. To prove that it is the least fixpoint, let $s$ be any element of $\b A$ such that $p \vee (q \wedge \diam (r \wedge s)) \leq s$. Then 
\[ (p \wedge r) \vee (q \wedge r \wedge \diam (r \wedge s)) = r \wedge (p \vee (q \wedge \diam (r \wedge s))) \leq r \wedge s.\]
Since $\EU(p',q')$ is a least fixpoint of $x \mapsto p' \vee (q' \wedge \diam x)$, it follows that $\EU(p \wedge r,q \wedge r) \leq r \wedge s$. Hence,
\[\EU_c(p,q,r) = p \vee (q \wedge \diam \EU(p \wedge r,q \wedge r)) \leq p \vee (q \wedge \diam(r \wedge s)) \leq s.\]

2. Write $\sigma$ for the function $x \mapsto p \vee \Box\AR(q \vee (r \wedge x),p)$. 

{\bf -- $\AF_c(p,q,r)$ is a pre-fixpoint of $\sigma$.}\\
Note first, since $\AF_c(p,q,r) \leq \AF(p,q)$, that 
\begin{align}\label{eq:AFcandAF}
p \vee \Box\AR(q \vee \AF_c(p,q,r),p) \leq p \vee \Box \AR(q \vee \AF(p,q),p) = \AF(p,q),
\end{align}
by the fixpoint definition of $\AF$.

Note, using the distributive law and Lemma~\ref{l:basicsyntfacts}.2, that
\begin{align*}
\sigma(\AF_c(p,q,r)) &= (p \vee \Box \AR(q \vee r,p)) \wedge (p \vee \Box \AR(q \vee \AF_c(p,q,r)),p) \\
&\leq (p \vee \Box \AR(q \vee r,p)) \wedge \AF(p,q) = \AF_c(p,q,r),
\end{align*}
where we use (\ref{eq:AFcandAF}) for the inequality from the first to the second line.\\

{\bf -- $\AF_c(p,q,r)$ is less than or equal to any pre-fixpoint of $\sigma$.}\\ 
Let $s \in A$ be such that $\sigma(p,q,r,s) \leq s$.
Write $\alpha := \AR(q \vee r, p)$.

\begin{claimfirst}\label{c:alphaineq}
$\AF(p \vee \diam \neg \alpha,q \vee s) \wedge \Box \alpha \leq s$.
\end{claimfirst}
\begin{pfclaim}~\ref{c:alphaineq}. 
Since $\sigma(p,q,r,s) \leq s$, we have
\begin{equation}\label{eq:xprefix1}
p \leq s,
\end{equation}
and
\begin{equation}\label{eq:xprefix2}
\Box \AR(q \vee (r \wedge s),p) \leq s. 
\end{equation}

Note that 
\begin{align*}
\neg q \wedge \neg s \wedge \alpha &\leq \neg p \wedge \alpha && (\text{by equation (\ref{eq:xprefix1}}))\\
&\leq \Box \alpha && (\text{by Lemma~\ref{l:basicsyntfacts}.5}).
\end{align*}
Therefore, by Lemma~\ref{l:basicsyntfacts}.6, we have
\begin{align}\label{eq:alphainside}
\neg \AR(q \vee s,p) \wedge \alpha = \EU(\neg q \wedge \neg s, \neg p) \wedge \alpha \leq \EU(\neg q \wedge \neg s \wedge \Box \alpha, \neg p \wedge \Box \alpha).
\end{align}
By De Morgan duality and applying $\Box$ on both sides, we obtain from (\ref{eq:alphainside}) that
\begin{align}\label{eq:alphainsidedual}
\Box \AR(q \vee s \vee \diam \neg \alpha, p \vee \diam \neg \alpha) \leq \Box(\AR(q \vee s,p) \vee \neg \alpha).
\end{align}
Since $\neg \alpha = \neg \AR(q \vee r,p)$ by definition, and $\AR(q \vee (r \wedge s),p) = \AR(q \vee r,p) \wedge \AR(q \vee s,p)$ by Lemma~\ref{l:basicsyntfacts}.2, we have
\begin{align}\label{eq:addr}
\Box(\AR(q \vee s,p) \vee \neg \alpha) = \Box(\AR(q \vee (r \wedge s),p) \vee \neg \alpha).
\end{align}
In any modal algebra, $\Box(a \vee b) \leq \Box a \vee \dia b$, so combining (\ref{eq:alphainsidedual}) and (\ref{eq:addr}), we obtain
\begin{align}
\Box \AR(q \vee s \vee \diam \neg \alpha, p \vee \diam \neg \alpha) &\leq \Box \AR(q \vee (r \wedge s),p) \vee \diam \neg \alpha \nonumber\\
&\leq s \vee \diam \neg \alpha &&(\text{by (\ref{eq:xprefix2})}).\label{eq:BoxAR}
\end{align}
From (\ref{eq:BoxAR}) and (\ref{eq:xprefix1}), we conclude that
\[ (p \vee \diam \neg \alpha) \vee \Box \AR(q \vee s \vee \diam \neg \alpha, p \vee \diam \neg \alpha) \leq s \vee \diam \neg \alpha.\]
The rule (\ref{AFmin}) now yields
\[ \AF(p \vee \diam \neg \alpha,q \vee s) \leq s \vee \diam \neg \alpha,\]
from which the claim is clear.
\end{pfclaim}
From the definitions of $\AF_c$ and $\alpha$, distributivity, and monotonicity of $\AF$, we obtain 
\begin{align*} 
\AF_c(p,q,r) &= \AF(p,q) \wedge (p \vee \Box \alpha) \\
&= (\AF(p,q) \wedge p) \vee (\AF(p,q) \wedge \Box\alpha) \\
&\leq p \vee (\AF(p \vee \diam \neg \alpha,q \vee s) \wedge \Box \alpha) \leq s,
\end{align*}
where the last inequality holds by (\ref{eq:xprefix1}) and Claim~\ref{c:alphaineq}.
\end{proof}

The following is a general lemma about least fixpoints; this is the version of \cite[Prop. 5.7(vi)]{Koz1983} that we need here.
\begin{lemma}\label{l:gencontrule}
Suppose that $S$ and $\sigma$ are $(n+1)$-ary operations on a Boolean algebra $\b{B}$ such that, for all $\overline{p} \in B^n$ and $r \in B$, 
\[S(\overline{p},r) \text{ is the least fixpoint of } x \mapsto \sigma(\overline{p},r \wedge x). \]
Then, for any $\overline{p} \in B^n$, $r, \gamma \in B$, 
\[ \text{if } \gamma \wedge S(\overline{p},r) \neq \bot, \text{ then } \gamma \wedge S(\overline{p},r \wedge \neg \gamma) \neq \bot.\]
\end{lemma}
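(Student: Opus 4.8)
The plan is to prove the statement by contraposition: assuming $\gamma \wedge S(\overline{p},r\wedge\neg\gamma) = \bot$, I will show $\gamma \wedge S(\overline{p},r) = \bot$. For the fixed tuple $\overline{p}$, I abbreviate $f(x) := \sigma(\overline{p}, r \wedge x)$ and $g(x) := \sigma(\overline{p}, (r\wedge\neg\gamma) \wedge x)$, so that by hypothesis $S(\overline{p},r)$ is the least fixpoint of $f$ and $S(\overline{p},r\wedge\neg\gamma)$ is the least fixpoint of $g$.

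First I would set $s := S(\overline{p}, r\wedge\neg\gamma)$ and rewrite the hypothesis $\gamma \wedge s = \bot$ as the inequality $s \leq \neg\gamma$. The key observation is then that, precisely because $s$ lies below $\neg\gamma$, the two contexts $r$ and $r\wedge\neg\gamma$ become indistinguishable when met with $s$: indeed, using $\neg\gamma\wedge s = s$, one has $(r\wedge\neg\gamma)\wedge s = r \wedge (\neg\gamma \wedge s) = r \wedge s$.

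From this the argument closes immediately. Since $s$ is a fixpoint of $g$, we have $s = g(s) = \sigma(\overline{p}, (r\wedge\neg\gamma)\wedge s) = \sigma(\overline{p}, r\wedge s) = f(s)$, so $s$ is in fact a fixpoint of $f$ as well. As $S(\overline{p},r)$ is the \emph{least} fixpoint of $f$, it follows that $S(\overline{p},r) \leq s \leq \neg\gamma$, whence $\gamma \wedge S(\overline{p},r) = \bot$, which is exactly the contrapositive of the desired implication. Note that this literal reading of ``least fixpoint'' needs no monotonicity hypothesis, since it only uses that $S(\overline{p},r)$ is below every fixpoint of $f$; if one instead reads ``least fixpoint'' as ``least pre-fixpoint'', the same reasoning applies verbatim, using the standard fact that the least pre-fixpoint of a monotone map is a genuine fixpoint.

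The proof involves essentially no computation, and the only point requiring care — the step I expect to be the crux — is recognising that contraposition is the right move and that the hypothesis $s \leq \neg\gamma$ is exactly what is needed to collapse the stronger context $r\wedge\neg\gamma$ down to $r$ along $s$, thereby promoting $s$ from a fixpoint of the $(r\wedge\neg\gamma)$-operator to a fixpoint of the $r$-operator. Once this is seen, minimality of $S(\overline{p},r)$ does the rest.
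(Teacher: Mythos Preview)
Your proof is correct and follows essentially the same approach as the paper: both argue by contraposition, use the hypothesis $s \leq \neg\gamma$ to rewrite $r \wedge \neg\gamma \wedge s = r \wedge s$, conclude that $s$ is a fixpoint of the $r$-operator, and then invoke minimality of $S(\overline{p},r)$. The paper's proof is slightly more terse (introducing $\delta := \neg\gamma$), but the logical content is identical.
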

\begin{proof}
Let $\overline{p} \in B^n$, $r, \gamma \in B$. Writing $\delta := \neg \gamma$, we may prove the contrapositive statement, which says that if $S(\overline{p},r \wedge \delta) \leq \delta$, then $S(\overline{p},r) \leq \delta$. Suppose that $S(\overline{p},r\wedge \delta) \leq \delta$. Then $S(\overline{p},r \wedge \delta) = \delta \wedge S(\overline{p},r \wedge \delta)$, so
\[ \sigma(\overline{p},r \wedge S(\overline{p},r \wedge \delta)) = \sigma(\overline{p},r \wedge \delta \wedge S(\overline{p},r \wedge \delta)) = S(\overline{p},r \wedge \delta),\]
where we use in the last equality the fact that $S(\overline{p},r \wedge \delta)$ is a fixpoint. Hence, since $S(\overline{p},r)$ is a \emph{least} fixpoint, we obtain $S(\overline{p},r) \leq S(\overline{p}, r \wedge \delta)$. Since $S(\overline{p}, r \wedge \delta) \leq \delta$ by assumption, we conclude that $S(\overline{p},r) \leq \delta$, as required.
\end{proof}

Combining Prop.~\ref{p:EUcAFcfixpoint} and Lemma~\ref{l:gencontrule} now immediately gives:
\begin{repproposition}{p:contextrules}
For any elements $p,q,r,\gamma$ of a $\CTLf$-algebra $\b{A}$, we have
\begin{enumerate}
\item if $\gamma \wedge \EU(p,q,r) \neq \bot$, then $\gamma \wedge \EU(p,q,r\wedge \neg \gamma) \neq \bot$,
\item if $\gamma \wedge \AF(p,q,r) \neq \bot$, then $\gamma \wedge \AF(p,q, r \wedge \neg \gamma) \neq \bot$.
\end{enumerate}
\end{repproposition}

\subsection{Proofs for Subsection~\ref{ss:preliminaries}}

\begin{replemma}{l:nnf}
Any $\CTLf$-formula is equivalent to a $\CTLf$-formula in negation normal form.
\end{replemma}
\begin{proof}
We first inductively define a `formal negation' $\overline{\phi}$ for any $\CTLf$-formula $\phi$:
\begin{itemize}
\item $\overline{\bot} := \top$,
\item $\overline{p} := \neg p$,
\item $\overline{\neg \phi} := \phi$,
\item $\overline{\phi \vee \psi} := \overline{\phi} \wedge \overline{\psi}$,
\item $\overline{\diam \phi} := \Box \overline{\phi}$,
\item $\overline{\EU(\psi_1,\psi_2)} := \AR(\overline{\psi_1},\overline{\psi_2})$,
\item $\overline{\EG(\psi_1,\psi_2)} := \AF(\overline{\psi_1},\overline{\psi_2})$.
\end{itemize}
Clearly, for any $\CTLf$-formula $\psi$, the formula $\neg \psi$ is equivalent to $\overline{\psi}$. Thus, given an arbitrary $\CTLf$-formula $\phi$, we may replace all negations that occur in $\phi$ by formal negations, after which we obtain an equivalent formula in negation normal form.
\end{proof}
Note that in this proof, we only needed binary operations $\EU$ and $\AF$ in the negation normal form. However, later in the completeness proof, the ternary operations $\EU$ and $\AF$ will come up, which is why we included them in the definition of negation normal form anyway.

\begin{replemma}{l:diamstep}
Let $\b{A}$ be a modal algebra with dual frame $\b{A}_*$. If $a \in A$, $x \in \b{A}_*$, and $\diam a \in x$, then there exists $y \in \b{A}_*$ such that $x {R_*} y$ and $a \in y$.
\end{replemma}
\begin{proof}
Note that the set $\{b \in A \ | \ \diam b \not\in x\}$ is an ideal in $\b{A}$ which does not contain $a$. By the ultrafilter principle, choose an ultrafilter $y$ containing $a$ which is disjoint from this ideal. By construction, $x {R_*} y$.
\end{proof}

\begin{replemma}{l:charform}
For any set of formulas $\rho$ and points $x, x' \in \b{A}_*$, we have
\[ x \sim_\rho x' \iff \kappa(x,\rho) \in x'.\]
\end{replemma}
\begin{proof}
Since $x'$ is an ultrafilter, we have $\kappa(x,\rho) \in x'$ iff for all $\gamma \in x \cap \rho$, $\gamma \in x'$, and for all $\gamma \in \rho \setminus x$, $\gamma \not\in x'$. The latter says precisely that $x \cap \rho = x' \cap \rho$.
\end{proof}

\begin{replemma}{l:jump}
Let $\rho$ be a finite set of formulas, let $\hs \in \{\EU,\AF\}$, and let $\phi$, $\psi$, and $\chi$ be formulas. For any $x \in \b{A}_*$ such that $\hs(\phi,\psi,\chi) \in x$, there exists $x' \in \b{A}_*$ such that $x \sim_\rho x'$ and $\hs(\phi,\psi,\chi \wedge \neg\kappa(x,\rho)) \in x'$.
\end{replemma}
\begin{proof}
Since $\kappa(x,\rho) \wedge \hs(\phi,\psi,\chi) \in x$, we have $\kappa(x,\rho) \wedge \hs(\phi,\psi,\chi) \neq \bot$. By Proposition~\ref{p:contextrules}, $\kappa(x,\rho) \wedge \hs(\phi,\psi,\chi \wedge \neg \kappa(x,\rho)) \neq \bot$. By the Stone ultrafilter theorem, pick $x'$ such that $\kappa(x,\rho) \wedge \hs(\phi,\psi,\chi \wedge \neg \kappa(x,\rho)) \in x'$. By Lemma~\ref{l:charform}, $x \sim_\rho x'$.
\end{proof}

\begin{replemma}{l:finiteclosure}
The closure of a finite set of $\CTLf$-formulas is finite.
\end{replemma}
\begin{proof}
Given a finite set of $\CTLf$ formulas, $\Gamma$, define the set $\Gamma'$ obtained by first applying each of the rules of Definition~\ref{d:closure} to elements of $\Gamma$, and then adding all subformulas. The set $\Gamma'$ is easily seen to be closed and finite.
\end{proof}

\subsection{Proofs for Subsection~\ref{ss:model}}

\begin{lemma} \label{l:AFunravel}
For any elements $p,q,r$ of a $\CTLf$-algebra $\b{A}$, we have
\[\AF_c(p,q,r) = p \vee \Box((q \vee r) \wedge \AF_c(p,q,r)).\]
\end{lemma}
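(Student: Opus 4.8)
The plan is to exploit two fixpoint characterizations that feed into one another: the least-fixpoint description of $\AF_c$ from Proposition~\ref{p:EUcAFcfixpoint} and the greatest-fixpoint unfolding of $\AR$. Throughout I write $\theta := \AF_c(p,q,r)$ for brevity. First I would invoke Proposition~\ref{p:EUcAFcfixpoint}(2), which states that $\theta$ is the least pre-fixpoint of the monotone map $x \mapsto p \vee \Box\AR(q\vee(r\wedge x),p)$. A least pre-fixpoint of a monotone operator is automatically a fixpoint (if $f(\theta)\leq\theta$ then $f(f(\theta))\leq f(\theta)$ by monotonicity, so $f(\theta)$ is also a pre-fixpoint and minimality forces $\theta\leq f(\theta)$). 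This yields the fixpoint equation
\begin{equation}
\theta = p \vee \Box\,\AR(q\vee(r\wedge\theta),p). \tag{$*$}
\end{equation}

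The key step is then to simplify the inner $\AR$-term using its own fixpoint equation. Since $\AR(a,b)$ is the greatest post-fixpoint of $c \mapsto a\wedge(b\vee\Box c)$ (cf. the rules (\ref{ARfix}), (\ref{ARmax})), the same remark about post-fixpoints gives $\AR(a,b) = a\wedge(b\vee\Box\AR(a,b))$. Applying this with $a := q\vee(r\wedge\theta)$ and $b := p$ produces
\[ \AR(q\vee(r\wedge\theta),p) = (q\vee(r\wedge\theta))\wedge\bigl(p\vee\Box\AR(q\vee(r\wedge\theta),p)\bigr). \]
The crucial observation is that the second conjunct on the right is exactly the right-hand side of $(*)$, hence equals $\theta$. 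Substituting it in and simplifying by distributivity together with the idempotence $(r\wedge\theta)\wedge\theta = r\wedge\theta$, I compute
\[ \AR(q\vee(r\wedge\theta),p) = (q\vee(r\wedge\theta))\wedge\theta = (q\wedge\theta)\vee(r\wedge\theta) = (q\vee r)\wedge\theta. \]

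Finally I would feed this identity back into $(*)$ to conclude
\[ \theta = p \vee \Box\bigl((q\vee r)\wedge\theta\bigr), \]
which is precisely the asserted equation $\AF_c(p,q,r) = p \vee \Box((q\vee r)\wedge\AF_c(p,q,r))$. The only nonroutine point, and thus the main obstacle, is recognizing that the two fixpoint equations interlock: rather than trying to prove the two inequalities of the claim separately, one substitutes the $\AF_c$-fixpoint equation $(*)$ into the $\AR$-unfolding, which collapses the nested $\AR$ into the simple term $(q\vee r)\wedge\theta$. Everything else is a short Boolean-algebra calculation.
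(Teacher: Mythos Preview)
Your proof is correct and follows essentially the same approach as the paper: both invoke Proposition~\ref{p:EUcAFcfixpoint} to obtain the fixpoint equation $(*)$, then unfold $\AR$ via (\ref{ARfix}), recognize that the second conjunct is precisely the right-hand side of $(*)$, and simplify $(q\vee(r\wedge\theta))\wedge\theta$ to $(q\vee r)\wedge\theta$ before substituting back. The only difference is that you spell out why least pre-fixpoints and greatest post-fixpoints are genuine fixpoints, whereas the paper takes this for granted.
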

\begin{proof}
Write $t := q \vee (r \wedge \AF_c(p,q,r))$. By Proposition~\ref{p:EUcAFcfixpoint}, $\AF_c(p,q,r) = p \vee \Box \AR(t,p)$. Using this fact and (\ref{ARfix}), we have
\[ \AR(t,p) = t \wedge (p \vee \Box\AR(t,p)) = t \wedge \AF_c(p,q,r) = (q \vee r) \wedge \AF_c(p,q,r).\]
Therefore,
\[ \AF_c(p,q,r) = p \vee \Box \AR(t,p) = p \vee \Box((q \vee r) \wedge \AF_c(p,q,r)).\vspace{-5mm}\]
\end{proof}

\begin{replemma}{l:wellformed}
The one-step unravelling of a well-formed partial tableau $T$ is well-formed. 
\end{replemma}
\begin{proof}
All conditions in Definition~\ref{d:wellformed} except for (g) follow immediately from the definitions. Let $v$ be a leaf of $T$, $m$ the active index at $v$, and $w_\lambda$ a child of $v$ in the one-step unravelling of $T$. Let $1 \leq k \leq |\ell(w_\lambda)|$ be such that $\sigma(w_\lambda)_k \neq \e$. We need to show that $\hs_k(\phi_k,\psi_k,\chip(w_\lambda)_k) \in \alpha(w_\lambda)$. We distinguish three cases:
\begin{itemize}
\item {\bf Case $k < m$.} By definition of $m$, we have $\sigma(v)_k \neq \a$, but $\sigma(w_\lambda)_k \in \{\a,\f\}$. This can only happen when $\sigma(v)_k = \f$, so in particular $\hs_k = \AF$. By well-formedness of $T$, we have $\AF(\phi_k,\psi_k,\chip(v)_k) \in \rho_m \cap \alpha(v)$, so $\AF(\phi_k,\psi_k,\chip(v)_k) \in x_v$. Also, since $\sigma(v)_k \neq \e$, we have $\neg \phi_k \in x_v$. From Lemma~\ref{l:AFunravel}, it follows that $\AF(p,q,r) \wedge \neg p \leq \Box \AF(p,q,r)$, so we obtain $\AF(\phi_k,\psi_k,\chip(v)_k) \in \alpha(w_\lambda)$. This is enough, since $\chip(w_\lambda)_k = \chip(v)_k$ by definition.
\item {\bf Case $k = m$.} If $\hs_m = \EU$, then $\sigma(w_\lambda)_m \neq \e$ only if $\lambda = \chi_m \wedge \EU(\phi_m,\psi_m,\chi_m)$, and it is true by construction that $\EU(\phi_m,\psi_m,\chip(w_\lambda)_m) \in \alpha(w_\lambda)$. If $\hs_m = \AF$, note that $\AF(\phi_m,\psi_m,\chip(w_\lambda)_m) \in x_v$ by construction. Also, $\neg \phi_m \in x_v$ since $\sigma(v)_m = \a$. Again using Lemma~\ref{l:AFunravel}, $\AF(\phi_m,\psi_m,\chip(w_\lambda)_m) \in \alpha(w_\lambda)$.
\item {\bf Case $k > m$.} If $\hs_k = \EU$, then, since $\sigma(w_\lambda)_k \neq \e$, by rule (5) in Definition~\ref{d:onestepunravel}, we must have $\lambda = \chi_k \wedge \EU(\phi_k,\psi_k,\chi_k)$. In particular, $\EU(\phi_k,\psi_k,\chi_k) \in \alpha(w_\lambda)$. 
If $\hs_k = \AF$, then, since $\chip(v)_k \leq \chi_k$ and $\AF(\phi_k,\psi_k,\chip(v)_k) \in \alpha(v)$ by well-formedness of $T$, we have $\AF(\phi_k,\psi_k,\chi_k) \in \alpha(v)$. Since $\sigma(v)_k \neq \e$, we have $\neg \phi_k \in \alpha(v)$. Since $\AF(\phi_k,\psi_k,\chi_k)$ and $\neg \phi_k$ lie in $\Gamma_0 \subseteq \rho(v)_k$, we also have $\AF(\phi_k,\psi_k,\chi_k)$ and $\neg \phi_k$ in $x_v$. As before, $\Box\AF(\phi_k,\psi_k,\chi_k) \in x_v$, so $\AF(\phi_k,\psi_k,\chi_k) \in \alpha(w_\lambda)$.
\end{itemize}
\vspace{-5mm}
\end{proof}

\begin{replemma}{l:truthlemma}[Truth Lemma]
For all $\theta \in \Gamma_0$ and $v \in S$, if $\theta \in \alpha(v)$, then $v \forces \theta$.
\end{replemma}

\begin{proof}
By induction on the complexity of $\theta$. The base cases $\theta = p$ and $\theta = \neg p$ are clear, and the cases $\theta = \theta_1 \vee \theta_2$ and $\theta = \theta_1 \wedge \theta_2$ are immediate from the induction hypothesis.

\vspace{3mm}

$\theta = \diam \lambda$. Suppose that $\diam \lambda \in \alpha(v)$. Since $\diam \lambda \in \Gamma_0$, by construction $v$ has a child $w_\lambda$ with $\lambda \in \alpha(w)$. The induction hypothesis gives $w_\lambda \forces \lambda$.

\vspace{3mm}

$\theta = \Box \lambda$. If there is a successor $w$ of $v$ such that $w \forces \neg \lambda$, then by the induction hypothesis applied to $\lambda \in \Gamma_0$, we have $\lambda \not\in \alpha(w)$, so $\neg \lambda \in \alpha(w)$ since $\alpha(w)$ is an ultrafilter. Since $\alpha(w)$ is an $R_*$-successor of $x_v$, we get $\diam \neg\lambda \in x_v$, so $\Box \lambda = \neg\diam\neg\lambda \not\in x_v$ since $x_v$ is an ultrafilter. Since $x_v \sim_{\Gamma_0} \alpha(v)$, we have $\Box \lambda \not\in \alpha(v)$.

\vspace{3mm}

$\theta = \EU(\phi,\psi,\chi)$. Suppose that $\EU(\phi,\psi,\chi) \in \alpha(v)$. We need to show that $v \forces \EU(\phi,\psi,\chi)$, i.e., that $v \forces \phi \vee (\psi \wedge \diam \EU(\phi \wedge \chi,\psi \wedge \chi))$.
If $\phi \in \alpha(v)$, then $v \forces \phi$ by the induction hypothesis and we are done. Otherwise, we have $\neg \phi \in \alpha(v)$, and since $\EU(\phi,\psi,\chi) \wedge \neg \phi \leq \psi \wedge \diam (\chi \wedge \EU(\phi,\psi,\chi))$ by Proposition~\ref{p:EUcAFcfixpoint}, we have $\psi \wedge \diam (\chi \wedge \EU(\phi,\psi,\chi)) \in \alpha(v)$. In particular, $\psi \in \alpha(v)$, and $v \forces \psi$ by the induction hypothesis. Also, since $\diam(\chi \wedge \EU(\phi,\psi,\chi)) \in \alpha(v) \cap \Gamma_0$, there is a child $v_0$ of $v$ such that $\chi \wedge \EU(\phi,\psi,\chi) \in \alpha(v_0)$. By the induction hypothesis, $v_0 \forces \chi$. We will show that $v_0 \forces \EU(\phi \wedge \chi,\psi \wedge \chi)$, by exhibiting a finite path $v_0, \dots, v_\ell$ such that $v_t \forces \psi \wedge \chi$ for all $t < \ell$ and $v_\ell \forces \phi \wedge \chi$.

If $\phi \in \alpha(v_0)$, we are done immediately since then $v_0 \forces \phi$ by the induction hypothesis, and we already saw that $v_0 \forces \chi$. 
Assume $\phi \not\in \alpha(v_0)$, so $\neg \phi \in \alpha(v_0)$.  By rule (1) in Definition~\ref{d:onestepunravel}, since $\EU(\phi,\psi,\chi) \in \alpha(v_0)$, there exists $1 \leq k \leq \ell(v)$ such that $\theta(v_0)_k = \EU(\phi,\psi,\chi)$ and $\sigma(v_0)_k = \a$.\footnote{If this value has been changed to $\e$ by chance because of
rule (5) in Definition~\ref{d:onestepunravel}, then since $\diam(\chi
\wedge \EU(\phi,\psi,\chi)) \in \alpha(v_0) \cap \Gamma_0$, we can
replace $v_0$ by its  successor $w_{\lambda}\in C_{v_0}$, for 
$\lambda:=\chi \wedge \EU(\phi,\psi,\chi)$: to this node $w_{\lambda}$,
rule (5) for $\lambda$ does not apply and so, if we still have that
$\neg \phi \in \alpha(w_{\lambda})$, then there exists $1 \leq k \leq
\ell(v_0)$ such that $\theta(w_{\lambda})_k = \EU(\phi,\psi,\chi)$ and
$\sigma(w_{\lambda})_k = \a$.
}  
Also, since in general $\EU(p,q,r) \wedge \neg p \leq q$, we have $\psi \in \alpha(v_0)$. 

Suppose by induction that we have constructed a finite path $v_0, \dots, v_\ell$ such that $\psi \wedge \chi \wedge \EU(\phi, \psi, \chi) \wedge \neg \phi \in \alpha(v_t)$ and $\sigma(v_t)_k = \a$ for all $t \leq \ell$.
Then $\diam (\chi \wedge \EU(\phi,\psi,\chi)) \in \alpha(v_\ell) \cap \Gamma_0$, so by construction of the tableau, there is a successor $v_{\ell+1} = w_{\chi \wedge \EU(\phi,\psi,\chi)}$ of $v_\ell$ such that $\chi \wedge \EU(\phi,\psi,\chi) \in \alpha(v_{\ell+1})$. If $\phi \in \alpha(v_{\ell+1})$ we are done, otherwise we will have again $\psi \wedge \chi \wedge \EU(\phi,\psi,\chi) \wedge \neg \phi \in \alpha(v_{\ell+1})$ and $\sigma(v_{\ell+1})_k = \a$.
If, by continuing this process, we would never reach a node $v_\ell$ with $\phi \in \alpha(v_\ell)$, we would obtain an infinite path $(v_t)_{t=0}^\infty$ starting in $v_0$ with $\sigma(v_t)_k = \a$ for all $t \geq 0$, which is impossible by Lemma~\ref{l:noloops}.

\vspace{3mm}

$\theta = \EG(\phi,\psi)$. Suppose that $\EG(\phi,\psi) \in \alpha(v)$. We construct an infinite path with $\phi$ holding everywhere and $\psi$ holding infinitely often. Let $v_0 := v$. Since $\EG(\phi,\psi) \in \alpha(v_0)$, we have $\phi \in \alpha(v_0)$, so $v_0 \forces \phi$ by the induction hypothesis. Since $\Gamma_0$ is closed, $\diam \EU(\psi \wedge \EG(\phi,\psi),\phi) \in \alpha(v) \cap \Gamma_0$. By construction, there exists a child $v_1$ of $v$ such that $\EU(\psi \wedge \EG(\phi,\psi),\phi) \in \alpha(v_1)$. By the $\EU$-case, pick a finite path $v_1, \dots, v_{t_1}$ such that $\phi \in \alpha(v_t)$ for all $1 \leq t < t_1$ and $\psi \wedge \EG(\phi,\psi) \in \alpha(v_{t_1})$. By the induction hypothesis, $v_t \forces \phi$ for all $1 \leq t < t_1$ and $v_{t_1} \forces \psi$. Continuing this process, we obtain an infinite path $(v_t)_{t=0}^\infty$ and an infinite sequence $(t_i)_{i=1}^\infty$ such that $v_t \forces \phi$ for all $t$ and $v_{t_i} \forces \psi$ for all $i$.

\vspace{3mm}

$\theta = \AR(\phi,\psi)$. Suppose that $v \not\forces \AR(\phi,\psi)$. Pick a finite $R$-path $v = v_0, \dots, v_\ell$ such that $v_t \forces \neg \psi$ for all $t < \ell$ and $v_\ell \forces \neg \phi$. We prove that $\AR(\phi, \psi) \not\in \alpha(v_t)$ for each $t \in [0,\ell]$. First note that since $v_\ell \not\forces \phi$, the induction hypothesis for $\phi$ gives $\phi \not\in \alpha(v_\ell)$. Since $\AR(\phi,\psi) \leq \phi$, we get $\AR(\phi,\psi) \not\in \alpha(v_\ell)$. Now suppose that $\AR(\phi,\psi) \not\in \alpha(v_t)$ for some $0 < t \leq \ell$. Since $v_{t-1} \not\forces \psi$, we have $\psi \not\in \alpha(v_{t-1})$ by the induction hypothesis on $\psi$. Also, since $x_{v_{t-1}} {R_*}\alpha(v_t)$, we have $\Box\AR(\phi,\psi) \not\in x_{v_{t-1}}$. Since $\Box\AR(\phi,\psi) \in \Gamma_0$ and $x_{v_{t-1}} \sim_{\Gamma_0} \alpha(v_{t-1})$, we get $\Box\AR(\phi,\psi) \not\in \alpha(v_{t-1})$. Therefore, $\psi \vee \Box \AR(\phi,\psi) \not\in \alpha(v_{t-1})$, so $\AR(\phi,\psi) \not\in \alpha(v_{t-1})$. We conclude that $\AR(\phi,\psi) \not \in \alpha(v_0)$, as required.

\vspace{3mm}

$\theta = \AF(\phi,\psi,\chi)$. Suppose that $\AF(\phi,\psi,\chi) \in \alpha(v)$. We need to show that (i) $v \forces \AF(\phi,\psi)$ and (ii) $v \forces \phi \vee \Box \AR(\psi \vee \chi,\phi)$.

(i) Let $(v_t)_{t=0}^\infty$ be an infinite $R$-path with $v_0 = v$. We need to show that either $v_t \forces \phi$ for some $t \geq 0$, or the path is not $\neg\psi$-fair, i.e., there exists $\tilde{t}$ such that $v_t \forces \psi$ for all $t \geq \tilde{t}$.

Suppose that $v_t \not\forces \phi$ for all $t \geq 0$. 
By the induction hypothesis, $\phi \not\in \alpha(v_t)$ for all $t \geq 0$.
By rule (1) in Definition~\ref{d:onestepunravel}, there exists $1 \leq k \leq \ell(v)$ such that $\theta(v)_k = \AF(\phi,\psi,\chi)$. For all $t \geq 0$, since $\phi \not\in \alpha(v_t)$, we have $\sigma(v)_k \neq \e$. By Lemma~\ref{l:noloops}, pick $\tilde{t} \geq 0$ such that, for all $t \geq \tilde{t}$, we have $\sigma(v_{t})_k = \f$, and $\sigma(v_{t})_{k'} \neq \a$ for all $k' < k$. In particular, for all $t \geq \tilde{t}$, we have that $k < m_{v_{t}}$, the active index in the one-step unravelling at $v_{t}$. Hence, for all $t \geq \tilde{t}$, we must have $\psi \in \alpha(v_{t})$, for otherwise we would get $\sigma(v_{t+1}) = \a$ by rule (7) in Definition~\ref{d:onestepunravel}. By the induction hypothesis, $v_t \forces \psi$ for all $t \geq \tilde{t}$.

(ii) Suppose that $v \not\forces \phi$. We need to show that $v \forces \Box\AR(\psi \vee \chi,\phi)$. By the induction hypothesis, $\phi \not\in \alpha(v)$, so $\neg \phi \in \alpha(v)$. Since also $\AF(\phi,\psi,\chi) \in \alpha(v)$, we obtain $\Box \AR(\psi \vee \chi,\phi) \in \alpha(v)$. Let $w$ be a successor of $v$. Then $\AR(\psi \vee \chi,\phi) \in \alpha(w)$, since $\alpha(w)$ is an $R_*$-successor of $x_v$, $x_v \sim_{\Gamma_0} \alpha(v)$, and $\Box \AR(\psi \vee \chi,\phi) \in \Gamma_0$. By the argument from the case $\theta = \AR(\phi,\psi)$ (see above), we get $w \forces \AR(\psi \vee \chi,\phi)$, as required.
\end{proof}

\begin{replemma}{l:noloops}
For all $v \in S$, $1 \leq k \leq \ell(v)$, and infinite $R$-paths $(v_t)_{t=0}^\infty$ with $v_0 = v$, there exists $t \geq 0$ such that 
\begin{itemize}
\item if $\hs(v)_k = \EU$, then $\sigma(v_t)_k = \e$.
\item if $\hs(v)_k = \AF$, then either $\sigma(v_t)_k = \e$, or for all $t' \geq t$, $\sigma(v_{t'})_k = \f$.
\end{itemize}
\end{replemma}
\begin{proof}
\newcommand{\tz}{{\tilde{t}}}
Let $v \in S$ and $(v_t)_{t = 0}^\infty$ an infinite $R$-path with $v_0 = v$. For $t \geq 0$, we define
\[ m_t := \min \{1 \leq k \leq \ell(v_t) \ | \ \sigma(v_t)_k = \a\},\]
i.e., $m_t$ is the active index at $v_t$ in its one-step unravelling.
We first prove the following claim.
\begin{claimfirst}\label{c:finiteAk}
For each $k \in \{1,\dots,\ell(v)\}$, the set 
\[ A_k := \{ u \geq 0 \ | \ \sigma(v_u)_k = \a\}\]
is finite.
\end{claimfirst}

\begin{pfclaim} \ref{c:finiteAk}.
By induction on $k$. Assume $A_{k'}$ is finite for all $1 \leq k' < k$. Choose $\tz \geq 0$ such that $\bigcup_{k'=1}^{k-1} A_{k'}$ is contained in $[0,\tz-1]$. (In particular, if $k = 1$, we may simply choose $\tz = 0$.) Thus, for any $t \geq \tz$, we have $\sigma(v_t)_{k'} \neq \a$ for all $k' < k$, so $m_t \geq k$. Therefore, for all $t \geq \tz$, $\rho(v_t)_{k} = \rho(v_\tz)_{k}$, because in the one-step unravelling of the partial tableau, the relevance set at position $k$ can only be updated when $m_t < k$. Write $\rho := \rho(v_\tz)_{k}$. We now prove the following.

\begin{claim}\label{c:distincttypes}
For any distinct $u, u'$ in $A_k \cap [\tz,\infty)$, $\alpha(v_u)$ and $\alpha(v_{u'})$ have distinct $\rho$-types. 
\end{claim}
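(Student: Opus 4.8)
The plan is to deduce the distinctness of $\rho$-types from a single membership statement about the later node, namely that $\kappa(\alpha(v_u),\rho)\notin\alpha(v_{u'})$; by Lemma~\ref{l:charform} this is exactly $\alpha(v_u)\not\sim_\rho\alpha(v_{u'})$. Throughout I will use that for every $t\in A_k\cap[\tz,\infty)$ the active index $m_t$ equals $k$: indeed $m_t\geq k$ by the choice of $\tz$, while $\sigma(v_t)_k=\a$ forces $m_t\leq k$. I will obtain the target membership by combining two facts about the later index $u'$: (i) the context formula entails the negated characteristic formula, $\chip(v_{u'})_k\vdash\neg\kappa(\alpha(v_u),\rho)$; and (ii) the context formula itself lies in the ultrafilter, $\chip(v_{u'})_k\in\alpha(v_{u'})$. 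Granting (i) and (ii), the ultrafilter $\alpha(v_{u'})$ contains $\neg\kappa(\alpha(v_u),\rho)$, hence omits $\kappa(\alpha(v_u),\rho)$, which is what is needed.

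For (i) I would track $\chip(\cdot)_k$ along the path from $v_u$ to $v_{u'}$ using rule~(2) of Definition~\ref{d:onestepunravel}. At step $u$ the active index is $m=k$, so $\chip(v_{u+1})_k=\chip(v_u)_k\wedge\neg\gamma_{v_u}$ with $\gamma_{v_u}=\kappa(\alpha(v_u),\rho_m)=\kappa(\alpha(v_u),\rho)$, the last equality holding because $\rho_m=\rho(v_u)_k=\rho$. For every later step $t\in(u,u')$ the active index is $\geq k$, so position $k$ never enters the resetting branch ``$k>m$'' of rule~(2); it is either left unchanged or further strengthened by a conjunct of the form $\neg\gamma_{v_t}$. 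Hence $\chip(v_{u'})_k\vdash\chip(v_{u+1})_k\vdash\neg\kappa(\alpha(v_u),\rho)$. Here it is essential that $\rho(v_t)_k=\rho$ for all $t\geq\tz$, established just above, since it is precisely this that excludes the resetting branch.

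Fact (ii) is where the real work lies, and I expect it to be the main obstacle. I would prove it by inspecting the predecessor $v_{u'-1}$ (which exists, with $u'-1\geq\tz$) and splitting on $\hs_k$. If $\hs_k=\EU$, then $\sigma(v_{u'})_k=\a$ means rules~(4) and~(5) did not extinguish position $k$, so the path follows the child for $\lambda=\chi_k\wedge\EU(\phi_k,\psi_k,\chi_k)$; since the active index at $v_{u'-1}$ is $k$, the $\EU$-branch of Definition~\ref{d:onestepunravel} places $\chip(v_{u'-1})_k\wedge\neg\gamma_{v_{u'-1}}=\chip(v_{u'})_k$ into $\alpha(v_{u'})$. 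If $\hs_k=\AF$, I would use Lemma~\ref{l:AFunravel} in the form $\AF(p,q,r)\wedge\neg p\leq\Box((q\vee r)\wedge\AF(p,q,r))$. Since $\sigma(v_{u'-1})_k\neq\e$, Definition~\ref{d:wellformed}(b) gives $\phi_k\notin\alpha(v_{u'-1})$, whence $\neg\phi_k\in x_{v_{u'-1}}$, as $\phi_k\in\Gamma_0$ and the jump from $\alpha(v_{u'-1})$ to $x_{v_{u'-1}}$ preserves $\Gamma_0$-types. The eventuality $\AF(\phi_k,\psi_k,\chip(v_{u'-1})_k)$ also lies in $x_{v_{u'-1}}$: directly, if the active index at $v_{u'-1}$ is $k$, because the jump places the strengthened eventuality there; and, in the reactivation case where $v_{u'-1}$ is frozen and the active index is some $m>k$, because condition~(f) of Definition~\ref{d:wellformed} puts this eventuality into $\rho(v_{u'-1})_m$ while the jump preserves $\rho(v_{u'-1})_m$-types. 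Applying Lemma~\ref{l:AFunravel}, every successor of $x_{v_{u'-1}}$, in particular $\alpha(v_{u'})$, contains $\psi_k\vee\chip(v_{u'})_k$; and since $v_{u'}$ is active rather than frozen, rules~(6) and~(7) of Definition~\ref{d:onestepunravel} force $\psi_k\notin\alpha(v_{u'})$, so the remaining disjunct $\chip(v_{u'})_k$ must belong to $\alpha(v_{u'})$.

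Combining (i) and (ii) yields $\alpha(v_u)\not\sim_\rho\alpha(v_{u'})$, proving the claim. Finally I would close out Claim~\ref{c:finiteAk}: since $\rho\supseteq\Gamma_0$ is finite there are only finitely many $\rho$-types, so $A_k\cap[\tz,\infty)$ is finite and therefore $A_k$ is finite. The two bullets of Lemma~\ref{l:noloops} then drop out, because for $\EU$ the status $\e$ is absorbing, while for $\AF$ no index beyond $\max A_k$ is active, so the status is eventually $\e$ or permanently $\f$.
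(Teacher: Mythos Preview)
Your proposal is correct and follows essentially the same route as the paper's own proof: establish that $\chip(v_{u'})_k$ (or, in the paper's version, the weaker $\chip(v_u)_k\wedge\neg\gamma_{v_u}$) entails $\neg\kappa(\alpha(v_u),\rho)$, show this formula lies in $\alpha(v_{u'})$ by a case split on $\hs_k$, and conclude via Lemma~\ref{l:charform}. The organization differs only cosmetically: the paper writes $u$ and $u+d$ and, in the $\AF$ case, splits into three sub-cases ($d=1$; $d>1$ with $m_{u+d-1}=k$; $d>1$ with $m_{u+d-1}>k$), tracking the specific formula $\pi=\AF(\phi_k,\psi_k,\chip(v_u)_k\wedge\neg\gamma_{v_u})$ through the relevance set via rule~(3), whereas you split simply on whether the active index at $v_{u'-1}$ is $k$ or strictly greater, and invoke well-formedness condition~(f) directly---arguably a cleaner bookkeeping.
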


\begin{pfclaim} \ref{c:distincttypes}. Let $u, u + d \in A_k$ for some $u \geq \tz$ and $d > 0$. Since $m_u \geq k$ and $\sigma(v_u)_k = \a$, we have $m_u = k$, so $\chip(v_{u+1})_k = \chip(v_u)_k \wedge \neg \gamma_{v_u}$. Also, since $m_{u+t} \geq k$ for all $0 \leq t \leq d$, we have $\chip(v_{u+1})_k \geq \cdots \geq \chip(v_{u+d})_k$.
\begin{itemize}
\item {\bf Case $\hs_k = \EU$.} Since $\sigma(v_{u+d}) = \a$, we must have $\sigma(v_{u + t})_k = \a$ for all $0 \leq t \leq d$. 
Since $m_t \geq k$ if $t \geq \tz$, we get $m_u = m_{u+1} = \cdots = m_{u+d} = k$. By the construction of the one-step unravelling, case $\hs_m = \EU$, we then obtain $\chip(v_{u+t})_k \in \alpha(v_{u+t})$ for all $1 \leq t \leq d$. Moreover, $\neg \gamma_{v_u} \geq \chip(v_{u+1})_k \geq \chip(v_{u+d})_k$, so $\neg \gamma_{v_u} \in \alpha(v_{u+d})$. Since $\gamma_{v_u} = \kappa(u,\rho)$, Lemma~\ref{l:charform} gives that $\alpha(v_{u+d}) \not\sim_{\rho} \alpha(v_u)$.
\item {\bf Case $\hs_k = \AF$.} Let us write $\pi$ for the formula $\AF(\phi_k,\psi_k,\chip(v_u)_k \wedge \neg \gamma_{v_u})$, and $x$ for $x_{v_{u+d-1}}$. We show first that $\pi \in x$, by distinguishing three sub-cases.
\begin{itemize}
\item If $d = 1$, then $\pi \in x$ by construction. 
\item If $d > 1$ and $m_{u+d-1} = k$, so $\sigma(v_{u+d-1})_k = \a$, then by the choice of $x$ we have $\AF(\phi_k,\psi_k,\chip(v_{u+d-1})_k \wedge \neg \gamma_{v_{u+d-1}}) \in x$. Note that $\chip(v_{u+d-1})_k \leq \chip(v_{u+1})_k = \chip(v_u)_k \wedge \neg \gamma_{v_u}$, so $\pi \in x$.
\item If $d > 1$ and $m_{u+d-1} > k$, then $\chip(v_{u+d-1})_k \leq \chip(v_u)_k \wedge \neg \gamma_{v_u}$, and also, by well-formedness, we have $\AF(\phi_k,\psi_k,\chip(v_{u+d-1})_k) \in \alpha(v_{u+d-1})$. In particular, $\pi \in \alpha(v_{u+d-1})$. Note that 
 $\pi \in \rho(v_{u+d-1})_{m_{u+d-1}}$, because, in the one-step unravelling of the node $v_u$, $\pi$ was added to all relevance sets $\rho(v_{u+1})_{k'}$ for $k' > k = m_u$, by rule (3) in Definition~\ref{d:onestepunravel}, and thus $\pi$ also lies in any relevance sets that appeared later, by rule (1). Therefore, since $\alpha(v_{u+d-1})$ and $x$ have the same type with respect to $\rho(v_{u+d-1})_{m_{u+d-1}}$, we obtain $\pi \in x$.
\end{itemize}
Since $\sigma(v_{u+d})_k = \a$, we must have $\neg \phi_k \in \alpha(v_{u+d-1})$, so $\neg \phi_k \in x$. Applying Lemma~\ref{l:AFunravel}, $\pi \wedge \neg \phi_k \leq \Box(\psi_k \vee (\chip(v_u)_k \wedge \neg \gamma_{v_u}))$. In particular, $\psi_k \vee (\chip(v_u)_k \wedge \neg \gamma_{v_u})) \in \alpha(v_{u+d})$, since $\alpha(v_{u+d})$ is an $R_*$-successor of $x$. Now, because $\sigma(v_{u+d})_k \neq \f$, we must have $\psi_k \not\in \alpha(v_{u+d})$ by rule (6) in Definition~\ref{d:onestepunravel}. Therefore, $\chip(v_u)_k \wedge \neg \gamma_{v_u} \in \alpha(v_{u+d})$. In particular, $\neg \gamma_{v_u} \in \alpha(v_{u+d})$, so $\alpha(v_{u+d}) \not\sim_{\rho} \alpha(v_{u})$ by Lemma~\ref{l:charform}.
\end{itemize}
This concludes the proof of Claim~\ref{c:distincttypes}.
\end{pfclaim}

From Claim~\ref{c:distincttypes}, since only $2^{|\rho|}$ $\rho$-types exist, it follows that $|A_k \cap [\tz, \infty)| \leq 2^{|\rho_0|}$. Since $A_k \subseteq [0,\tz] \cup (A_k \cap [\tz,\infty))$, from this we can conclude that $|A_k| \leq \tz + 2^{|\rho_0|}$. This concludes the proof of Claim~\ref{c:finiteAk}.
\end{pfclaim}
By Claim~\ref{c:finiteAk}, define $t_0 := \max{A_k} + 1$. If $\hs_k = \EU$, then $\sigma(v_{t_0})_k \neq \f$, so we must have $\sigma(v_{t_0})_k = \e$, and we can choose $t := t_0$. If $\hs_k = \AF$, then either there exists $t \geq t_0$ such that $\sigma(v_{t}) = \e$, or otherwise $\sigma(v_{t'}) = \f$ for all $t' \geq t_0$, in which case we can choose $t := t_0$.
\end{proof}

\subsection{Proofs for Subsection~\ref{ss:compvariants}}

\begin{reptheorem}{t:rootedcompleteness}
For every consistent $\CTLfp$-formula $\phi_0(\p)$, there exists a $\p$-coloured tree such that for some node $s$, $s \forces \phi_0$.
\end{reptheorem}

\begin{proof}
 First notice that if a $\CTLfp$-formula $\phi$ is consistent, then  $\I\wedge \EU(\phi, \top)\wedge \Box \AR(\neg \I, \bot)$ is also consistent.
 Indeed, interpreting $\phi$ in rooted $\CTLf$-algebras,  from $\phi\neq \bot$ we get $I\leq \EU(\phi, \top)$, so that $\I\wedge \EU(\phi, \top)\wedge \Box \AR(\neg \I, \bot)$ is equal to $\I$, and $\I\neq \bot$ is an axiom.
 
 Now, if we apply the above tableau construction to $\I\wedge \EU(\phi, \top)\wedge \Box \AR(\neg \I, \bot)$, we get a tree model where $\phi$ holds somewhere and $\I$ holds only in the root.
\end{proof}

For the binary case, we indicate the elements of the proof that are different from the case treated in the previous subsection.

The Fischer-Ladner closure of a finite set of formulas is so modified:
\begin{definition}\label{d:b-closure}
A set of $\CTLf$ formulas $\Gamma$ is called (Fischer-Ladner) \emph{closed} if the following hold:
\begin{itemize}
\item $\EU(\top,\top,\top) \in \Gamma$.
\item if $\phi \in \Gamma$, then $\phi' \in \Gamma$ for any subformula $\phi'$ of $\phi$.
\item if $\Diamond \phi \in \Gamma$,
then $\X_0\phi\in \Gamma$ and $\X_1\phi \in \Gamma$.
\item if $\EG(\phi,\psi) \in \Gamma$, then $\diam \EU(\psi \wedge \EG(\phi,\psi),\phi) \in \Gamma$.
\item if $\AR(\phi,\psi) \in \Gamma$, then $\Box \AR (\phi,\psi) \in \Gamma$.
\item if $\EU(\phi,\psi,\chi) \in \Gamma$, then $\diam(\chi \wedge \EU(\phi,\psi,\chi)) \in \Gamma$,
\item if $\AF(\phi,\psi,\chi) \in \Gamma$, then
$\Box \AR(\psi \vee \chi,\phi) \in \Gamma$.
\end{itemize}
The \emph{closure} of a set of $\CTLf$ formulas is the smallest closed set containing it.
\end{definition}

Lemma~\ref{l:finiteclosure} still holds; we can repeat Definition~\ref{d:charform} and prove Lemma~\ref{l:charform}
and Lemma~\ref{l:jump} for binary $\CTLf$-algebras. We can also restate Definition~\ref{d:partialtableau}, with the only obvious modification that the partial tableau is now based on a finite \emph{binary} tree $T$.

Since the axioms for $\X_0, \X_1$ and the axiom $\Diamond \phi = \X_0\phi \vee \X_1 \phi$ are 
in Sahlqvist form, by standard modal logic machinery~\cite{BRV2001}, we have that in the dual spaces of binary $\CTLf$-agebras the operators $\X_0, \X_1$ correspond to unary functions
(to be called $f_0, f_1$) whose union is the relation $R_*$ dual to the modal operator $\diam$. With this information, we can modify Definition~\ref{d:onestepunravel}
as follows:

\begin{definition}\label{d:b-onestepunravel}
We define the \emph{(binary) one-step unravelling} of a well-formed 
partial tableau $(T,\alpha,\beta)$. For each leaf $v$ of $T$, add two  children $v0$ and $v1$ of $v$ as follows.
%
%
We again choose an auxiliary ultrafilter $x_v \in \b{A}_*$.
Let 
$$
C^0_v := \{\lambda \ | \ \X_0 \lambda \in \Gamma_0 \cap \alpha(v)\}, \quad 
C^1_v := \{\lambda \ | \ \X_1 \lambda \in \Gamma_0 \cap \alpha(v)\}
$$
If $\sigma_k \neq \a$ for all $1 \leq k \leq \ell(v)$, define $x_v := \alpha(v)$. Otherwise, put 
\[m := \min \{1 \leq k \leq \ell(v) \ | \ \sigma_k = \a\}.\]
We call $m$ the \emph{active index} at $v$.
By well-formedness, we have $\hs(\phi_m,\psi_m,\chip_m) \in \alpha(v)$. Therefore, by Lemma~\ref{l:jump}, pick $x_v \in \b{A}_*$ such that $\hs_m(\phi_m,\psi_m,\chip_m \wedge \neg \kappa(\alpha(v),\rho_m)) \in x_v$ and $x_v \sim_{\rho_m} \alpha(v)$. 
We let $v0$ be $f_0(x_v)$ and $v1$ be $f_1(x_v)$.

For each $\lambda$ such that $\diam\lambda  \in \Gamma_0 \cap \alpha(v)$, 
by the revised Definition~\ref{d:b-closure} of a closed set, we have that there is $i=1,2$ such that $\X_i \lambda\in C^i_v$ and so  $\lambda\in f_i(vi)$: we call 
$vi$ a \emph{$\lambda$-designated successor} of $v$. 
In case $\hs_m = \EU$, notice the following (write $\gamma_v := \kappa(\alpha(v),\rho_m)$).
Since the partial tableau is well-formed and $\sigma_m = \a$, we have $\phi_m \not\in \alpha(v)$. Since $\alpha(v) \sim_{\rho_m} x_v$ and $\phi_m \in \Gamma_0 \subseteq \rho_m$, we have $\phi_m \not\in x_v$, so $\neg \phi_m \in x_v$. Also, $\EU(\phi_m,\psi_m,\chip_m \wedge \neg \gamma_v) \in x_v$ by construction. 
Applying the general fact (Proposition~\ref{p:EUcAFcfixpoint}) that $\EU(p,q,r) \wedge \neg p \leq \diam (r \wedge \EU(p,q,r))$, we obtain 
$\diam (\chip_m \wedge \neg \gamma_v \wedge \EU(\phi_m,\psi_m,\chip_m \wedge \neg \gamma_v)) \in x_v$; thus for $i=0$ or $i=1$, we have that 
$\chip_m \wedge \neg \gamma_v \wedge \EU(\phi_m,\psi_m,\chip_m \wedge \neg \gamma_v) \in \alpha(vi)$. 
Thus we can assume that if $\lambda= \chi_m\wedge \EU(\phi_m, \psi_m, \chi_m)$, the $\lambda$-designated successor $vi$ of $v$ is such that $\chip_m \wedge \neg \gamma_v \wedge \EU(\phi_m,\psi_m,\chip_m \wedge \neg \gamma_v) \in \alpha(vi)$.

The word $\beta(vi)$ ($i=0,1$) is defined as an update of the word $\beta(v)$, obtained by consecutively applying the following steps:
\begin{enumerate}
\item Let $\mathsf{New}(vi) := \{ \theta \in \alpha(w) \cap \Gamma_0 \cap \mathrm{Ev} \ | \ \forall 1 \leq k \leq \ell(v) : \text{ if } \theta_k = \theta, \text{ then } \sigma_k = \e\}$.
For each $\theta = \hs(\phi,\psi,\chi) \in \mathsf{New}(vi)$, add one letter, $(\theta,\a,\rho',\chi)$, to the end of the word, where $\rho' := \bigcup_{k=1}^{\ell(v)} \rho_k$.
\item For each position $k$, put
\[ \chip(w_\lambda)_k = \begin{cases} \chip(v)_k &\mbox{if } k < m, \\
									  \chip(v)_m \wedge \neg \gamma_v & \mbox{if } k = m, \\
									  \chi(v)_k &\mbox{if } k > m.\end{cases} \]
\item For each position $k > m$, add the formula $\hs_m(\phi_m,\psi_m,\chip(v)_m \wedge \neg \gamma_v)$ to the set $\rho_k$.
\item For each position $k$ such that $\phi_k \in \alpha(vi)$, change $\sigma_k$ into $\e$.
\item For each position $k$, if $\theta_k = \EU(\phi_k,\psi_k,\chi_k)$ 
and $vi$ is not a $\lambda$-designated successor of $v$ (for $\lambda=\chi_k\wedge\EU(\phi_k,\psi_k,\chi_k)$),
change $\sigma_k$ into $\e$. If, after this operation, it turns out that $\theta_k\wedge \neg\phi_k\in \alpha(w_{\lambda})$, then $\theta_k$ must be treated as a new eventuality, 
so that (as in item 1 above) $(\theta_k,\a,\rho',\chi_k)$ is appended  to the end of the word (where $\rho' := \bigcup_{s=1}^{\ell(v)} \rho_s
\cup\{\hs_m(\phi_m,\psi_m,\chip(v)_m \wedge \neg \gamma_v)\}$).
\item For each position $k$, if $\hs_k = \AF$, $\psi_k \in \alpha(vi)$, and $\sigma_k = \a$, change $\sigma_k$ into $\f$.
\item For each position $k < m$, if $\hs_k = \AF$, $\sigma_k = \f$, $\phi_k \not\in \alpha(vi)$ and $\psi_k \not\in \alpha(w_\lambda)$, change $\sigma_k$ into $\a$.
\end{enumerate}
\vspace{-5mm}
\end{definition}

Lemma~\ref{l:wellformed} still holds; Definition~\ref{d:modelconstruction} can be restated word for word and Lemmas~\ref{l:truthlemma}~and~\ref{l:noloops} are proved as before. Thus, any consistent formula of $\CTLf$ enriched with $X_0$ and $X_1$ is satisfied by some colouring of the full binary tree. Now the same proof as in Theorem~\ref{t:rootedcompleteness} can be used to prove Theorem~\ref{t:binarycompleteness}.

\section{Proofs for Section~\ref{s:modelcompanions}}

\begin{replemma}{l:justequations}
For any quantifier-free $\mathcal{L}$-formula $\phi(\overline{p})$, there exists an $\mathcal{L}$-term $t_\phi(\overline{p})$ such that $\mathrm{CTL}^f_\mathrm{I} \proves \phi \leftrightarrow (t_\phi = \top)$; similarly, there exists an $\mathcal{L}$-term $t_\phi'(\overline{p})$ such that $\mathrm{CTL}^f_\mathrm{I} \proves \phi \leftrightarrow (t_\phi' \neq \bot)$.
\end{replemma}
\begin{proof}
We first construct the term $t_\phi$ by induction on the complexity of $\phi$, which, we may assume, is built up from equalities of $\mathcal{L}$-terms by consecutively applying the Boolean connectives $\wedge$ and $\neg$ from the first-order language. 

If $\phi$ is an atomic formula, then it has the form $t_1 = t_2$ for $\mathcal{L}$-terms $t_1$ and $t_2$, and we may define $t_\phi := (t_1 \wedge t_2) \vee (\neg t_1 \wedge \neg t_2)$.

If $\phi$ is of the form `$\phi_1 \wedge \phi_2$', we can clearly put $t_\phi := t_{\phi_1} \wedge t_{\phi_2}$, where $t_{\phi_1}$ and $t_{\phi_2}$ are defined by induction. Here we use that, for any elements $a$, $b$ in a Boolean algebra, $a = \top$ and $b = \top$ if, and only if, $a \wedge b = \top$.

For the case of negation, notice first that, for any element $a$ in a fair CTL algebra $\b{A}$, we have that
\begin{equation} \label{eq:elimnegation}
a \neq \top \text{ if, and only if, } \I \leq \EU(\neg a, \top).
\end{equation}
Indeed, one direction follows from the last axiom for $\I$, and the other direction follows from the first axiom for $\I$ and the fact that $\EU(\bot,\top) = \bot$, which easily follows from the fixpoint axiom for $\EU$.

Now, if $\phi$ is of the form `$\neg \psi$', then by induction $\phi$ is $\CTLf$-equivalent to $t_\psi \neq \top$. It therefore suffices by the equivalence in (\ref{eq:elimnegation}) to define $t_\phi := \neg I \vee \EU(\neg t_\psi,\top)$.

Now that we have successfully defined $t_\phi$ for all $\mathcal{L}$-formulas $\phi$, we may put $t'_\phi := I \wedge \neg\EU(\neg t_\phi,\top)$. Then $t'_\phi \neq \bot$ iff $\neg I \vee \EU(\neg t_\phi,\top) \neq \top$ iff $I \not\leq \EU(\neg t_\phi,\top)$, which, by (\ref{eq:elimnegation}), is equivalent to $t_\phi = \top$, and the latter is equivalent to $\phi$.
\end{proof}

\subsection{Proofs for Subsection~\ref{ss:exform}}
\begin{lemma}\label{l:omegaunravel}
Let $(S,\sigma)$ be a $\p$-coloured tree with root $s_0$. Define the function $z : S_\omega \to S$ by $z(\epsilon) := s_0$ and $z((k_1,s_1) \dots (k_n,s_n)) := s_n$. 
Then $z$ is a surjective p-morphism.
\end{lemma}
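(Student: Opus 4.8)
The plan is to check directly the four defining properties of a surjective p-morphism (bounded morphism) of coloured transition systems $z \colon (S_\omega, R_\omega, \sigma_\omega) \to (S, R, \sigma)$: colour preservation, the forth (homomorphism) condition, the back (zig-zag) condition, and surjectivity. All four reduce to unwinding the definitions of $S_\omega$, $R_\omega$ and $\sigma_\omega$, so the argument is essentially bookkeeping; the only point deserving care is the uniform treatment of the empty sequence $\epsilon$ and the correct reading of $R_\omega$ in the base case $n = 0$, where the last node $s_n$ must be read as the root $s_0$.

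For colour preservation I would simply note that $\sigma_\omega((k_1,s_1)\cdots(k_n,s_n)) = \sigma(s_n) = \sigma(z(w))$, handling the base case $\sigma_\omega(\epsilon) = \sigma(s_0) = \sigma(z(\epsilon))$ separately. For the forth condition, if $w\, R_\omega\, w'$ then by definition $w = (k_1,s_1)\cdots(k_n,s_n)$ and $w' = w\,(k_{n+1},s_{n+1})$ with $s_n\, R\, s_{n+1}$ (reading $s_n := s_0$ when $w = \epsilon$); hence $z(w) = s_n$, $z(w') = s_{n+1}$, and $z(w)\, R\, z(w')$, as required. For the back condition, suppose $z(w)\, R\, t$, i.e. $s_n\, R\, t$; then appending any tag, say $w' := w\,(0,t)$, yields an element of $S_\omega$ (the defining path condition holds precisely because $s_n\, R\, t$) with $w\, R_\omega\, w'$ and $z(w') = t$. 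The freedom to choose a fresh tag in $\omega$ makes the back condition immediate, though a single copy of each successor would already suffice for this property alone.

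Surjectivity is the one place where I would use more than the raw definitions of the $\omega$-expansion: given $s \in S$, I would invoke that $S$ is a tree, so there is a (unique) finite path $s_0 = t_0\, R\, t_1\, R \cdots R\, t_m = s$ from the root to $s$; lifting it to $(0,t_1)(0,t_2)\cdots(0,t_m) \in S_\omega$ (or to $\epsilon$ when $m = 0$) produces a preimage of $s$ under $z$. I do not expect a genuine obstacle here, since the content is entirely definitional; the only things to watch are to keep the empty-sequence base case consistent throughout (taking $z(\epsilon) = s_0$ and reading the final coordinate of $\epsilon$ as the root) and to record explicitly that surjectivity, unlike the two morphism conditions, relies on reachability from the root, i.e. on $S$ being a tree rather than an arbitrary transition system.
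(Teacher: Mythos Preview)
Your proposal is correct and follows essentially the same approach as the paper: both verify colour preservation, the forth condition, the back condition (by appending $(0,t)$), and surjectivity (by lifting the root-to-$s$ path in $S$ to $S_\omega$ using tag $0$ throughout). Your treatment is slightly more explicit about the $\epsilon$ base case and about where the tree hypothesis is actually used, but there is no substantive difference.
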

\begin{proof}
By definition of $\sigma_\omega$, we have $v \in \sigma_\omega(p)$ if, and only if, $z(v) \in \sigma(p)$. If $v{R_\omega}v'$ in $S_\omega$, then by definition $z(v){R}z(v')$. If $v \in S_\omega$ has length $n \geq 0$ and $z(v){R}s_{n+1}$, then $v' := v(0,s_{n+1})$ is an element of $S_\omega$ such that $z(v') = s_{n+1}$ and $v{R_\omega}v'$. Finally, $z$ is surjective because for any node $s \in S$, there exists a path from the root of $S$ to $s$, $s_0{R}\dots{R}s_n = s$, so that $v_s := (0,s_1)\dots(0,s_n)$ is an element of $S_\omega$ with $z(v_s) = s$.
\end{proof}

\begin{repproposition}{p:subalgebra}
For any $\p$-coloured tree $(S,\sigma)$, the algebra $\b{P}(S)$ is isomorphic to a subalgebra of $\b{P}(S_\omega)$, via an isomorphism which in particular sends $V_\sigma(p)$ to $V_{\sigma_\omega}(p)$ for each $p$ in $\p$.
\end{repproposition}
\begin{proof}
Let $i : \b{P}(S) \to \b{P}(S_\omega)$ be the function given by $i(a) := z^{-1}(a)$, where $z$ is the surjective p-morphism from Lemma~\ref{l:omegaunravel}. Since $z$ is surjective, $i$ is injective, and it is obviously a homomorphism of Boolean algebras. It is straightforward to check directly that $i$ preserves the operators $\I$, $\diam$, $\EU$ and $\EG$, or, alternatively, one may refer to the general fact that the inverse image map of a p-morphism preserves any operators that are definable in the modal $\mu$-calculus, because modal $\mu$-formulas are bisimulation-invariant. Therefore, the algebra $\b{P}(S)$ is isomorphic to its image under $i$, which is a subalgebra of $\b{P}(S_\omega)$.
\end{proof}

\begin{repproposition}{p:termtoMSO}
For any first-order $\mathcal{L}$-formula $\phi(\p)$, there exists a monadic second order formula $\Phi(\p)$ such that, for any $\p$-coloured tree $(S,\sigma)$, 
\[ \b{P}(S),V_\sigma \models_{\FO} \phi(\p) \iff S, \sigma \models_{\MSO} \Phi(\p).\]
\end{repproposition}
\begin{proof}
Recall that in the proof sketch in the paper, the formula $\Phi(\p)$ has been defined from $\phi$ by replacing each atomic formula $t_1 = t_2$ by the formula $\forall v (\dot{t_1}(\p,v) \leftrightarrow \dot{t_2}(\p,v))$, where $\dot{t_k}$ is the term defined inductively in the proof sketch. It remains to check that this $\Phi(\p)$ satisfies the stated property. The only non-trivial step is that of atomic formulas. For this, the crucial observation is that, for any $\mathcal{L}$-term, $\p$-coloured tree $(S,\sigma)$ and any node $w \in S$, we have
\begin{equation}\label{eq:termeq}
w \in t^{\mathbb{P}(S)} \text{ if, and only if, } S, \sigma[v \mapsto w] \models_{\MSO} \dot{t},
\end{equation}
where $\sigma[v \mapsto w]$ is the extension of $\sigma$ by making the first-order variable $v$ true in the node $w$.
The equivalence (\ref{eq:termeq}) is proved by an induction on the complexity of the term $t$, using the definition of the operations on the complex algebra $\b{P}(S)$ and the definition of $\dot{t}$.
It follows immediately from (\ref{eq:termeq}) that indeed
\[ \b{P}(S),V_\sigma \models_{\FO} t_1 = t_2 \iff S,\sigma \models_{\MSO} \forall v (\dot{t_1}(\p,v) \leftrightarrow \dot{t_2}(\p,v)),\]
as required.
\end{proof}

\begin{repproposition}{p:auttoterm}
For any non-deterministic modal automaton $\mathcal{A}$ over $\p$ with set of states $\q$, there exists an $\mathcal{L}$-term $\acc_{\mathcal{A}}(\p,\q)$ such that for any $\p$-coloured tree $(S,\sigma)$, we have
\[ \mathcal{A} \text{ accepts } (S_\omega,\sigma_\omega) \iff \b{P}(S_\omega),V_{\sigma_\omega} \models \exists \q \; \acc_{\mathcal{A}}(\p,\q) = \top. \]
\end{repproposition}
\begin{proof}
Let $\mathcal{A}$ be a non-deterministic modal automaton over $\p$. 
\begin{claimfirst}\label{c:acceptance}
For any $\p$-coloured tree $(S,\sigma)$, 
there is a bijection between successful runs $r$ of $\mathcal{A}$ on $(S_\omega,\sigma_\omega)$ and valuations $V_r : \q \to \mathcal{P}(S_\omega)$ that satisfy the following three properties:
\begin{enumerate}
\item (Initial) $\epsilon \in V_r(q_0)$;
\item (Transition) for all $v \in S_\omega$, there is a unique $q \in \q$ such that $v \in V_r(q)$, and moreover, for this $q$, the set $\{q' \ | \ v' \in V_r(q') \text{ for some $R$-successor } v' \text{ of } v\}$ is in $\delta(q,\sigma_\omega(v))$;
\item (Success) for all odd $n \in \mathrm{range}(\Omega)$ and for any infinite path $(v_t)_{t\in\omega}$ in the tree such that $v_t \in \bigcup_{\Omega(q) = n} V_r(q)$ for infinitely many $t$, there exists $q' \in \q$ such that $\Omega(q') < n$ and $v_t \in V_r(q)$ for some $t$.
\end{enumerate}
\end{claimfirst}
\begin{pfclaim}~\ref{c:acceptance}. 
The claimed bijection is a restriction of the bijection between $\q$-colourings $r : S_\omega \to \mathcal{P}(\q)$ and valuations $\q \to \mathcal{P}(S_\omega)$. Indeed, for any function $r : S_\omega \to \q$, define $V_r(q) := r^{-1}(q)$ for each $q \in \q$. It is straight-forward to check that $V_r$ verifies conditions (1) and (2) in the Claim if, and only if, $r$ verifies conditions (1) and (2) in the definition of a successful run (Def.~\ref{d:automaton}). 

Regarding condition (3), suppose first that $r$ satisfies (3) in Definition~\ref{d:automaton}. If $(v_t)_{t\in\omega}$ is an infinite path, $n$ is odd and $v_t \in \bigcup_{\Omega(q) = n} V_r(q)$ for infinitely many $t$, then by the pigeon-hole principle there is some $q$ with $\Omega(q) = n$ and $v_t \in V_r(q)$ for infinitely many $t$. Denote by $\rho = w_0,\dots,w_m = v_0$ the unique path from the root $\rho$ of $S_\omega$ to $v_0$, and extend this to an infinite path by defining $w_{m+t} := v_t$. Since $r$ satisfies (3) in Definition~\ref{d:automaton}, there must exist a state $q$ with $\Omega(q) < n$ and $w_t \in V_r(q)$ for infinitely many $t$. In particular, choosing a $t' \geq m$ with $w_{t'} \in V_r(q)$, we see that $v_{t'-m} \in V_r(q)$. Thus, $V_r$ satisfies (3) in the Claim. Conversely, it is clear that if $V_r$ satisfies (3) in the Claim, then $r$ must satisfy (3) in Definition~\ref{d:automaton}.
\end{pfclaim}
Recall the terms $\acc_1,\acc_2,\acc_3$ defined in the proof sketch in the paper. Note that, for $j = 1,2,3$, we have $\acc_j(\p,\q) = \top$ under a valuation $V_{\sigma_\omega} \cup V_r : \p \cup \q \to \mathcal{P}(S_\omega)$ if, and only if, condition ($j$) in Claim~\ref{c:acceptance} holds. Therefore, putting $\acc_{\mathcal{A}}(\p,\q) := \acc_1 \wedge \acc_2 \wedge \acc_3$ gives the required $\mathcal{L}$-term.
\end{proof}

\subsection{Proofs for Subsection~\ref{ss:Tstar}}

\begin{proposition}\label{p:concludeequiv}
For all $j = (t,\p,\x) \in J$ and for any $\p$-coloured tree $(S,\sigma)$, we have
\[ \b{P}(S_\omega), V_{\sigma_\omega} \models \phi_j \leftrightarrow \psi_j.\]
\end{proposition}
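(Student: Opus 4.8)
The plan is to prove the biconditional by composing, in order, the three translation results underlying Step~1--3 of the construction, threading through the intermediate MSO formula $\Phi_j$ and the automaton $\mathcal{A}_{\Phi_j}$. Recall that $\phi_j(\p) = \forall \x\, t(\p,\x) = \top$, that $\Phi_j(\p)$ is the MSO formula supplied by Proposition~\ref{p:termtoMSO} for $\phi_j$, and that by definition $\psi_j(\p) = \exists \q\,\acc_{\mathcal{A}_{\Phi_j}}(\p,\q) = \top$, with $\mathcal{A}_{\Phi_j}$ the non-deterministic modal automaton supplied by Proposition~\ref{p:MSOtoaut} for $\Phi_j$. Since both $\phi_j$ and $\psi_j$ are first-order $\mathcal{L}$-formulas, it suffices to establish a chain of equivalences whose two ends are $\b{P}(S_\omega), V_{\sigma_\omega} \models_{\FO} \phi_j$ and $\b{P}(S_\omega), V_{\sigma_\omega} \models_{\FO} \psi_j$.

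First I would apply Proposition~\ref{p:termtoMSO} \emph{not} to $(S,\sigma)$ but to its $\omega$-expansion $(S_\omega,\sigma_\omega)$, which is itself a $\p$-coloured tree; this yields
\[ \b{P}(S_\omega), V_{\sigma_\omega} \models_{\FO} \phi_j \iff S_\omega, \sigma_\omega \models_{\MSO} \Phi_j. \]
Next I would apply Proposition~\ref{p:MSOtoaut} to the base tree $(S,\sigma)$, whose statement already has the $\omega$-expansion built in, giving
\[ S_\omega, \sigma_\omega \models_{\MSO} \Phi_j \iff \mathcal{A}_{\Phi_j} \text{ accepts } (S_\omega, \sigma_\omega). \]
Finally I would apply Proposition~\ref{p:auttoterm} to $(S,\sigma)$ with $\mathcal{A} := \mathcal{A}_{\Phi_j}$, obtaining
\[ \mathcal{A}_{\Phi_j} \text{ accepts } (S_\omega, \sigma_\omega) \iff \b{P}(S_\omega), V_{\sigma_\omega} \models \exists \q\, \acc_{\mathcal{A}_{\Phi_j}}(\p,\q) = \top. \]
The right-hand side of the last line is precisely $\b{P}(S_\omega), V_{\sigma_\omega} \models \psi_j$, so concatenating the three biconditionals delivers $\b{P}(S_\omega), V_{\sigma_\omega} \models \phi_j \leftrightarrow \psi_j$.

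Since every step is a direct invocation of a previously established proposition, the proof is essentially bookkeeping, and the only real obstacle is ensuring that each proposition is invoked on the correct tree so that the intermediate clauses match up verbatim. The delicate point is that Proposition~\ref{p:termtoMSO} is used on the expanded tree $(S_\omega,\sigma_\omega)$, so that its left-hand side literally mentions $\b{P}(S_\omega)$ and $V_{\sigma_\omega}$ — exactly the algebra and valuation appearing in the target statement — whereas Propositions~\ref{p:MSOtoaut} and~\ref{p:auttoterm} are used on the base tree $(S,\sigma)$, since their formulations already refer to the $\omega$-expansion. The shared MSO formula $\Phi_j$ makes the right-hand side of the first line coincide with the left-hand side of the second, and the shared automaton $\mathcal{A}_{\Phi_j}$ makes the right-hand side of the second coincide with the left-hand side of the third; hence no additional semantic argument is required.
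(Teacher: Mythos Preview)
Your proof is correct and matches the paper's own argument essentially line for line: the paper also chains Propositions~\ref{p:termtoMSO}, \ref{p:MSOtoaut}, and~\ref{p:auttoterm} in that order to obtain the three biconditionals. Your observation about applying Proposition~\ref{p:termtoMSO} to $(S_\omega,\sigma_\omega)$ while applying the other two to $(S,\sigma)$ is exactly the right bookkeeping.
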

\begin{proof}
We have
\begin{align*}
\b{P}(S_\omega), V_{\sigma_\omega} \models \phi_j &\iff S_\omega, \sigma_\omega \models \Phi_j &&(\text{Prop.~\ref{p:termtoMSO}}) \\
&\iff \mathcal{A}_{\Phi_j} \text{ accepts } (S_\omega,\sigma_\omega) &&(\text{Prop.~\ref{p:MSOtoaut}}) \\
&\iff \b{P}(S_\omega), V_{\sigma_\omega} \models \psi_j &&(\text{Prop.~\ref{p:auttoterm}}).
\end{align*}
\end{proof}

\begin{reptheorem}{t:main1}
$(\CTLfp)^*$ is the model companion of $\CTLfp$.
\end{reptheorem}
\begin{proof}
We prove that $(\CTLfp)^*$ is a model-complete co-theory of $\CTLfp$.\\

1. $(\CTLfp)^*$ {\bf is model-complete.}

It suffices to prove, for each $j = (t,\p,\x) \in J$, that in all rooted $\CTLf$-algebras,
\begin{equation} \label{eq:converseinT} 
\forall \p ( \psi_j \rightarrow \phi_j ).
\end{equation}
Indeed, given this fact, from $\CTLfp \proves (\ref{eq:converseinT})$, it will follow from the definition of $(\CTLfp)^*$ that every universal formula is equivalent over $(\CTLfp)^*$ to an existential one, so that $(\CTLfp)^*$ is model complete.

We first prove that (\ref{eq:converseinT}) is true in every rooted $\CTLf$-algebra of the form $\b{P}(S)$, where $S$ is a tree. Let $(S,\sigma)$ be any $\p$-coloured tree, and suppose that $\b{P}(S),V_\sigma \models \psi_j(\p)$. Since $(\b{P}(S),V_\sigma)$ embeds into $(\b{P}(S_\omega),V_{\sigma_\omega})$ by Proposition~\ref{p:subalgebra}, and $\psi_j$ is existential, we also have $\b{P}(S_\omega),V_{\sigma_\omega} \models \psi_j(\p)$. By Proposition~\ref{p:concludeequiv}, we obtain $\b{P}(S_\omega),V_{\sigma_\omega} \models \phi_j(\p)$. Since $\phi_j$ is universal and again $(\b{P}(S),V_\sigma)$ is a subalgebra of $(\b{P}(S_\omega),V_{\sigma_\omega})$, we conclude that $\b{P}(S),V_{\sigma} \models \phi_j(\p)$.


Note that by first-order logic the sentence (\ref{eq:converseinT}) is equivalent to the universal sentence 
\begin{equation} \label{eq:converseinTuniversal}
\forall \p, \q, \x ( \mathsf{acc}_{\mathcal{A}_{t,\x}}(\p,\q) = \top \rightarrow t(\p,\x) = \top).
\end{equation}
By Lemma~\ref{l:justequations}, pick a term $t'(\p,\q,\x)$ such that
\begin{equation}\label{eq:tprimedef}
T \proves (t'(\p,\q,\x) = \top) \leftrightarrow (\mathsf{acc}_{\mathcal{A}_{t,\x}}(\p,\q) = \top \rightarrow t(\p,\x) = \top).
\end{equation}

Since we established above that (\ref{eq:converseinT}) holds in every rooted $\CTLf$-algebra of the form $\b{P}(S)$, where $S$ is a tree, the equation $t'(\p,\q,\x) = \top$ is also valid in every such rooted $\CTLf$-algebra. Therefore, by Theorem~\ref{t:completeness}, the equation $t'(\p,\q,\x) = \top$ is valid in all rooted $\CTLf$-algebras. Hence, (\ref{eq:converseinTuniversal}) holds in all rooted $\CTLf$-algebras, and thus also (\ref{eq:converseinT}), as required.\\

2. $(\CTLfp)^*$ {\bf is a co-theory of }$\CTLfp${\bf{.}}

By \cite[Lem.~3.5.7]{CK}, every $\CTLfp$-algebra embeds into an existentially closed $\CTLfp$-algebra. Therefore, to prove that $(\CTLfp)^*$ is a co-theory of $\CTLfp$, it suffices to prove that every existentially closed $\CTLfp$-algebra is a model of $(\CTLfp)^*$. 

Let $\b{A}$ be an existentially closed $\CTLfp$-algebra and let $j = (t,\p,\x) \in J$ and $\a \in A^n$ be arbitrary. By Lemma~\ref{l:extendalgebra} (proved below), there is an extension of $\b{A}$ where $\phi_j(\oa) \to \psi_j(\oa)$ holds. Note that $\phi_j(\oa) \to \psi_j(\oa)$ is (by first-order logic) an existential sentence in the language $\mathcal{L}_A$. Thus, since $\b{A}$ is existentially closed, $\phi_j(\oa) \to \psi_j(\oa)$ holds in $\b{A}$.
\end{proof}
Note that, in fact, the above proof also shows immediately that the models of $(\CTLfp)^*$ are exactly the existentially (= algebraically) closed models for $\CTLfp$.

\begin{replemma}{l:extendalgebra}
Let $j = (t,\p,\x) \in J$, with $\p = p_1,\dots,p_n$. For any rooted $\CTLf$-algebra $\b{A}$ and $\oa \in A^n$, there is a rooted $\CTLf$-algebra $\b{A}'$ which contains $\b{A}$ as a subalgebra such that $\b A' \models \phi_j(\oa) \to \psi_j(\oa)$.
\end{replemma}

\begin{proof}
Let $\b{A}$ be a rooted $\CTLf$-algebra and $\oa \in A^n$. Consider the language $\mathcal{L}_A := \mathcal{L} \cup \{c_a \ | \ a \in A\}$, where each $c_a$ is a fresh constant symbol. Note that it suffices to prove that the $\mathcal{L}_A$-theory
\[ T' := \CTLfp \cup \{t(\oa,\ob) \neq \bot \ : \ \b{A} \models t(\oa,\ob) \neq \bot\} \cup \{\phi_j(\oa) \to \psi_j(\oa)\}\]
is consistent. Indeed, any model $\b A'$ of the theory $T'$ will contain a subalgebra isomorphic to $\b A$, since any quantifier-free $\mathcal{L}_A$-formula is equivalent to an $\mathcal{L}_A$-formula of the form $t(\oa,\ob) \neq \bot$ by Lemma~\ref{l:justequations}.

In order to prove that $T'$ is consistent, by the compactness theorem of first-order logic, it suffices to prove that every finite subset $U$ of $T'$ is consistent. The crucial step is the following claim.

\begin{claimfirst}\label{c:consistentsubset}
For every $\mathcal{L}$-term $t(\p,\y)$ and tuple $\ob \in A^{\y}$ such that $\b{A} \models t(\oa,\ob) \neq \bot$, the $\mathcal{L}_A$-theory $T'' = \CTLfp \cup \{t(\oa,\ob) \neq \bot\} \cup \{\phi_j(\oa) \to \psi_j(\oa)\}$ is consistent. 
\end{claimfirst}
\begin{pfclaim}~\ref{c:consistentsubset}. 
Since $t(\oa,\ob) \neq \bot$ holds in the rooted $\CTLf$-algebra $\b{A}$, the Completeness Theorem~\ref{t:completeness} gives that there exists a tree model $(S,\sigma)$ of $t(\p,\y)$. Since $(S_\omega,\sigma_\omega)$ is bisimilar to $(S,\sigma)$ by Lemma~\ref{l:omegaunravel}, $(S_\omega,\sigma_\omega)$ is also a model of $t(\p,\y)$, i.e., $\b{P}(S_\omega), V_{\sigma_\omega} \models t(\p,\y) \neq \bot$. Moreover, by Proposition~\ref{p:concludeequiv}, $\b{P}(S_\omega), V_{\sigma_\omega} \models \phi_j(\p) \to \psi_j(\p)$, so that $(\b{P}(S_\omega), V_{\sigma_\omega})$ is a model of the theory $T''$.
\end{pfclaim}
Now, given an arbitrary finite subset $U$ of $T'$, list the finitely many terms $t_1(\oa,\ob_1),\dots,t_m(\oa,\ob_m)$ occurring in $U$. Put $\ob := \bigcup_{i=1}^m \ob_i$ and $t(\oa,\ob) := \bigwedge_{i=1}^m t_i$. By Claim~\ref{c:consistentsubset}, pick a model $\b{A}$ of $T \cup \{t(\oa,\ob) \neq \bot\} \cup \{\phi_j(\oa) \to \psi_j(\oa)\}$. Then in particular $\b{A} \models t_i(\oa,\ob_i) \neq \bot$ for each $i$, since $t(\oa,\ob) \leq t_i(\oa,\ob_i)$. Hence, $\b{A}$ is a model of $U$.
\end{proof}

\begin{reprema}{r:nomodels}
If $\b{A}$ is a model of $(\CTLfp)^*$, then the only atom of $\b{A}$ is $\I$.
\end{reprema}
\begin{proof}
Let $\phi(p,x)$ be the formula  $(x\leq p) \to [(x=\bot) \vee (x=p)]$. By Lemma~\ref{l:justequations}, convert $\phi$ into an equation $t(p,x)=\top$. Let $\psi(p)$ be the existential formula corresponding to $\forall x\,(t(p,x) = \top)$, as in~\eqref{eq:psij}. Notice that $\psi$ is equivalent to $p \leq I$, using Proposition~\ref{p:concludeequiv} and the fact that $\phi$ is equivalent to $p \leq I$ are equivalent on $\omega$-unravelled trees: the only subset $p$ of a tree $S$ which remains a singleton in the  unravelling $S_\omega$ is the singleton $\{s_0\}$, where $s_0$ is the root of $S$.
Since one of the axioms of $(\CTLfp)^*$ says that $\forall p [(\forall x\,(t(p,x) = \top)) \to \psi]$, this means that in the models of $(\CTLfp)^*$ the only atom is $\I$. 
\end{proof}

\subsection{Proofs for Subsection~\ref{ss:modelcomp-binary}}
\begin{repproposition}{p:b-auttoterm}
For any parity tree automaton $\mathcal{A}=(Q, q_I, \Delta, \Omega)$ over $\Sigma:=\mathcal{P}(\p)$ with set of states $\q$, there exists an $\mathcal{L}_{0,1}$-term $\acc_{\mathcal{A}}(\p,\q)$ such that for any $\p$-colouring $\sigma: 2^*\longrightarrow \mathcal{P}(\p)$, we have
\[ \mathcal{A} \text{ accepts } (2^*,\sigma) \iff \b{P}(2^*),V_{\sigma} \models \exists \q \; \acc_{\mathcal{A}}(\p,\q) = \top. \]
\end{repproposition}

\begin{proof}
As in the proof of Proposition~\ref{p:auttoterm}, one encodes the parity acceptance condition into a $\CTLfpb$-formula. For a triple $\theta=(\alpha,q_0, q_1)$ (with $\alpha \in \mathcal{P}(\p), q_0, q_1\in Q$), write $\bullet \theta$
for 
\[ \X_0(q_0) \wedge \X_1(q_1)\wedge \bigwedge_{p \in \alpha} p \wedge \bigwedge_{p\not\in\alpha}\neg p  ~. \]
 The required $\mathcal{L}_{0,1}$-term $\acc_{\mathcal{A}}(\p,\q)$ is taken to be $\acc_1 \wedge \acc_2 \wedge \acc_3$, where  

\begin{align*}
\acc_1(\p,\q) &:= \neg \I \vee q_I,\\
\acc_2(\p,\q) &:= \bigvee_{q \in \q}\left( 
  \begin{aligned}
    & q \wedge \bigwedge_{q' \in \q \setminus\{q\}} \neg q' \wedge 
    \\ &
    \bigvee\left\{  \bullet \theta \ | \   (q, \theta) \in \Delta\right\}
  \end{aligned}
\right),\\
\acc_3(\p,\q) &:= \bigwedge \left\{\AF\left(\bigvee_{\Omega(q')<n}q',\bigwedge_{\Omega(q)=n}\neg q\right) \right\},
\end{align*}
where the last conjunction is taken over the set of the odd numbers
 $n$ that belongs to the range of $\Omega$.
\end{proof}

}

\end{document}